    \theoremstyle{plain}
    \newtheorem{thm}{Theorem}[section]
    \newtheorem{lem}[thm]{Lemma}
    \newtheorem{prop}[thm]{Proposition}
    \theoremstyle{definition}
    \newtheorem{defn}{Definition}[section]
    \newtheorem{exmp}{Example}[section]
    \theoremstyle{remark}
    \newtheorem{rem}{Remark}[section]
\definecolor{nik}{RGB}{200,0,0}
\definecolor{ray}{RGB}{0,0,200}
\newcommand{\raysays}[1]{\marginpar{\color{ray}#1\color{black}}}
\begin{document}

\centerline{\textbf{\Large Enumerating families of clusters in type $\tilde{\mathbb{A}}$}}
\medskip
\centerline{Ray Maresca and Nikolas Proskura}

\begin{abstract}
In this paper, we will place clusters in type $\tilde{\mathbb{A}}$ (equivalently triangluations of an annulus) into infinite families parametrized by winding numbers of certain arcs in the corresponding triangulation. We will count how many such families there are and provide an algebraic description of these families in terms of the corresponding cluster category.
\end{abstract}

\section{Introduction}
Since it's introduction by Fomin and Zelevinsky in the early 2000's, cluster theory has become a vibrant area of research in modern algebra. A large class of cluster algebras are those that arise from a surface. As shown by Fomin, Shaprio, and Thurston in \cite{clustersareinbijectionwithtriangulations}, cluster algebras of type $\tilde{\mathbb{A}}$ arise from annuli with a distinct set of marked points on the boundary components. The `clusters' correspond to triangulations of the annulus from which the cluster algebra arises.

It is well-known that there are infinitely many such triangulations in type $\tilde{\mathbb{A}}$. In \cite{IgusaMaresca}, Igusa and the first author placed the triangulations, hence clusters, into finitely many infinite families by considering triangulations up to $2\pi$ Dehn twists of the inner boundary component of the annulus. Moreover, Igusa and the first author counted how many such families there are for one particular orientation; namely, when the annulus has one marked point on the outer boundary component and $n$ marked points on the inner component. In Section \ref{sec: counting method}, we prove our first main result, Theorem \ref{finalresult}, in which we count the number of families of triangulations, hence clusters, when the annulus has $n$ marked points on the outer circle and $m$ marked points on the inner circle. In Section \ref{sec: algebraic description of families}, we will prove our second main result, Theorem \ref{thm: families in cluster category}, which provides an algebraic description of the families in terms of the corresponding cluster category. Finally in Section \ref{sec: examples} we will provide a table containing the number of families of clusters for certain annuli, and give an example of all the families for one particular orientation.

\section{Preliminaries}

Let $\Bbbk = \overline{\Bbbk}$ be an algebraically closed field. Let $Q=(Q_0,Q_1,s,t)$ denote a quiver with vertex set $Q_0$, arrow set $Q_1$ and functions $s,t:Q_1 \rightarrow Q_0$ that assign to each arrow a starting and terminal point respectively. A \textbf{path of length} $m \geq1$ in $Q$ is a finite sequence of arrows $\alpha_1\alpha_2\dots\alpha_m$ where $t(\alpha_j) = s(\alpha_{j+1})$ for all $1 \leq j \leq m-1$. We denote the path algebra of $Q$ by $\Bbbk Q$. A \textbf{monomial relation} in $Q$ is given by a path of the form $\alpha_1\alpha_2\dots\alpha_m$ where $m\geq 2$. A two sided ideal of $\Bbbk Q$ is called \textbf{admissible} if $R^m_Q \subset I \subset R^2_Q$ for some $m \geq 2$ where $R_Q$ is the arrow ideal in $\Bbbk Q$.  The path algebra $\Bbbk Q$ is a hereditary algebra (submodules of projective modules are projective) when $Q$ is acyclic. Finitely generated $\Bbbk Q$-modules are finite dimensional representations of $Q$. These form the category $mod$-$\Bbbk Q$. An object $V$ in this category is \textbf{partial tilting} if Ext$_{\Bbbk Q}^1 (V,V) = 0$ and it is $\textbf{tilting}$ if in addition the number of indecomposable summands of $V$ equals the number of vertices in the quiver.

\begin{defn}
 A \textbf{cluster algebra} $\mathscr{A}_Q$ is a subalgebra $\mathscr{A}\subset \mathbb{Q}(x_1,x_2,\dots, x_n)$ generated by the cluster variables given by mutating a seed $(Q,x_{*})$ where
\begin{itemize}
\item $Q$ is a quiver with no loops or two cycles. 
\item $x_{*} = (x_1,x_2,\dots,x_n)$ is a transcendence basis for $\mathbb{Q}(x_1,x_2,\dots, x_n) = \{{f(x)\over g(x)} : f,g \in \mathbb{Q}(x_1,x_2,\dots, x_n) \}$. 
\item A \textbf{cluster} is a collection of $n$ cluster variables attained by mutations.
\end{itemize} 
\end{defn}

In order for this definition to be complete, we must define mutation which consists of two parts, mutation of the quiver and the seed. We begin by defining mutation of a quiver $Q$ with a running example. For the following definition, let $Q'$ be the quiver 

\begin{center}
\begin{tabular}{c}
\xymatrix{                             & 2 \ar[dr]^{\beta}   &                             \\
				1 \ar[ur]^{\alpha}     &                  & 3 \ar@<.5ex>[ll] \ar@<-.5ex>[ll]       }
\end{tabular}
\end{center}

\begin{defn} \label{def: quiver mutation}
We define the \textbf{mutation of $Q$ at vertex $k$}, denoted by $\mu_kQ$, as follows.

\begin{enumerate}
\item We first compose any length two paths through $k$ by introducing a new arrow. In the running example, let $k=2$.

\begin{center}
\begin{tabular}{c}
\xymatrix{                             & 2 \ar[dr]^{\beta}   &                             \\
				                                            1 \ar[ur]^{\alpha} \ar@<.5ex>[rr]^{\alpha\beta}    &                  & 3 \ar@<.5ex>[ll] \ar@<1.5ex>[ll]        }
\end{tabular}
\end{center}
\item Reverse all arrows at $k$:
\begin{center}
\begin{tabular}{c}
\xymatrix{                             & 2 \ar[dl]_{\alpha^*}   &                             \\
				                                            1  \ar@<.5ex>[rr]^{\alpha\beta}    &                  & 3 \ar@<.5ex>[ll] \ar@<1.5ex>[ll] \ar[ul]_{\beta^*}       }
\end{tabular}
\end{center}

\item Eliminate all two cycles:

\begin{center}
\begin{tabular}{c}
\xymatrix{                             & 2 \ar[dl]_{\alpha^*}   &                             \\
				                                            1    &                  & 3 \ar@<.5ex>[ll] \ar[ul]_{\beta^*}       }
\end{tabular}
\end{center}

\end{enumerate}

After all three steps, the resulting quiver is $\mu_kQ$, or in this example $\mu_2Q'$. 
\end{defn}

\begin{defn} \label{defn: exchange matrix}
Let $Q$ be a quiver. The \textbf{exchange matrix} of $Q$ is the matrix $B = [b_{ij}]$ where $b_{ij} =$ (the number of arrows $i\rightarrow j) \, -$ (the number of arrows $j \rightarrow i$).
\end{defn} 

\begin{exmp}\label{exmp: exchange matrix}
For the running example in Definition \ref{def: quiver mutation}, the exchange matrix is $$\left[ \vcenter{\xymatrixrowsep{10pt}\xymatrixcolsep{10pt}\xymatrix{0&1&-1\\-1&0&2\\1&-2&0}} \right]$$
\end{exmp}

\begin{defn}\label{defn: mutation of seed}
We define the \textbf{mutation of $x_*$ at vertex $k$}, denoted by $\mu_k(x_*)$, as $\mu_k(x_*) = (x_1, x_2, \dots , x'_k, x_{k+1}, \dots, x_n)$ where $$x'_k = {\displaystyle \prod_{i\rightarrow k} x_i^{b_{ik}} + \displaystyle \prod_{k \rightarrow j} x_j^{b_{kj}} \over x_k}.$$
The notation $\displaystyle \prod_{i\rightarrow k}$ means we take the product over all vertices $i$ such that there is an arrow from $i$ to $k$. We adopt the standard convention that the empty product equals 1.
\end{defn}

Clusters have been categorified, by which we mean that they have been realized as objects in a category. Let $\mathscr{D} := \mathscr{D}^b(mod\text{-}\Bbbk Q)$ denote the bounded derived category of $mod$-$\Bbbk Q$. It is known that $\mathscr{D}$ is a triangulated category with shift functor $\Sigma:\mathscr{D} \rightarrow \mathscr{D}$ and almost split triangles induced by almost split sequences in $mod$-$\Bbbk Q$. Let $\tau: \mathscr{D} \rightarrow \mathscr{D}$ denote the equivalence which induces the Auslander--Reiten (AR) translation so that $\tau C = A$ if we have a triangle of the form $A\rightarrow B \rightarrow C \rightarrow \Sigma A$. The \textbf{cluster category}, defined by Buan, Marsh, Reiten, Reineke, and Todorov in \cite{BMRRT} and denoted by $\mathcal{C}_Q$, is the orbit category $\mathscr{D}/F$ where $F$ is the auto-equivalence given by $F = \Sigma \tau^{-1}:\mathscr{D} \rightarrow \mathscr{D}$. An object $T \in$  Ob$(\mathcal{C}_Q)$ is called a \textbf{cluster} if it is a tilting object in $\mathcal{C}_Q$; that is, Ext$_{\mathcal{C}_Q}^1 (T,T) = 0$ and the number of indecomposable summands of $T$ is $|Q_0|$. For more on tilting theory and general representations of quivers see \cite{BlueBook} and \cite{SchifflerQuiverReps}. For more on cluster categories see \cite{reiten2010cluster}.

 Our focus in this paper is on clusters of type $\tilde{\mathbb{A}}$. To make a graph of type $\tilde{\mathbb{A}}_{p-1}$ a quiver, we define an \textbf{orientation vector} $\bm{\varepsilon} = (\varepsilon_1, \dots , \varepsilon_{p}) \in \{-,+\}^{n+1}$. Then \textbf{the quiver of type $\tilde{\mathbb{A}}_{p-1}$} with this orientation, denoted by $Q^{\bm{\varepsilon}} = (Q_0^{\bm{\varepsilon}},Q_1^{\bm{\varepsilon}},s,t)$, is the one such that $Q_0^{\bm{\varepsilon}} = \{1,2,\dots,p-1,p\}$ and for each $\alpha_i \in Q_1^{\bm{\varepsilon}}$ with $1\leq i \leq p$, we define $\alpha_i \in Q^{\varepsilon}_1$ as

\vspace{-.7cm}

\begin{center}
\begin{multicols}{2}

 \begin{displaymath}
   \alpha_i = \left\{
     \begin{array}{lr}
       i \rightarrow i+1 & : \varepsilon_i = +\\
       i \leftarrow i+1 & :  \varepsilon_i = -
     \end{array}
   \right.
\end{displaymath}

\columnbreak

 \begin{displaymath}
   \alpha_p = \left\{
     \begin{array}{lr}
       p \rightarrow 1 & : \varepsilon_0 = +\\
       p \leftarrow 1 & :  \varepsilon_0 = -
     \end{array}
   \right.
\end{displaymath}

\end{multicols}
\end{center}

Note that any quiver of type $\tilde{\mathbb{A}}_{p-1}$ is given in this way. Moreover, note that when considering our vertex set modulo $|Q_0| = p$, the convention for $\alpha_p$ is consistent with that of $\alpha_i$. So long as $\varepsilon_i \neq \varepsilon_j$ for some $i$ and $j$, these quivers are both hereditary and tame. We say a quiver $Q$ of type $\tilde{\mathbb{A}}_{p-1}$ has \textbf{straight orientation} if $\varepsilon_i = +$ for all $i \in \{1, 2, \dots , p-1\}$ and $\varepsilon_0 = -$. Moreover, we assume the vertices are labeled from left to right in natural numerical order:
\[
%\xymatrixrowsep{10pt}\xymatrixcolsep{10pt}
\xymatrix{%begin xy matrix
\tilde{\mathbb{A}}_{p-1}:&1 \ar[r]\ar@/^1.5pc/[rrrr]&2\ar[r]&\quad \cdots\quad  \ar[r]& p-1\ar[r] & p
	}%end xy matrix
\]
By \textbf{tame}, we mean there are infinitely many indecomposable $\Bbbk Q$-modules and for all $n\in\mathbb{N}$, all but finitely many isomorphism classes of $n$-dimensional indecomposables occur in a finite number of one-parameter families. It is known that the module category, hence the Auslander--Reiten quiver, denoted by $\Gamma_{\Bbbk Q}$, of a tame hereditary algebra can be partitioned into three sections: the preprojective, $\mathcal{P}$, regular, $\mathcal{R}$, and preinjective, $\mathcal{I}$ components. For $\mathbb{\tilde{A}}_{p-1}$ quivers, the regular component consists of the left, right and homogeneous tubes. We will denote by $P(i), I(i),$ and $S(i)$ the indecomposable projective, injective and simple representation at vertex $i$ respectively. Moreover, the path algebras of quivers of type $\tilde{\mathbb{A}}$ have an additional structure that simplifies the aforementioned tripartite classification of $\Gamma_{\Bbbk Q}$.

\begin{defn}
For an admissible ideal $I$, the algebra $B = \Bbbk Q/ I$ is a \textbf{string algebra} if
\begin{enumerate}
\item At each vertex of $Q$, there are at most two incoming arrows and at most two outgoing arrows.
\item For each arrow $\beta$ there is at most one arrow $\alpha$ and at most one arrow $\gamma$ such that $\alpha\beta \notin I$ and $\beta\gamma \notin I$.  \\

If moreover, we have the following two conditions, the string algebra $B$ is called \textbf{gentle}.
\item The ideal $I$ is generated by a set of monomials of length two.
\item For every arrow $\alpha$, there is at most one arrow $\beta$ and one arrow $\gamma$ such that $0 \neq \alpha\beta \in I$ and $0 \neq \gamma\alpha \notin I$.
\end{enumerate}
\end{defn}

It is well known that the indecomposable modules over string algebras are either string or band modules \cite{BR}. For $\Bbbk Q/I$ a sting algebra, to define string modules, we first define for $\alpha\in Q_1$ a \textbf{formal inverse} $\alpha^{-1}$, such that $s(\alpha^{-1}) = t(\alpha)$ and $t(\alpha^{-1}) = s(\alpha)$. Let $Q_1^{-1}$ denote the set of formal inverses of arrows in $Q_1$. We call arrows in $Q_1$ \textbf{direct} arrows and those in $Q_1^{-1}$ \textbf{inverse} arrows. We define a \textbf{walk} as a sequence $\omega = \omega_0\dots \omega_r$ such that for all $i\in\{0,1,\dots,r-1\}$, we have $t(\omega_i) = s(\omega_{i+1})$ where $\omega_i \in Q_1 \cup Q_1^{-1}$. A \textbf{string} is a walk $\omega$ with no sub-walk $\alpha\alpha^{-1}$ or $\alpha^{-1}\alpha$. A \textbf{band} $\beta = \beta_1\dots \beta_n$ is a cyclic string, that is, $t(\beta_n) = s(\beta_1)$. We define the \textbf{start or beginning} of a string $S = \omega_0\dots \omega_r$, denoted by $s(S)$, as $s(\omega_0)$. Similarly, we define the \textbf{end} of a string $S$, denoted by $t(S)$, as $t(\omega_r)$. For quivers of type $\mathbb{\tilde{A}}_{p-1}$ the band modules lie in the homogeneous tubes and we can classify in which component of the Auslander--Reiten quiver the string modules reside by their shape, as we will soon see. For quivers of type $\tilde{\mathbb{A}}$, we take the convention that all named strings move in the counter-clockwise direction around the quiver $Q$. We denote by $ij_k$ the string module of length $k$ associated to the walk $\omega_1\cdots \omega_{k-1}$ where $s(\omega_1)=i$ and $t(\omega_{k-1})=j$. For $k=1$, $jj_1$ denotes the simple module at vertex $j$, associated to the walk $e_j$ where $e_j$ is the lazy path at vertex $j$. When drawing the graph associated to strings, we take the convention that the head of each arrow is at the bottom. An example of a graph of a string and its associated representation can be seen in Example \ref{exam: example of string module}.

String modules are useful, since their combinatorial structure can be used to describe both the AR translate $\tau$ \cite{BR}, and morphisms/extensions between string modules. To provide the description of the AR translate, we need some definitions. The following lemma follows from the definition of a string algebra.

\begin{lem}
Let $S$ be a string of positive length and let $\varepsilon \in \{ Q, Q^{-1} \}$. There is at most one way to add an arrow preceding $s(S)$ whose orientation agrees with $\varepsilon$, such that the resulting walk is still a string. Similarly there is at most one way to add such an arrow following $t(S)$.
\end{lem}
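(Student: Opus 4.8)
The plan is to reduce the lemma to a purely local analysis at the vertex $v := s(S)$, using only the two defining conditions of a string algebra. Write $S = \omega_0\cdots\omega_r$ in the walk notation of the preliminaries; since $S$ has positive length, its first letter $\omega_0$ exists. Adding an arrow preceding $s(S)$ whose orientation agrees with $\varepsilon$ amounts to prepending a single letter $\eta$ with $t(\eta) = v$, where $\eta \in Q_1$ when $\varepsilon = Q$ and $\eta \in Q_1^{-1}$ when $\varepsilon = Q^{-1}$, in such a way that $\eta\omega_0\cdots\omega_r$ is again a string. Because $S$ is already a string, the only obstructions that prepending $\eta$ can introduce involve the new pair $\eta\omega_0$: either $\eta\omega_0$ is a forbidden backtrack ($\alpha\alpha^{-1}$ or $\alpha^{-1}\alpha$), or $\eta\omega_0$ traverses a relation in $I$. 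So it suffices to show that, with $\varepsilon$ and $\omega_0$ fixed, at most one $\eta$ avoids both obstructions.

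First I would split into four cases according to whether $\omega_0$ is direct or inverse and whether $\varepsilon = Q$ or $Q^{-1}$. These fall into two qualitatively different types. In the two \emph{aligned} cases, where $\eta$ and $\omega_0$ are both direct or both inverse, the pair $\eta\omega_0$ corresponds to a genuine directed path through $v$: if both are direct this is the path $\eta\omega_0$, and if both are inverse, say $\eta = \beta^{-1}$ and $\omega_0 = \gamma^{-1}$, it is the reversed path $\gamma\beta$. Here no backtrack can occur, and the only constraint is that this directed path avoid $I$. The second defining condition of a string algebra then applies directly: for the fixed arrow $\omega_0$ (resp.\ $\gamma$) there is at most one arrow that may precede it (resp.\ follow it) without landing in $I$, so $\eta$ is determined up to at most one choice.

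In the two remaining \emph{mixed} cases, where exactly one of $\eta,\omega_0$ is direct and the other inverse, the two letters meet at $v$ as a common source or a common target (a peak or a valley), so $\eta\omega_0$ never traverses a relation and the sole constraint is that it not be a backtrack, i.e.\ $\eta$ must not be the formal inverse of $\omega_0$. By the first defining condition there are at most two arrows into $v$ (resp.\ out of $v$), and the underlying arrow of $\omega_0$ already occupies one of these slots; after discarding it to avoid the backtrack, at most one candidate for $\eta$ survives. Combining the aligned and mixed cases yields at most one admissible $\eta$ in every case, proving the claim for $s(S)$.

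Finally, the statement for $t(S)$ follows by the dual argument: appending a letter after $t(S)$ in $S$ is the same as prepending a letter before $s(S^{-1})$ in the reversed string $S^{-1}$ (with a correspondingly reversed orientation), and $S^{-1}$ is again a string of positive length, so the first part applies verbatim. The only step requiring care, and the one I would single out as the main point, is the aligned inverse case: one must correctly recognize that a pair of consecutive inverse letters encodes a directed path read backwards, so that the relation-avoidance condition (the second string-algebra axiom) is the right tool, rather than the incoming/outgoing count used in the mixed cases.
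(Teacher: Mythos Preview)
Your argument is correct. The paper does not actually give a proof of this lemma; it simply asserts that it ``follows from the definition of a string algebra,'' and your four-case analysis using the two string-algebra axioms is precisely the routine verification the paper is gesturing at.
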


If a string $S$ can be extended at its end by a direct arrow, then we can add a direct arrow at $t(S)$, followed by adding as many inverse arrows as possible. This operation is called \textbf{adding a cohook} at $t(S)$. Similarly, if $S$ can be extended by an inverse arrow at $s(S)$, then we can add an inverse arrow at $s(S)$, followed by as many direct arrows as possible. This operation is called \textbf{adding a cohook} at $s(S)$. The inverse operation is called \textbf{deleting a cohook}. To do this, we find the last direct arrow in $S$ and remove it along with all the subsequent inverse arrows. This operation is called \textbf{deleting a cohook} at the end of $S$. Similarly, we can find the first inverse arrow in $S$ and remove it along with all the preceding direct arrows. This operation is called \textbf{deleting a cohook} at the start of $S$. Note that deleting a cohook at the end (start) of $S$ may not be defined if $S$ does not any direct (inverse) arrows.

There are dual notions to this, namely \textbf{adding a hook} and \textbf{deleting a hook} respectively. If a string $S$ can be extended at its start by a direct arrow, we add the direct arrow at $s(S)$ followed by as many inverse arrows as possible. This operation is called \textbf{adding a hook} at the start of $S$. If a string $S$ can be extended at its end by adding an inverse arrow, we add this inverse arrow to $S$ along with as many direct arrows as possible. This operation is called \textbf{adding a hook} at the end of $S$. To \textbf{delete a hook} from the end of $S$, we find the last inverse arrow in $S$, and remove it along with all its subsequent direct arrows. Analogously, to \textbf{delete a hook} from the start of $S$, we find the first direct arrow of $S$ and remove it along with all preceding inverse arrows. Again, note that deleting a hook at the end (start) of $S$ may not be defined if $S$ does not have any inverse (direct) arrows. It is known that in type $\tilde{\mathbb{A}}$, all irreducible morphisms in the preprojective (preinjective) component are given by adding hooks (deleting cohooks). With these combinatorial notions, the following theorem was proven in \cite{BR}.

\begin{thm}\label{thm: tau of string algebra}
Let $B = \Bbbk Q/I$ be a string algebra, and let $S$ be a string. At either end of $S$, if it is possible to add a cohook, do it. Then, at the ends at which it was not possible to add a cohook, delete a hook. The result is $\tau_BS$. Inversely, at either end of $S$, if it is possible to delete a cohook, do it. Then, at the ends at which it was not possible to delete a cohook, add a hook. The result is $\tau^{-1}_BS$. \hfill \qed
\end{thm}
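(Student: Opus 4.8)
The plan is to prove the formula exactly as in \cite{BR}: compute the Auslander--Reiten translate from its defining description $\tau_B = D\,\mathrm{Tr}$ and then read the result off the string $S$ combinatorially. The first step is to write down the minimal projective presentation $P_1 \xrightarrow{f} P_0 \to M(S) \to 0$ of the string module $M(S)$, which for a string algebra can be read directly from the graph of $S$. The indecomposable summands of $P_0$ are the projectives $P(v)$ at the \emph{peaks} of $S$ (the vertices that are sources within the walk), and the summands of $P_1$ are the projectives at the \emph{valleys} (the sinks within the walk, together with the contributions forced at each end of $S$); the map $f$ is assembled from the maximal direct substrings issuing from each valley. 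I would record carefully how each of the two ends of $S$ contributes to $P_0$ and to $P_1$, since this is where the two cases in the statement originate.

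The second step is to apply the transpose $\mathrm{Tr}\,M(S) = \mathrm{coker}\big(\mathrm{Hom}_B(P_0,B) \to \mathrm{Hom}_B(P_1,B)\big)$ and then the duality $D$. The essential point keeping everything combinatorial is that $\mathrm{Hom}_B(P(v),B) \cong e_v B$ is a projective right $B$-module, which is itself a string module over the opposite string algebra $B^{\mathrm{op}}$; hence the whole computation stays within the category of string modules, and the cokernel reassembles into a single string. Performing a purely local analysis at one end of $S$, I would show that the effect of $D\,\mathrm{Tr}$ there is precisely one of the two moves in the statement: at an end from which $S$ can be extended by a direct arrow the transpose appends a cohook, while at the opposite type of end it removes a hook. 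Because the minimal presentation decomposes as a sum of contributions indexed by the peaks and valleys, the two ends are treated independently, so it suffices to run this local analysis once and then dualize.

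The statement for $\tau^{-1}_B$ follows by the dual computation $\tau^{-1}_B = \mathrm{Tr}\,D$, which interchanges the roles of direct and inverse arrows and hence of hooks and cohooks; alternatively one checks directly that the prescribed $\tau^{-1}_B$ procedure reverses the $\tau_B$ procedure move by move, using the preceding lemma to guarantee that each appended run of arrows is uniquely determined. As a consistency check I would test a short string over the running $\tilde{\mathbb{A}}$ quiver and confirm that adding cohooks and deleting hooks reproduces the left-hand term of the known almost split sequence.

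The main obstacle I anticipate is the combinatorial bookkeeping that identifies the transpose with the hook/cohook moves at each end: one must verify that ``can add a cohook'' and ``must delete a hook'' are exhaustive and mutually exclusive alternatives at a given end, and that the maximal run of inverse (respectively direct) arrows appended by a move is exactly the run forced by the admissible ideal $I$. Proving that the contributions of the two ends genuinely decouple is the technical heart of the argument, and it rests squarely on the defining string-algebra conditions — at most two arrows entering and leaving each vertex, and the uniqueness of a composable continuation modulo $I$ — which are precisely what make the minimal projective presentation split into independent peak and valley pieces.
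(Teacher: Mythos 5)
The paper does not actually prove this statement: it is imported verbatim from Butler--Ringel, with the sentence ``the following theorem was proven in \cite{BR}'' and a \textbf{qed} attached to the statement itself. So there is no in-paper argument to compare against; the relevant comparison is with the proof in \cite{BR}, and your proposal takes a genuinely different route from that one. Butler--Ringel never compute $\tau_B = D\,\mathrm{Tr}$ from a projective presentation. Instead they write down, for each string $S$, explicit \emph{canonical exact sequences} whose end terms are built exactly by the hook/cohook moves, and they verify directly that these sequences are almost split (checking the lifting property against an arbitrary non-retraction into $M(S)$, using the classification of maps between string modules). That approach buys locality for free: the sequence is constructed end-by-end, so no ``decoupling'' of the two ends ever needs to be proved. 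Your approach, computing the transpose from the minimal presentation, is viable in principle and has the advantage of handling the degenerate case gracefully (if $M(S)$ is projective, $\mathrm{Tr}\,M(S)=0$ automatically, whereas the combinatorial recipe in the statement is simply undefined there); but it front-loads all the difficulty into the step you yourself flag as the technical heart.

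Two places where your sketch, as written, has real gaps. First, the description of the minimal presentation is not quite right: $P_0$ is indeed the sum of projectives at the peaks of $S$, but the summands of $P_1$ are not ``the projectives at the valleys''; they are the projective covers of the string modules into which the first syzygy $\Omega M(S)$ decomposes, and these strings are obtained by extending the maximal direct substrings below each valley and at the two ends as far as the relations in $I$ allow. Getting this wrong changes the answer, since it is precisely these extensions that become the appended cohooks after dualizing. Second, the claim that the peak/valley contributions make the map $f$ split into independent pieces is false as stated: $f$ has a staircase shape in which each valley summand of $P_1$ maps to \emph{two} adjacent peak summands of $P_0$, so $\mathrm{Tr}\,M(S) = \mathrm{coker}\,\mathrm{Hom}_B(f,B)$ is computed from the whole staircase at once. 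What is true, and what you would need to prove, is that after applying $\mathrm{Hom}_B(-,B)$ and $D$ the interior of the staircase reproduces the interior of $S$ unchanged, so that only the two ends are modified; that statement is the theorem, not an input to it. If you want to complete a proof along your lines, the honest path is to carry out the cokernel computation explicitly for a general string (inducting on the number of peaks), or else to abandon the transpose and reprove the result the way \cite{BR} does, by exhibiting the canonical sequences and verifying the almost split property.
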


For more on representation theory of tame algebras and definitions of these components see \cite{bluebook2}. For more on string algebras see \cite{StringAlgebraInfo} and \cite{CBstrings}. \\

\begin{exmp}   \label{exam: example of string module}

Let $Q$ be the following quiver.

\[
%\xymatrixrowsep{10pt}\xymatrixcolsep{10pt}
\xymatrix{%begin xy matrix
1 \ar_{\alpha_1}[r] \ar@/^2pc/^{\alpha_4}[rrr] & 2 \ar_{\alpha_2}[r] & 3 \ar_{\alpha_3}[r] & 4 
	}%end xy matrix
\]

Consider the walk $\alpha_3\alpha_4^{-1}\alpha_1\alpha_2\alpha_3$. Then the string module associated to this walk is $34_6$, and we draw its graph as follows.

\[
%\xymatrixrowsep{10pt}\xymatrixcolsep{10pt}
\xymatrix{%begin xy matrix
 & & & 1 \ar[dddl] \ar[dr] & & & \\ & &  & & 2 \ar[dr] & & \\ & 3 \ar[dr] & & & & 3\ar[dr] & \\ & & 4 & & & & 4
	}%end xy matrix
\]

The representation associated to this string is the following.

\[
%\xymatrixrowsep{10pt}\xymatrixcolsep{10pt}
\xymatrix{%begin xy matrix
\Bbbk \ar_{1}[r] \ar@/^2pc/^{\begin{bmatrix} 0 \\ 1\end{bmatrix}}[rrr] & \Bbbk \ar_{\begin{bmatrix} 1 \\ 0\end{bmatrix}}[r] & \Bbbk^2 \ar_{\begin{bmatrix} 1 & 0 \\ 0 & 1\end{bmatrix}}[r] & \Bbbk^2 
	}%end xy matrix
\]

\end{exmp}

Let $w = w_0\cdots w_r$ be an indecomposable $\Bbbk \mathbb{\tilde{A}}_{p-1}$-module. It is well known that $w$ is \textbf{preprojective} if and only if there are arrows $\alpha,\beta\in Q_1$ such that $t(\alpha) = s(w_0)$ and $t(\beta) = t(w_r)$. Similarly, $w$ is \textbf{preinjective} if and only if there are arrows $\alpha,\beta\in Q_1$ such that $s(\alpha) = s(w_0)$ and $s(\beta) = t(w_r)$, $w$ is \textbf{left regular} if and only if there are arrows $\alpha,\beta\in Q_1$ such that $t(\alpha) = s(w_0)$ and $s(\beta) = t(w_r)$, $w$ is \textbf{right regular} if and only if there are arrows $\alpha,\beta\in Q_1$ such that $s(\alpha) = s(w_0)$ and $t(\beta) = t(w_r)$ and finally $w$ is \textbf{homogeneous} if and only if $w$ is a band. For instance, the $34_6$ string is a preprojective module over the path algebra of the quiver in Example \ref{exam: example of string module}.

\subsection{Cluster combinatorics in type $\tilde{\mathbb{A}}$}\label{sec: cluster combinatorics}

\begin{defn}
An \textbf{annulus} is any compact space homeomorphic to $\boldsymbol{S}^1 \times [0,1]$, where $\boldsymbol{S}^n$ is an n-sphere.
\end{defn}

\begin{rem} 
For some fixed $a,b\in \mathbb{R}$, the set $\{x\in \mathbb{R}^2 | a\leq \lVert x \lVert \leq b \}$ is an annulus.
\end{rem}

It is known that clusters in type $\tilde{\mathbb{A}}$ are in bijection with triangulations of an annulus associated to the quiver $Q^{\bm{\varepsilon}}$ \cite{clustersareinbijectionwithtriangulations}. In this section, we will explicitly provide this bijection. To a quiver $Q^{\bm{\varepsilon}}$ of type $\tilde{\mathbb{A}}_{p-1}$, we associate an annulus $A_{Q^{\bm{\varepsilon}}}$ as follows. If $\varepsilon_i = +(-)$, then there is a marked point on the outer (inner) circle of the annulus corresponding to the vertex $i$ in $Q_0^{\bm{\varepsilon}}$. We moreover write the marked points in clockwise order respecting the natural numerical order of the vertices in the quiver.

The following theorem is well-known. For instance the statement is used in \cite{Master'sThesisClustersandTriangulations}; however, the map given in the proof is not explicitly used.

\begin{thm} \label{map}
    The universal cover of an annulus $A$ is $\mathbb{R} \times [0,1]$.
\end{thm}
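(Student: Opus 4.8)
The plan is to reduce to the standard model and then exhibit an explicit simply connected covering space. Since being a universal cover is preserved under homeomorphism, I would first fix a homeomorphism $A \cong \boldsymbol{S}^1 \times [0,1]$, which exists by the definition of an annulus, and work with the right-hand model throughout. It then suffices to prove that $\mathbb{R} \times [0,1]$ is a simply connected covering space of $\boldsymbol{S}^1 \times [0,1]$.

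Next I would recall the standard covering map $p \colon \mathbb{R} \to \boldsymbol{S}^1$ given by $p(t) = (\cos 2\pi t, \sin 2\pi t)$, the prototypical covering of the circle: each point of $\boldsymbol{S}^1$ has an open arc neighborhood whose preimage is a disjoint union of open intervals, each carried homeomorphically onto the arc by $p$. I would then form the product map
$$
\Phi = p \times \mathrm{id}_{[0,1]} \colon \mathbb{R} \times [0,1] \longrightarrow \boldsymbol{S}^1 \times [0,1], \qquad \Phi(t,s) = (p(t), s).
$$
The key step is to verify that $\Phi$ is a covering map, which follows from the general fact that the product of a covering map with the identity is again a covering map. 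Concretely, if $U \subseteq \boldsymbol{S}^1$ is evenly covered by $p$ with $p^{-1}(U) = \bigsqcup_\lambda V_\lambda$ and each $p|_{V_\lambda}$ a homeomorphism, then $U \times [0,1]$ is evenly covered by $\Phi$, with $\Phi^{-1}(U \times [0,1]) = \bigsqcup_\lambda \left( V_\lambda \times [0,1] \right)$ and each restriction of $\Phi$ a homeomorphism onto $U \times [0,1]$.

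Finally I would identify this covering as the universal one by checking simple connectivity. The space $\mathbb{R} \times [0,1]$ is convex, hence contractible, so it is path-connected with trivial fundamental group. Since $\boldsymbol{S}^1 \times [0,1]$ is path-connected, locally path-connected, and semilocally simply connected, its universal cover exists and is unique up to isomorphism of covering spaces; being a simply connected covering space, $\mathbb{R} \times [0,1]$ must be it.

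The only point demanding genuine care is the treatment of the boundary. Because the annulus is a manifold with boundary rather than an open surface, one must confirm that both the evenly-covered-neighborhood condition and the simple-connectivity argument still hold at boundary points. This is precisely why the product form of $\Phi$ is convenient: it confines all of the nontrivial covering behavior to the $\boldsymbol{S}^1$-factor while leaving the $[0,1]$-factor untouched, so the two boundary circles $\boldsymbol{S}^1 \times \{0\}$ and $\boldsymbol{S}^1 \times \{1\}$ are covered by the boundary lines $\mathbb{R} \times \{0\}$ and $\mathbb{R} \times \{1\}$ in exactly the same manner as the interior, and no separate boundary analysis is required.
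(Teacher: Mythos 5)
Your proposal is correct and follows essentially the same route as the paper: the paper's covering map $(x,y)\mapsto (e^{2\pi i x},y)$ is exactly your product map $\Phi = p \times \mathrm{id}_{[0,1]}$ written in complex notation. You simply spell out the verifications (even covering, simple connectivity of $\mathbb{R}\times[0,1]$, uniqueness of the universal cover) that the paper leaves implicit.
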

\begin{proof}
    The map $ p: \mathbb{R} \times [0,1] \rightarrow A$, given by $(x,y)\mapsto (e^{2\pi ix},y)$, is the covering map. 
\end{proof}

Let $A_{n,m}$ be an annulus with $n$ vertices on the outside boundary component, and $m$ vertices on the inside boundary component. Up to homeomorphism we can assume that the points on the outer and inner boundary components are equally spaced. We can then denote the set of equally spaced outside (respectively inside) vertices by $\mathcal{A} = \{ a_1, ..., a_n\}$ (respectively $\mathcal{B} = \{ b_1, ... , b_m \}$) such that the \textbf{successor} of a vertex $a_{i\bmod n}$, denoted by $Suc(a_{i\bmod m})$, is $a_{i+1\bmod n}$, the vertex immediately clockwise of $a_{i\bmod n}$. The points in $\mathcal{B}$ satisfy an analogous relationship.

\begin{defn}\label{defn: arcs on annulus}
An \textbf{arc} on the annulus $A_{n,m}$ is an isotopy class of simple curves $a(x,y)[\lambda]$ with $x,y \in \mathcal{A} \cup \mathcal{B}$ and $\lambda \in \mathbb{Z}$ satisfying the following:

\begin{itemize}

\item any curve $\gamma\in a(x,y)[\lambda]$ has endpoints $x$ and $y$.
\item any curve $\gamma\in a(x,y)[\lambda]$ travels clockwise through the interior of the annulus from $x$ to $y$.
\item  If $\gamma$ begins and ends on the same boundary component, $\gamma$ is called an \textbf{exterior or boundary arc} and the integer $\lambda$ is the winding number of $\gamma$ about the inner boundary component. If $\gamma$ connects the two boundary components, $\gamma$ is called a \textbf{bridging arc} and $\lambda$ is the clockwise winding number of $\gamma$ about the inner boundary circle of $A_{n,m}$ when traversing $\gamma$ beginning from the outer boundary component. If an arc from $x$ to $y$ has counter-clockwise winding number $k$, we write $a(x,y)[-k]$.

\item We call an arc \textbf{small} if its winding number is 0.

\item A \textbf{triangulation} of the annulus $A_{n,m}$ is a maximal collection of $n+m=|Q_0^{\bm{\varepsilon}}| = p$ noncrossing arcs that are not homotopic to a piece of a boundary component.

\item We call a triangulation \textbf{small} if all the bridging arcs are small.

\end{itemize}

\end{defn}

\begin{rem}
    For some notational simplicity, unless the winding number of the arc is important, we will often disregard it.
\end{rem}

\begin{rem}
    Note that in any triangulation of $A_{n,m}$, to avoid crossings, the winding number of each exterior arc is at most 1, and there can be only one exterior arc on each boundary component with winding number 1. Moreover, to avoid crossings, the winding number of bridging arcs can differ by at most 1. 
\end{rem}

The relationship between the annulus and its universal cover has been described in \cite{Master'sThesisClustersandTriangulations}. We will now provide a similar construction with slightly different notations and conventions which will be more convenient for this paper. For instance, in contrast to \cite{Master'sThesisClustersandTriangulations}, for us, the set of marked points will be set of cosests  $\{ \frac{i}{n} +\mathbb{Z}\}\times \{0\}$ for $1\leq i \leq n$ on the lower boundary $\mathbb{R} \times \{0\}$ and $\{ \frac{j}{m}+\mathbb{Z} \}\times \{1\}$ for $1\leq j \leq m$ on the upper boundary $\mathbb{R} \times \{1\}$ due to our map $p$ defined in the proof of Theorem \ref{map}. If we restrict $p$ to $\mathbb{R}/\mathbb{Z} \times [0,1]$ we can in fact define $p^{-1}$ on $\mathcal{A}$ by $a'_i:=p^{-1}(a_i)=\frac{i}{n}$ for $1\leq i \leq n$ (respectively $b'_j:=p^{-1}(b_j)=\frac{j}{m}$ for $b_j\in \mathcal{B}$ and $1\leq j \leq m$). It follows that the preimage of $a_i \in \mathcal{A}$ is $\mathcal{A}_i' \vcentcolon = p^{-1}(a_i) = \{(a'_i + \mathbb{Z},1)\}$, respectively $ \mathcal{B}_j' \vcentcolon = p^{-1}(b_j) = \{(b'_j + \mathbb{Z},0) \}$ for $1\leq i \leq n$, and $1\leq j \leq m$. Let $\mathcal{A}'  = \cup_{i=1}^{n}\mathcal{A}'_i$ and define $\mathcal{B}'$ analogously. We can then consider the x-coordinates of both sets, denoted by $(\mathcal{A}')_x \, ((\mathcal{B}')_x \text{ respectively})$, and apply the $2\pi \epsilon$ Dehn twist obtained by Lemma $\ref{Shifting Lemma}$ to get $\mathcal{A'}_x \cap \mathcal{B'}_x = \emptyset$. Geometrically speaking, this shows that no $a'_i$ has the same x-coordinate as some $b'_j$. If we now consider a region of area 1 within $\mathbb{R} \times [0,1]$, by the nature of our homeomorphism every $a_i$ and $b_j$ is represented by at least one $a'_i$ and $b'_j$ respectively in our region of area 1. 

\begin{rem}
    In this region of area 1 constructed above, there is exactly one $a_k$ and $b_l$ that are represented twice, by $a'_k$, and $a'_k+(1,0)$ as well as $b'_l$ and $b'_l+(1,0)$.
\end{rem}

Thus, for any region $P$ of unit area within the universal cover, we have a relation $<$ given by the natural ordering of the $x$-coordinate of the points within $P$. There are two different types of chords that can be drawn in the universal cover of $A_{n,m}$. The  preimage of a bridging arc $a(x,y)$ under the covering map $p$ is the isotopy class of a straight line segment connecting the preimages of $x$ and $y$. We call this a \textbf{bridging or vertical chord}. If the bridging arc begins on the outer (inner) boundary component, the representative of the isotopy class in the universal cover that is a straight line segment will have positive (negative) slope by Lemma \ref{Shifting Lemma}. Moreover, the preimage of the arc $a(x,y)[\lambda]$ will have length $l$ such that $\sqrt{2}|\lambda|<l<\sqrt{2}(|\lambda|+1)$. The preimage of an exterior arc $a(x,y)$ under the covering map $p$ is the isotopy class of simple curves connecting the preimages of $x$ and $y$. We call this an \textbf{exterior or boundary chord}. A \textbf{triangulation} of the cover of an annulus is the preimage of a triangulation of the annulus under the covering map $p$.

We now fix a triangulation $T$ of the cover consisting entirely of bridging chords $X_{i \bmod p}$, labeling each consecutively from left to right with $i$, except we label $X_0$ by $X_p$. We will call these fixed vertical chords the \textbf{steep} chords. We will call the image of steep chords under the covering map \textbf{steep arcs}. Note that steep arcs must be bridging. The vertex set of $Q^{\bm{\varepsilon}}$ is now given by the equivalence classes of steep chords $Q_0^{\bm{\varepsilon}} = \{1, 2, \dots, p\}$ where two steep chords are equivalent if and only if they have the same label. Now if $X_j$ is clockwise of $X_{j+1}$, we have that $\varepsilon_j = -$. On the other hand, if $X_j$ is counter-clockwise of $X_{j+1}$, we have that $\varepsilon_j = +$. We now have a quiver $Q^{\bm{\varepsilon}}$ and a cluster algebra $\mathscr{A}_{Q^{\bm{\varepsilon}}}$ associated to the triangulation of $A_{Q^{\bm{\varepsilon}}}$ by steep arcs.

\begin{exmp}\label{exmp: steep triangulation}
Below is an example of a triangulation of an annulus $A_{2,2}$ by steep arcs and its lift to a triangulation of the universal cover by steep chords.
\begin{center}

\tikzset{every picture/.style={line width=0.75pt}} %set default line width to 0.75pt        

\begin{tikzpicture}[x=0.75pt,y=0.75pt,yscale=-1,xscale=1]
%uncomment if require: \path (0,273); %set diagram left start at 0, and has height of 273

%Shape: Donut [id:dp13796280114344128] 
\draw   (300.38,192.7) .. controls (300.38,181.14) and (310.16,171.77) .. (322.22,171.77) .. controls (334.28,171.77) and (344.06,181.14) .. (344.06,192.7) .. controls (344.06,204.25) and (334.28,213.62) .. (322.22,213.62) .. controls (310.16,213.62) and (300.38,204.25) .. (300.38,192.7)(269,192.7) .. controls (269,163.81) and (292.83,140.39) .. (322.22,140.39) .. controls (351.61,140.39) and (375.44,163.81) .. (375.44,192.7) .. controls (375.44,221.58) and (351.61,245) .. (322.22,245) .. controls (292.83,245) and (269,221.58) .. (269,192.7) ;
%Straight Lines [id:da14124755335853045] 
\draw    (322.22,171.77) ;
\draw [shift={(322.22,171.77)}, rotate = 0] [color={rgb, 255:red, 0; green, 0; blue, 0 }  ][fill={rgb, 255:red, 0; green, 0; blue, 0 }  ][line width=0.75]      (0, 0) circle [x radius= 3.35, y radius= 3.35]   ;
%Straight Lines [id:da591668315258826] 
\draw    (322.22,213.62) ;
\draw [shift={(322.22,213.62)}, rotate = 0] [color={rgb, 255:red, 0; green, 0; blue, 0 }  ][fill={rgb, 255:red, 0; green, 0; blue, 0 }  ][line width=0.75]      (0, 0) circle [x radius= 3.35, y radius= 3.35]   ;
%Straight Lines [id:da6488632062771571] 
\draw    (375.44,192.7) ;
\draw [shift={(375.44,192.7)}, rotate = 0] [color={rgb, 255:red, 0; green, 0; blue, 0 }  ][fill={rgb, 255:red, 0; green, 0; blue, 0 }  ][line width=0.75]      (0, 0) circle [x radius= 3.35, y radius= 3.35]   ;
%Straight Lines [id:da9223337012561086] 
\draw    (269,192.7) ;
\draw [shift={(269,192.7)}, rotate = 0] [color={rgb, 255:red, 0; green, 0; blue, 0 }  ][fill={rgb, 255:red, 0; green, 0; blue, 0 }  ][line width=0.75]      (0, 0) circle [x radius= 3.35, y radius= 3.35]   ;
%Curve Lines [id:da8451369490342808] 
\draw [color={rgb, 255:red, 0; green, 0; blue, 0 }  ,draw opacity=1 ]   (269,192.7) .. controls (285.44,170.39) and (302.44,162.39) .. (322.22,171.77) ;
%Curve Lines [id:da7065623405609751] 
\draw [color={rgb, 255:red, 0; green, 0; blue, 0 }  ,draw opacity=1 ]   (269,192.7) .. controls (287.44,213.39) and (305.44,219.39) .. (322.22,213.62) ;
%Curve Lines [id:da7100024043923119] 
\draw [color={rgb, 255:red, 0; green, 0; blue, 0 }  ,draw opacity=1 ]   (322.22,171.77) .. controls (343.44,157.39) and (365.44,175.39) .. (375.44,192.7) ;
%Curve Lines [id:da6787330717243187] 
\draw [color={rgb, 255:red, 0; green, 0; blue, 0 }  ,draw opacity=1 ]   (375.44,192.7) .. controls (365.44,205.18) and (352.44,224.18) .. (322.22,213.62) ;
%Straight Lines [id:da22437587234723377] 
\draw    (144,20) -- (524.44,20.62) ;
%Straight Lines [id:da636097963988552] 
\draw    (143,81) -- (523.44,81.62) ;
%Straight Lines [id:da630082919904466] 
\draw    (260.44,81.62) ;
\draw [shift={(260.44,81.62)}, rotate = 0] [color={rgb, 255:red, 0; green, 0; blue, 0 }  ][fill={rgb, 255:red, 0; green, 0; blue, 0 }  ][line width=0.75]      (0, 0) circle [x radius= 3.35, y radius= 3.35]   ;
%Straight Lines [id:da27457110128011375] 
\draw    (320.44,19.62) ;
\draw [shift={(320.44,19.62)}, rotate = 0] [color={rgb, 255:red, 0; green, 0; blue, 0 }  ][fill={rgb, 255:red, 0; green, 0; blue, 0 }  ][line width=0.75]      (0, 0) circle [x radius= 3.35, y radius= 3.35]   ;
%Straight Lines [id:da10892929120419859] 
\draw    (379.44,81.62) ;
\draw [shift={(379.44,81.62)}, rotate = 0] [color={rgb, 255:red, 0; green, 0; blue, 0 }  ][fill={rgb, 255:red, 0; green, 0; blue, 0 }  ][line width=0.75]      (0, 0) circle [x radius= 3.35, y radius= 3.35]   ;
%Straight Lines [id:da7395968329330507] 
\draw    (439.44,20.62) ;
\draw [shift={(439.44,20.62)}, rotate = 0] [color={rgb, 255:red, 0; green, 0; blue, 0 }  ][fill={rgb, 255:red, 0; green, 0; blue, 0 }  ][line width=0.75]      (0, 0) circle [x radius= 3.35, y radius= 3.35]   ;
%Straight Lines [id:da7322695134195807] 
\draw    (500.44,81.62) ;
\draw [shift={(500.44,81.62)}, rotate = 0] [color={rgb, 255:red, 0; green, 0; blue, 0 }  ][fill={rgb, 255:red, 0; green, 0; blue, 0 }  ][line width=0.75]      (0, 0) circle [x radius= 3.35, y radius= 3.35]   ;
%Straight Lines [id:da12556784743097915] 
\draw    (199.44,19.62) ;
\draw [shift={(199.44,19.62)}, rotate = 0] [color={rgb, 255:red, 0; green, 0; blue, 0 }  ][fill={rgb, 255:red, 0; green, 0; blue, 0 }  ][line width=0.75]      (0, 0) circle [x radius= 3.35, y radius= 3.35]   ;
%Straight Lines [id:da20182864156249392] 
\draw    (199.44,19.62) -- (260.44,81.62) ;
%Straight Lines [id:da942094743131092] 
\draw    (318.44,19.62) -- (379.44,81.62) ;
%Straight Lines [id:da047949365954927714] 
\draw    (439.44,20.62) -- (500.44,82.62) ;
%Straight Lines [id:da4318204157599017] 
\draw    (260.44,81.62) -- (320.44,19.62) ;
%Straight Lines [id:da2759756005398746] 
\draw    (379.44,82.62) -- (439.44,20.62) ;

% Text Node
\draw (288,152.4) node [anchor=north west][inner sep=0.75pt]    {$1$};
% Text Node
\draw (334,148.4) node [anchor=north west][inner sep=0.75pt]    {$2$};
% Text Node
\draw (341,217.4) node [anchor=north west][inner sep=0.75pt]    {$3$};
% Text Node
\draw (292,217.4) node [anchor=north west][inner sep=0.75pt]    {$4$};
% Text Node
\draw (209,42.4) node [anchor=north west][inner sep=0.75pt]    {$1$};
% Text Node
\draw (170,45) node [anchor=north west][inner sep=0.75pt]    {$\cdots$};
% Text Node
\draw (453,47.4) node [anchor=north west][inner sep=0.75pt]    {$1$};
% Text Node
\draw (475,45) node [anchor=north west][inner sep=0.75pt]    {$\cdots$};
% Text Node
\draw (279,35.4) node [anchor=north west][inner sep=0.75pt]    {$2$};
% Text Node
\draw (332,43.4) node [anchor=north west][inner sep=0.75pt]    {$3$};
% Text Node
\draw (398,37.4) node [anchor=north west][inner sep=0.75pt]    {$4$};

\end{tikzpicture}

\end{center}

With our labeling convention, this triangulation gives the quiver $$\xymatrix{1 \ar[r] \ar@/^1.5pc/[rrr] & 2 & 3 \ar[l] \ar[r] & 4}.$$

\end{exmp}

Now consider any arc $a(x,y)[\lambda]$ on the annulus which does not self intersect nontrivially. This arc lifts to a chord in the universal cover which either:
\begin{enumerate}
    \item intersects some finite subset of steep chords,
    \item is one of the $p$ steep chords: $X_1,X_2,\dots, X_p$, or
    \item is homotopic to a piece of the boundary component of the cover.
\end{enumerate} 

We will now define a map $\varphi$ that associates to each chord in the universal cover of $A_{Q^{\bm\varepsilon}}$, a $\Bbbk Q^{\bm{\varepsilon}}$-module as follows. If it is a steep chord or homotopic to a piece of a boundary, $\varphi$ assigns to it the $0$ module. If it is not, suppose that when ordered from left to right the subset of vertical chords the lifted arc intersects is $\{X_i, X_{i+1}, \dots, X_j\}$ where subscripts are taken modulo $p$, where instead of $X_0$, we write $X_p$. Then to this lifted arc, $\varphi$ assigns the string module $ij_{j-i+1}$. As was shown in \cite{Master'sThesisClustersandTriangulations}, $\varphi$ forms a bijection between non-steep arcs on the annulus that are not homotopic to the boundary and indecomposable $\Bbbk Q^{\bm{\varepsilon}}$-modules.

Using the aforementioned bijection, we can get a cluster from any triangulation of the annulus and any triangulation of the annulus from a cluster as follows. An arbitrary triangulation of the annulus will consist of $l\ge0$ steep arcs that lift to steep chords say $\{X_{j_1}, X_{j_2}, \dots, X_{j_l}\}$, and $p - l$ arcs that don't lift to steep chords, say $\{Y_{l+1}, Y_{l+2}, \dots, Y_{p}\}$. The cluster corresponding to this triangulation is $\varphi(Y_{l+1}) \oplus \dots \oplus \varphi(Y_{n+1}) \oplus \Sigma P(j_1)\ \oplus \dots \oplus \Sigma P(j_l)$ where $\Sigma P(i)$ is the shifted projective at vertex $i$ in the cluster category $\mathcal{C}_{Q^{\bm{\varepsilon}}}$. As shown in \cite{Master'sThesisClustersandTriangulations}, this association is a bijection. 

\begin{exmp}
Below is an example of a triangulation of the annulus, its lift to the universal cover, and its corresponding cluster in $\mathcal{C}_{Q^{\bm{\varepsilon}}}$ from Example \ref{exmp: steep triangulation}.

\begin{center}

\tikzset{every picture/.style={line width=0.75pt}} %set default line width to 0.75pt        

\begin{tikzpicture}[x=0.75pt,y=0.75pt,yscale=-1,xscale=1]
%uncomment if require: \path (0,273); %set diagram left start at 0, and has height of 273

%Shape: Donut [id:dp7004204483967043] 
\draw   (118.38,195.7) .. controls (118.38,184.14) and (128.16,174.77) .. (140.22,174.77) .. controls (152.28,174.77) and (162.06,184.14) .. (162.06,195.7) .. controls (162.06,207.25) and (152.28,216.62) .. (140.22,216.62) .. controls (128.16,216.62) and (118.38,207.25) .. (118.38,195.7)(87,195.7) .. controls (87,166.81) and (110.83,143.39) .. (140.22,143.39) .. controls (169.61,143.39) and (193.44,166.81) .. (193.44,195.7) .. controls (193.44,224.58) and (169.61,248) .. (140.22,248) .. controls (110.83,248) and (87,224.58) .. (87,195.7) ;
%Straight Lines [id:da46972879131404865] 
\draw    (140.22,174.77) ;
\draw [shift={(140.22,174.77)}, rotate = 0] [color={rgb, 255:red, 0; green, 0; blue, 0 }  ][fill={rgb, 255:red, 0; green, 0; blue, 0 }  ][line width=0.75]      (0, 0) circle [x radius= 3.35, y radius= 3.35]   ;
%Straight Lines [id:da867674360549088] 
\draw    (140.22,216.62) ;
\draw [shift={(140.22,216.62)}, rotate = 0] [color={rgb, 255:red, 0; green, 0; blue, 0 }  ][fill={rgb, 255:red, 0; green, 0; blue, 0 }  ][line width=0.75]      (0, 0) circle [x radius= 3.35, y radius= 3.35]   ;
%Straight Lines [id:da6946508994064577] 
\draw    (193.44,195.7) ;
\draw [shift={(193.44,195.7)}, rotate = 0] [color={rgb, 255:red, 0; green, 0; blue, 0 }  ][fill={rgb, 255:red, 0; green, 0; blue, 0 }  ][line width=0.75]      (0, 0) circle [x radius= 3.35, y radius= 3.35]   ;
%Straight Lines [id:da5640573500085408] 
\draw    (87,195.7) ;
\draw [shift={(87,195.7)}, rotate = 0] [color={rgb, 255:red, 0; green, 0; blue, 0 }  ][fill={rgb, 255:red, 0; green, 0; blue, 0 }  ][line width=0.75]      (0, 0) circle [x radius= 3.35, y radius= 3.35]   ;
%Curve Lines [id:da8004226597745752] 
\draw [color={rgb, 255:red, 248; green, 231; blue, 28 }  ,draw opacity=1 ]   (87,195.7) .. controls (103.44,173.39) and (120.44,165.39) .. (140.22,174.77) ;
%Curve Lines [id:da8305543102640858] 
\draw [color={rgb, 255:red, 208; green, 2; blue, 27 }  ,draw opacity=1 ]   (140.22,174.77) .. controls (100.74,171.29) and (108.44,220.94) .. (135.44,222.94) .. controls (162.44,224.94) and (190.44,174.94) .. (140.22,174.77) -- cycle ;
%Curve Lines [id:da44605248154988475] 
\draw [color={rgb, 255:red, 65; green, 117; blue, 5 }  ,draw opacity=1 ]   (140.22,174.77) .. controls (161.44,160.39) and (183.44,178.39) .. (193.44,195.7) ;
%Curve Lines [id:da3166031798536961] 
\draw [color={rgb, 255:red, 22; green, 48; blue, 226 }  ,draw opacity=1 ]   (140.22,174.77) .. controls (219.44,167.94) and (133.44,286.94) .. (87,195.7) ;
%Straight Lines [id:da5186957903019331] 
\draw    (144,20) -- (524.44,20.62) ;
%Straight Lines [id:da21487842629715903] 
\draw    (143,81) -- (523.44,81.62) ;
%Straight Lines [id:da2042838966023084] 
\draw    (260.44,81.62) ;
\draw [shift={(260.44,81.62)}, rotate = 0] [color={rgb, 255:red, 0; green, 0; blue, 0 }  ][fill={rgb, 255:red, 0; green, 0; blue, 0 }  ][line width=0.75]      (0, 0) circle [x radius= 3.35, y radius= 3.35]   ;
%Straight Lines [id:da058661824233705806] 
\draw    (320.44,19.62) ;
\draw [shift={(320.44,19.62)}, rotate = 0] [color={rgb, 255:red, 0; green, 0; blue, 0 }  ][fill={rgb, 255:red, 0; green, 0; blue, 0 }  ][line width=0.75]      (0, 0) circle [x radius= 3.35, y radius= 3.35]   ;
%Straight Lines [id:da9788386532358344] 
\draw    (379.44,81.62) ;
\draw [shift={(379.44,81.62)}, rotate = 0] [color={rgb, 255:red, 0; green, 0; blue, 0 }  ][fill={rgb, 255:red, 0; green, 0; blue, 0 }  ][line width=0.75]      (0, 0) circle [x radius= 3.35, y radius= 3.35]   ;
%Straight Lines [id:da49161023287715344] 
\draw    (439.44,20.62) ;
\draw [shift={(439.44,20.62)}, rotate = 0] [color={rgb, 255:red, 0; green, 0; blue, 0 }  ][fill={rgb, 255:red, 0; green, 0; blue, 0 }  ][line width=0.75]      (0, 0) circle [x radius= 3.35, y radius= 3.35]   ;
%Straight Lines [id:da48398136937146896] 
\draw    (500.44,81.62) ;
\draw [shift={(500.44,81.62)}, rotate = 0] [color={rgb, 255:red, 0; green, 0; blue, 0 }  ][fill={rgb, 255:red, 0; green, 0; blue, 0 }  ][line width=0.75]      (0, 0) circle [x radius= 3.35, y radius= 3.35]   ;
%Straight Lines [id:da7371384774032819] 
\draw    (199.44,19.62) ;
\draw [shift={(199.44,19.62)}, rotate = 0] [color={rgb, 255:red, 0; green, 0; blue, 0 }  ][fill={rgb, 255:red, 0; green, 0; blue, 0 }  ][line width=0.75]      (0, 0) circle [x radius= 3.35, y radius= 3.35]   ;
%Straight Lines [id:da5434586165707853] 
\draw [color={rgb, 255:red, 248; green, 231; blue, 28 }  ,draw opacity=1 ]   (199.44,19.62) -- (260.44,81.62) ;
%Straight Lines [id:da6794629815928146] 
\draw    (318.44,19.62) -- (379.44,81.62) ;
%Straight Lines [id:da42790321044288127] 
\draw    (439.44,20.62) -- (500.44,82.62) ;
%Straight Lines [id:da18481027298094221] 
\draw [color={rgb, 255:red, 65; green, 117; blue, 5 }  ,draw opacity=1 ]   (260.44,81.62) -- (320.44,19.62) ;
%Straight Lines [id:da9652428281818806] 
\draw [color={rgb, 255:red, 0; green, 0; blue, 0 }  ,draw opacity=1 ]   (379.44,82.62) -- (439.44,20.62) ;
%Straight Lines [id:da3894802311017891] 
\draw [color={rgb, 255:red, 22; green, 48; blue, 226 }  ,draw opacity=1 ]   (260.44,81.62) -- (439.44,20.62) ;
%Curve Lines [id:da8251168599167913] 
\draw [color={rgb, 255:red, 208; green, 2; blue, 27 }  ,draw opacity=1 ]   (260.44,81.62) .. controls (343.44,59.94) and (458.44,74.94) .. (500.44,81.62) ;
%Straight Lines [id:da44792963431644384] 
\draw    (243.44,194.93) -- (373.44,193.96) ;
\draw [shift={(375.44,193.94)}, rotate = 179.57] [color={rgb, 255:red, 0; green, 0; blue, 0 }  ][line width=0.75]    (10.93,-3.29) .. controls (6.95,-1.4) and (3.31,-0.3) .. (0,0) .. controls (3.31,0.3) and (6.95,1.4) .. (10.93,3.29)   ;
\draw [shift={(241.44,194.94)}, rotate = 359.57] [color={rgb, 255:red, 0; green, 0; blue, 0 }  ][line width=0.75]    (10.93,-3.29) .. controls (6.95,-1.4) and (3.31,-0.3) .. (0,0) .. controls (3.31,0.3) and (6.95,1.4) .. (10.93,3.29)   ;
%Straight Lines [id:da3870941132984089] 
\draw    (198.2,151) -- (282.67,105.88) ;
\draw [shift={(284.44,104.94)}, rotate = 151.89] [color={rgb, 255:red, 0; green, 0; blue, 0 }  ][line width=0.75]    (10.93,-3.29) .. controls (6.95,-1.4) and (3.31,-0.3) .. (0,0) .. controls (3.31,0.3) and (6.95,1.4) .. (10.93,3.29)   ;
\draw [shift={(196.44,151.94)}, rotate = 331.89] [color={rgb, 255:red, 0; green, 0; blue, 0 }  ][line width=0.75]    (10.93,-3.29) .. controls (6.95,-1.4) and (3.31,-0.3) .. (0,0) .. controls (3.31,0.3) and (6.95,1.4) .. (10.93,3.29)   ;
%Straight Lines [id:da5111518554485386] 
\draw    (348.18,106.93) -- (427.7,151.95) ;
\draw [shift={(429.44,152.94)}, rotate = 209.52] [color={rgb, 255:red, 0; green, 0; blue, 0 }  ][line width=0.75]    (10.93,-3.29) .. controls (6.95,-1.4) and (3.31,-0.3) .. (0,0) .. controls (3.31,0.3) and (6.95,1.4) .. (10.93,3.29)   ;
\draw [shift={(346.44,105.94)}, rotate = 29.52] [color={rgb, 255:red, 0; green, 0; blue, 0 }  ][line width=0.75]    (10.93,-3.29) .. controls (6.95,-1.4) and (3.31,-0.3) .. (0,0) .. controls (3.31,0.3) and (6.95,1.4) .. (10.93,3.29)   ;

% Text Node
\draw (209,42.4) node [anchor=north west][inner sep=0.75pt]    {$1$};
% Text Node
\draw (453,47.4) node [anchor=north west][inner sep=0.75pt]    {$1$};
% Text Node
\draw (279,35.4) node [anchor=north west][inner sep=0.75pt]    {$2$};
% Text Node
\draw (323,36.4) node [anchor=north west][inner sep=0.75pt]    {$3$};
% Text Node
\draw (398,37.4) node [anchor=north west][inner sep=0.75pt]    {$4$};
% Text Node
\draw (398,184.4) node [anchor=north west][inner sep=0.75pt]    {$\textcolor[rgb]{0.09,0.19,0.89}{S(3)} \ \oplus \ \textcolor[rgb]{0.82,0.01,0.11}{34_{2}} \ \oplus \ \textcolor[rgb]{0.97,0.91,0.11}{\Sigma P( 1)} \ \oplus \ \textcolor[rgb]{0.25,0.46,0.02}{\Sigma P( 2)}$};

\end{tikzpicture}

\end{center}
\end{exmp}

\section{Counting Method}\label{sec: counting method}

In this section, we will place clusters into infinite families using their triangulations and count how many such families there are.

\begin{defn}
    We say that two triangulations of the annulus $A_{n,m}$ are \textbf{inner equivalent} if and only if they differ by a sequence of $2\pi$ Dehn twists of the inner (equivalently outer) boundary component of the annulus. We will also refer to inner equivalence classes as \textbf{families} throughout the paper.
\end{defn}

Our goal for this section is to prove the following theorem.

\begin{thm}\label{finalresult} $\textbf{}$ \\
    The number of inner equivalence classes of triangulations of an annulus with $n$ vertices on the outside boundary component and $m$ vertices on the inside boundary component is $T(A_{n,m})=n\big(\sum_{i=0}^{m-1}(i+1)C_iC_{n+m-i-1})$ where $C_k=\frac{1}{k+1}\frac{(2k)!}{k!k!}$ is the $k^{th}$ Catalan number.
\end{thm}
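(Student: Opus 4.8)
The plan is to convert the manifestly infinite count of triangulations into a finite count of canonical representatives---one per family---and then to enumerate those representatives by cutting them into polygons whose triangulations are counted by Catalan numbers. The factor $n$ will come from a rooting choice on the outer boundary, and the weighted convolution $\sum_i (i+1)C_iC_{n+m-i-1}$ will come from splitting each rooted, normalized triangulation into two independent polygonal regions.

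First I would pin down a canonical representative in each inner equivalence class. A $2\pi$ Dehn twist of the inner boundary shifts the winding number of \emph{every} bridging arc by the same integer, while by the remark following Definition \ref{defn: arcs on annulus} the winding numbers of the bridging arcs in a single triangulation differ by at most $1$. Hence every family contains exactly one triangulation whose bridging arcs all have winding number in $\{0,1\}$ with at least one equal to $0$; I would take this as the \textbf{normalized representative}. The subtle points here are (i) checking that such a triangulation always has a bridging arc at all (so that the winding normalization is meaningful), which follows because the two boundary components cannot be separated in a triangulation of an annulus, and (ii) the degenerate case in which all bridging winding numbers coincide, where uniqueness of the normalized form must be argued directly rather than via the ``$\{0,1\}$'' normalization. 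After this step the theorem reduces to counting normalized triangulations of $A_{n,m}$.

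Next I would enumerate normalized triangulations by a cut-and-decompose bijection, passing to the universal cover $\mathbb{R}\times[0,1]$ of Theorem \ref{map} where, after the generic shift supplied by the Shifting Lemma, the bridging chords are linearly ordered and noncrossing. The rooting is a choice of the outer endpoint of a distinguished (say, extremal winding-$0$) bridging arc, accounting for the factor $n$. Cutting the annulus along the distinguished arc unrolls it into a disk, and the single place where the winding number of the bridging arcs jumps from $0$ to $1$ supplies a canonical diagonal splitting this disk into two regions: an inner region abutting the inner boundary that contains $i$ of the inner vertices, and an outer region containing the remaining vertices. If $b^\ast$ denotes the inner endpoint of the distinguished arc, then the other $m-1$ inner vertices split as $i$ on one side and $m-1-i$ on the other, which forces the range $0\le i\le m-1$; the inner region is then an $(i+2)$-gon triangulated in $C_i$ ways and the outer region an $(n+m-i+1)$-gon triangulated in $C_{n+m-i-1}$ ways. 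The weight $(i+1)$ records the number of admissible positions at which the $0/1$ winding jump can be inserted among the $i+1$ slots along the inner region (equivalently $(i+1)C_i=\binom{2i}{i}$, the number of rooted triangulations of that region). Assembling these gives $n\sum_{i=0}^{m-1}(i+1)C_iC_{n+m-i-1}$.

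The hard part will be step two: verifying that this decomposition is a genuine bijection rather than an over- or under-count. Concretely, I expect the real work to be (a) showing the distinguished arc and the winding-jump diagonal are \emph{canonically} determined by a normalized triangulation, so that the rooting contributes exactly $n$ and not a smaller number after symmetry; (b) proving that arbitrary triangulations of the two polygons glue back to a noncrossing triangulation of $A_{n,m}$ with the correct winding numbers, and that every normalized triangulation arises exactly once this way; and (c) confirming the precise polygon sizes and the multiplicity $(i+1)$, which is where the truncation at $i=m-1$ and the departure from the plain Catalan convolution $\sum_{i}C_iC_{n+m-1-i}=C_{n+m}$ genuinely enter. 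I would validate the bookkeeping against the known special case $m=1$, where the formula collapses to $nC_n$, recovering the count of Igusa and the first author in \cite{IgusaMaresca}, and against a direct enumeration of small cases such as $A_{2,2}$ (which the formula predicts to have $18$ families).
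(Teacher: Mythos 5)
Your overall architecture---a unique winding-normalized representative in each family, a factor of $n$ from rooting at an outer vertex, and a product of two Catalan numbers with a positional weight $(i+1)$---is the same skeleton as the paper's proof: your normalization parallels Lemma \ref{smalltriangulation}, your root is the vertex $a_k$, your ``jump position'' plays the role of the fundamental date line, and your two regions are $R_1$ and $R_2$ of Figure \ref{typingissue}. However, the enumeration step as you have written it cannot work, and what is missing is exactly the paper's central device. Cutting the annulus along one distinguished bridging arc produces a disk with $n+m+2$ marked boundary points (both endpoints of the cut arc are doubled), and a single diagonal splits an $(n+m+2)$-gon into two pieces whose vertex counts sum to $n+m+4$. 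Your claimed pieces, an $(i+2)$-gon and an $(n+m-i+1)$-gon, sum to $n+m+3$, so no diagonal produces them. If you repair the sizes in the only way a one-diagonal cut allows, the outer piece becomes an $(n+m-i+2)$-gon and your count becomes $n\sum_{i=0}^{m-1}(i+1)C_iC_{n+m-i}$; already at $m=1$ this gives $nC_{n+1}$ rather than the correct $nC_n$, so the discrepancy is fatal rather than cosmetic---your own $m=1$ sanity check would detect it once the sizes are made consistent.

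The missing idea is a \emph{second} canonical bridging arc at the root. In the paper the representative triangulation is required to contain three mutually adjacent arcs: two bridging arcs $\alpha,\beta$ sharing the outer endpoint $a_k$ (one crossing the fundamental date line, one not) together with the exterior arc $\gamma$ closing them into a triangle. The cut-open disk then decomposes into that triangle, the $(i+j+2)$-gon $R_1$ cut off by $\gamma$, and the $(n+m-i-j+1)$-gon $R_2$; this three-region decomposition, not a two-region one, is where the product $C_{i+j}C_{n+m-(i+j)-1}$ comes from. Equivalently, in your language, the triangulation of the ``outer region'' is not free: it is constrained to contain the diagonal $\alpha$, which is what cuts $C_{n+m-i}$ down to $C_{n+m-i-1}$ and simultaneously prevents the same triangulation from being counted for several choices of diagonal. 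Producing the triple $(\alpha,\beta,\gamma)$ canonically is the real content of the theorem: existence requires the maximality arguments of Lemma \ref{lem:surjectivity} (including the extra Dehn twist needed precisely in your degenerate ``no jump'' case, where all bridging arcs have winding $0$), and uniqueness, i.e.\ that no family is counted twice, is Lemma \ref{disjoint}. These are exactly the items you deferred as (a)--(c), so as it stands the proposal reproduces the paper's formula but not a proof; filling the gap essentially forces you to rebuild the paper's defining-arc construction.
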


\begin{rem}
    Note that in the case $m=1$, the formula in Theorem \ref{finalresult} becomes $nC_n$ after applying the convolution formula for the Catalan numbers, which agrees with the formula found in \cite{IgusaMaresca}.
\end{rem}

In order to count the number of inner equivalence classes of triangulations, we must find a unique representative of each class. To do this, we begin with a lemma.

\begin{lem}\label{cor: existence of bridging arc}
    Let $t$ be a triangulation of $A_{n,m}$. Then there exist at least two bridging arcs contained in $t$.  
\end{lem}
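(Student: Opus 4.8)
The plan is to show that any triangulation of $A_{n,m}$ must contain at least two bridging arcs, using the fact that a triangulation is a \emph{maximal} collection of $n+m$ noncrossing arcs together with a connectivity/counting argument on the annulus. The key structural observation is that the annulus has two distinct boundary components, and no exterior (boundary) arc connects them; only bridging arcs do. First I would argue that a triangulation must contain \emph{at least one} bridging arc: if every arc in the triangulation were exterior, then the collection of arcs together with the boundary would fail to connect the inner boundary to the outer boundary, so the complement would contain a region that is not a triangle (topologically, an annular region), contradicting that $t$ is a genuine triangulation. In other words, a triangulation of a surface cuts it into triangles, and an annulus cannot be cut into triangles without at least one arc joining the two boundary components.

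Next I would upgrade ``at least one'' to ``at least two.'' The plan here is to pass to the universal cover $\mathbb{R}\times[0,1]$ via the covering map $p$ from Theorem \ref{map}, where the lift of the triangulation becomes a periodic (period-one in the $x$-direction) triangulation of the strip. A single bridging arc on the annulus lifts to infinitely many parallel bridging chords in the cover, all $\mathbb{Z}$-translates of one another. Suppose for contradiction that $t$ contains exactly one bridging arc $\gamma$. Then in the cover, consecutive lifts $\tilde\gamma$ and $\tilde\gamma+(1,0)$ of $\gamma$ bound a fundamental domain $P$ of unit area, and the only chords crossing the interior of $P$ that connect the top boundary $\mathbb{R}\times\{1\}$ to the bottom $\mathbb{R}\times\{0\}$ are these two translates of $\tilde\gamma$. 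The region $P$ then contains all $n$ lower marked points and all $m$ upper marked points in its closure (with one of each identified on the two vertical sides), and it must be triangulated using only exterior chords in its interior. The obstruction I expect to confront is a careful Euler-characteristic or arc-counting bookkeeping inside $P$: a triangulated polygonal region with a fixed number of boundary marked points admits only a fixed number of internal arcs, and with only one bridging arc available the arcs cannot simultaneously triangulate the neighborhoods of both boundary components.

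The cleanest way to close this, which I would pursue as the main line, is a direct counting argument. A triangulation of $A_{n,m}$ has exactly $n+m$ arcs. Let $b$ be the number of bridging arcs and $e$ the number of exterior arcs, so $b+e=n+m$. Cutting the annulus along all $b$ bridging arcs (each used twice as it becomes two sides of the resulting regions) disconnects it into $b$ disc-shaped polygons whose boundaries alternate between boundary segments and copies of bridging arcs. Each such polygon is then triangulated by the exterior arcs. Counting the total number of triangles two ways — once via the standard formula for a triangulated annulus with the given marked points, and once as the sum over the $b$ polygons — forces $b\ge 2$, since a single polygon obtained by cutting along one bridging arc would be an annulus-with-a-slit whose boundary does not close up into a single disc. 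The heart of the matter, and the step I expect to be the main obstacle, is verifying rigorously that cutting along a \emph{single} bridging arc yields a region that is \emph{not} a disc (it remains an annulus-minus-a-slit, i.e.\ still has nontrivial topology), so it cannot be filled by exterior arcs alone; this is precisely where the distinction between one and two bridging arcs lives, and it is the geometric crux that the formal counting must capture.

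\emph{Remark on alternative strategy.} If the cutting argument proves delicate to make precise, I would instead argue by contradiction at the level of the cover: assume $b=1$ and show that some complementary region of the lifted triangulation fails to be a triangle because it is bounded on a side by a nontrivial arc of a boundary component without an opposing bridging chord, contradicting maximality of $t$ (one could add a second, non-isotopic bridging arc, so $t$ was not maximal). This maximality-based phrasing may ultimately be the shortest route and sidesteps the Euler-characteristic bookkeeping entirely.
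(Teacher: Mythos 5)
There is a genuine flaw in your main line of argument. Your ``geometric crux'' --- that cutting the annulus along a \emph{single} bridging arc yields a region that ``is not a disc'' and ``still has nontrivial topology'' --- is false. Cutting $\boldsymbol{S}^1\times[0,1]$ along an arc joining the two boundary components is the standard way to \emph{produce} a disc: the result is topologically a rectangle, i.e.\ a polygon with $n+m+2$ marked boundary points (the two endpoints of the bridging arc each appearing twice). Worse for your strategy, the pure counting is then perfectly consistent with $b=1$: a triangulation of an $(n+m+2)$-gon uses $n+m-1$ diagonals, and together with the one bridging arc this gives exactly $n+m$ arcs, which is precisely the size of a triangulation of $A_{n,m}$. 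So neither the Euler-characteristic bookkeeping nor the two-ways triangle count can force $b\ge 2$; the obstruction is not numerical. What actually fails when $b=1$ is geometric: in the cut polygon, the side coming from the bridging arc has one endpoint on the outer boundary and one on the inner boundary, so the triangle of the triangulation containing that side must have a third vertex joined to both endpoints, and whichever vertex you choose, one of those two joining diagonals connects an outer marked point to an inner marked point --- i.e.\ it projects to a \emph{second} bridging arc. Your proposal never identifies this step, and the step you put in its place is wrong. Your fallback remark (maximality in the cover) points in a workable direction but is left entirely undeveloped, so as written the proof does not close.

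For comparison, the paper avoids all of this by bounding the number of \emph{exterior} arcs instead: on the outer boundary, a maximal family of noncrossing exterior arcs consists of one loop $a(a_j,a_j)[1]$ together with the diagonals of an $(n+1)$-gon, hence at most $n-1$ arcs, and similarly at most $m-1$ on the inner boundary. So any triangulation has at most $n+m-2$ exterior arcs, and since it has $n+m$ arcs in total, at least two must be bridging. If you want to salvage your cutting argument, you must replace the ``not a disc'' claim with the triangle-adjacent-to-the-cut-edge argument above; otherwise the paper's exterior-arc count is the cleaner route.
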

\begin{proof}
    Consider the annulus $A_{n,m}$. To prove this statement we will show that there are at most $n+m - 2$ boundary arcs in any triangulation. To do this, we begin constructing a triangulation of this annulus by placing the maximal number of small boundary arcs on the outer boundary component that can occur in a triangulation. Since these boundary arcs do not cross, there exists an endpoint $a_j$ such that we can place a loop; that is, a boundary arc $a(a_j,a_j)[1]$ that does not cross any other boundary arc, hence is allowed in the triangulation. We conclude that to maximize the total number of boundary arcs on the outer component, we must have one loop. To compute the maximal number of boundary arcs, we thus without loss of generality fix a loop at vertex $a_1$. Then triangulating the outer boundary component with small boundary arcs amounts to triangulating an $n+1$-gon with vertices consecutively labeled $a_1,a_1,a_2,a_3,\dots,a_n$. There are $n-2$ diagonals in any such triangulation and therefore we conclude that the maximal number of boundary arcs in any triangulation is $n-1$. The same proof holds for the the inner boundary component. Therefore there are at most $n+m-2$ boundary arcs in any triangulation of $A_{n,m}$. Since a triangulation of this annulus contains $n+m$ arcs, we have the result.  
\end{proof}

The following lemma was proven in \cite{IgusaMaresca}, however, here we provide a nicer proof.

\begin{lem}\label{smalltriangulation}
    In every family of triangulations $[t]$ there exists a small triangulation.
\end{lem}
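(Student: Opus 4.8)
The plan is to produce, inside an arbitrary family $[t]$, a representative whose bridging arcs all attain the minimal possible winding number, which we normalize to $0$. First I would record the two inputs that make this possible. By Lemma \ref{cor: existence of bridging arc} the triangulation $t$ contains at least two bridging arcs, so the set $W(t)$ of winding numbers of bridging arcs of $t$ is nonempty; set $k=\min W(t)$. By the remark following Definition \ref{defn: arcs on annulus}, the winding numbers of the bridging arcs of any triangulation differ by at most $1$, so $W(t)\subseteq\{k,k+1\}$. The entire problem is thus reduced to shifting this finite window of winding numbers down so that its minimum becomes $0$, using only moves that stay inside $[t]$, namely $2\pi$ Dehn twists of the inner boundary component.

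The mechanism I would use is that a single $2\pi$ Dehn twist of the inner boundary changes the winding number of \emph{every} bridging arc by a common $\pm 1$, while carrying triangulations to triangulations. The cleanest way to see the uniformity is in the universal cover $\mathbb{R}\times[0,1]$ of Theorem \ref{map}: there the twist is isotopic to a shear fixing the lower boundary and translating the upper boundary by one full period. Every bridging chord joins the lower boundary to the upper boundary, so the shear alters its horizontal displacement by exactly one period; via the dictionary $\sqrt{2}|\lambda|<l<\sqrt{2}(|\lambda|+1)$ for the preimage of $a(x,y)[\lambda]$, this is precisely $\lambda\mapsto\lambda-1$, applied simultaneously to all bridging arcs. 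Since the shear commutes with the deck group and is a homeomorphism, it sends the lifted triangulation to a lifted triangulation, hence descends to a genuine triangulation in the same family. Applying this twist $k$ times to $t$ then yields $t'\in[t]$ whose bridging arcs realize the minimal winding configuration with minimum $0$; by the same difference-at-most-$1$ constraint applied to $t'$, this $t'$ is the sought small triangulation.

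The main obstacle is the second step: making rigorous that the $2\pi$ Dehn twist shifts the winding numbers of all bridging arcs uniformly by $1$ and does so on the nose with respect to the length/winding dictionary in the cover. Concretely, the care goes into (i) exhibiting an explicit representative shear and verifying it is isotopic, rel boundary marked points, to the Dehn twist, and (ii) checking that the uniform horizontal shift induces $\lambda\mapsto\lambda-1$ exactly, including for chords that are nearly vertical or nearly horizontal, so that no bridging arc is accidentally converted to or from an exterior arc and the image is again a maximal collection of noncrossing arcs. Once this dictionary is pinned down, the reduction of the minimal winding number to $0$, and hence the existence of a small triangulation in $[t]$, is immediate; I would also note in passing that the exterior arcs of $t'$ automatically satisfy the winding bound of the remark, since $t'$ is itself an honest triangulation.
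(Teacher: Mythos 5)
Your proof has a genuine gap, and it sits exactly where the paper's proof does its real work. The central claim --- that a $2\pi$ Dehn twist of the inner boundary shifts the winding number of \emph{every} bridging arc by a common $\pm 1$ --- is false under the conventions of Definition \ref{defn: arcs on annulus}. In this paper winding numbers are not signed integers attached to arcs oriented once and for all: an arc is recorded as $a(x,y)[\lambda]$ with the travel direction built into the notation, and $\lambda$ is essentially $\lfloor |d|\rfloor$, where $d$ is the signed horizontal displacement of a lift. What the twist shifts uniformly is the displacement, $d \mapsto d-1$, not $\lambda$. Concretely, an arc with $d \in (0,1)$ (winding number $0$) is sent to an arc with $d \in (-1,0)$, which still has winding number $0$ --- it is rewritten $a(y,x)[0]$ --- while an arc with $d \in (1,2)$ (winding number $1$) drops to winding number $0$. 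This non-uniformity is not a defect; it is the only reason the lemma can be true. In your framework, where winding numbers shift uniformly, a triangulation whose bridging arcs have winding numbers $\{k,k+1\}$ could never be made small by twisting, since $\{0,1\}$ or $\{-1,0\}$ is the best you could reach; and indeed your last sentence, asserting that minimum $0$ together with the difference-at-most-$1$ constraint forces all winding numbers to be $0$, is a non sequitur --- that constraint permits $\{0,1\}$, which is not small.

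The case your argument silently skips, mixed winding numbers $\{0,1\}$ after normalizing one arc, is precisely what the paper's proof addresses. There one fixes the winding-$1$ arc $a(a,b)$, passes to the parallelogram in the universal cover between two consecutive lifts of it, and shows that every other bridging arc has a lift of positive slope there (its endpoints being trapped to the left of $b''$ and to the right of $a'$), hence has displacement in $(0,1)$. Consequently a single counter-clockwise $2\pi$ twist sends $a(a,b)[1]$ to $a(a,b)[0]$ and each $a(x,y)[0]$ to $a(y,x)[0]$: all bridging arcs land at winding number $0$ simultaneously. If you want to salvage your displacement/shear picture (which is correct as far as it goes), the missing step is exactly this positivity statement: in a triangulation containing bridging arcs of winding numbers $0$ and $1$, all bridging displacements have the same sign, so they lie in $(0,2)$ and one downward twist puts them all in $(-1,1)$. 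Without that step there is no proof.
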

\begin{proof}
 Pick an arbitrary triangulation $t\in [t]$. By Lemma \ref{cor: existence of bridging arc}, there exists a bridging arc. Pick an arbitrary bridging arc and apply $2\pi$ Dehn twists until we obtain a new triangulation in which this arc has winding number 0. If this triangulation is small we are done, if not, there exists an additional bridging arc $a(a,b)$ with winding number $\pm 1$. Consider the preimage of this triangulation on the universal cover. 
    
    Using Lemma \ref{Shifting Lemma} this bridging arc with winding number $\pm 1$ is homotopic to a line of either a positive or negative slope with length greater than $\sqrt{2}$ and less than $2\sqrt{2}$ on the universal cover. Suppose its slope is positive, so the winding number is 1, as the case where it is negative is proved similarly. 

    Consider the subset of the universal cover lying entirely between two consecutive lifts of the bridging arc $a(a,b)$. There are two representatives of $b$ in this parallelogram, $b'$ and $b''=b'+(1,0)$, and two representatives of $a$, namely $a'$ and $a''=a'+(1,0)$. Note that since the winding number of $a(a,b)$ is $1$, we have $(b'')_x<(a')_x$. Consider the line in this parallelogram that is the preimage of an arbitrary bridging arc connecting the marked points $x,y$ in the triangulation containing $a(a,b)$. The representative of $x$ is then to the left of $b''$, and the representative of $y$ is to the right of $a'$, so that our line has positive slope, so that this bridging arc is given by $a(x,y)[0]$.  
    
    Applying a counter-clockwise $2\pi$ Dehn twist of the inner boundary component (in the negative case this would be a clockwise $2\pi$ Dehn twist of the inner boundary component), $a(a,b)[1] \mapsto a(a,b)[0]$, while $a(x,y)[0] \mapsto a(y,x)[0]$. Since $a(x,y)$ was arbitrary, the winding number of all arcs of the triangulation obtained by this counter-clockwise $2\pi$ Dehn twist is 0.
\end{proof}

%In order to start counting, begin with vertex $a_1$ on the outer boundary component and denote it by $a_1$. We then call $a_{(i+1)modN} \vcentcolon = Suc(a_{(i)modN})$, where $Suc(x)$ gives the successor of $x$ i.e. the vertex immediately clockwise from $x$, where we also denote $a_0$ by $a_N$ {\color{blue} do you mean $n$, the number of vertices on the outer boundary? If so you should state that we are working with $A_{n,m}$}.%
We will use the convention of calling $b_m $ the marked point on the inside circle that is immediately counter-clockwise from $a_1$. Formally, after fixing a representative $a'_1\in \mathbb{R} \times [0,1]$ of $a_1$, pick a representative $b'_m$ which is immediately to the left of $a'_1$ which is possible by the natural ordering of the reals, and denote $b_m=p(b'_m)$. Inductively we now have a naming as before, where $b_{(i+1)\bmod m} \vcentcolon = Suc(b_{(i)\bmod m})$ is the vertex immediately clockwise from $b_{i}$. We will also introduce the notion of a fundamental date line that will play a key role in our proofs. We construct it as follows. On the outer boundary component we place a point $a$ between the marked points $a_0=a_n$ and $a_1$. Similarly, we place a point $b$ on the inner boundary component between the marked points $b_m=b_0$ and $b_1$. We can choose them to be equidistant from the marked points they were placed in between so that on the universal cover their cosets correspond to $\{\frac{0.5}{n}+\mathbb{Z}\} \times \{1\}$ and $\{\frac{0.5}{m}+\mathbb{Z}\}\times \{0\} $ respectively. We call the line connecting these marked points the \textbf{fundamental date line} (fdl). 
On the annulus, for $1\leq i \leq n,1\leq j \leq m, 0\leq k \leq m-1$ such that $j\leq (m-k)\bmod m$, we can now define the bridging arc $\alpha_{m-k}^i$ to be the clockwise arc connecting the vertex $b_{m-k}$ with $a_i$ thus crossing the fdl once, the bridging arc $\beta_j^i$ to be the arc with endpoints $a_i$ and $b_j$ not crossing the fdl, and the boundary arc $\gamma_{m-k,j}$ to be the arc connecting $b_{m-k}$ with $b_j$. Formally, in light of Definition \ref{defn: arcs on annulus}, we have that $\alpha_{m-k}^i=a(b_{m-k},a_i)$. On the other hand, $\beta_{j}^i$ is either $a(b_j,a_i)$ or $a(a_i,b_j)$, depending on which one of those does not cross the fdl. Since they form a loop one of them will always cross the fdl, and the other will not. The exterior arc is always of the form $\gamma_{m-k,j}=a(b_{m-k},b_j)$. We denote by $S_{(a_i,b_{m-k},b_j)}$ the set of all triangulations containing arcs $\alpha_{m-k}^i,\beta_{j}^i,\gamma_{m-k,j}$, which we will in turn call the \textbf{defining arcs} of that set since they are uniquely determined by the choice of $i,j,k$. It is important to note that the order of the vertices in the notation of $S_{(-,-,-)}$ is important. The arc whose end points are the first and second vertices is the one crossing the fdl.

\begin{rem}
    Note that the defining arcs $\alpha$ have winding number either 0 or 1, while the arcs $\beta$ always have winding number 0. Also, $\gamma$ has winding number 1 if and only if it is a loop, otherwise it has winding number 0.
\end{rem}

%Pick the representatives $a'_1,b'_m$ immediately to the right and to the left of the f.d.l, which is possible due to the natural ordering of the reals. Consider the line $\alpha_m^1$ connecting these vertices. Consider also the line $\beta_1^1$ connecting $a'_1$ with the representative of $b_1$ immediately to the right of the fundamental date line. We also consider the arc $\gamma_{m,1}$ connecting $b'_m$ with $b'_1$ which in this case is homotopic to part of the boundary. We construct the set $S_{a_1,b_m,b_1}$ to be the set of all triangulations of the plane $\mathbb{R} \times [0,1]$ containing arcs $\alpha_{m}^1,\beta_{1}^1,\gamma_{m,n}$ {\color{red} something something dehn twist and the$H_pq$ group mentioned in \ref{automorphism} and $a_i$ corresponding to dehn twist automorphism help @Ray}%

\begin{exmp}
    If we let $n=4, m=5$, the set $S_{(a_2,b_3,b_1)}$ will contain the triangulations containing the blue and orange arcs (as well as the unique implicit arc between $b_3$ and $b_1$ such that it forms a triangle with the blue and orange arc) in the following picture The dashed red line represents the fdl. The cardinality of $S_{(a_2,b_3,b_1)}$ will count the number of triangulations that contain the blue and orange arcs. 
    
\begin{center}
\tikzset{every picture/.style={line width=0.75pt}} %set default line width to 0.75pt 

\begin{tikzpicture}[x=0.75pt,y=0.75pt,yscale=-1,xscale=1]
%uncomment if require: \path (0,203); %set diagram left start at 0, and has height of 203

%Shape: Donut [id:dp1479488265161102] 
\draw   (87.78,97.47) .. controls (87.78,86.26) and (96.86,77.17) .. (108.07,77.17) .. controls (119.28,77.17) and (128.37,86.26) .. (128.37,97.47) .. controls (128.37,108.68) and (119.28,117.76) .. (108.07,117.76) .. controls (96.86,117.76) and (87.78,108.68) .. (87.78,97.47)(57.33,97.47) .. controls (57.33,69.45) and (80.05,46.73) .. (108.07,46.73) .. controls (136.09,46.73) and (158.81,69.45) .. (158.81,97.47) .. controls (158.81,125.49) and (136.09,148.2) .. (108.07,148.2) .. controls (80.05,148.2) and (57.33,125.49) .. (57.33,97.47) ;
%Straight Lines [id:da5665143319255947] 
\draw    (72.69,61.84) ;
\draw [shift={(72.69,61.84)}, rotate = 0] [color={rgb, 255:red, 0; green, 0; blue, 0 }  ][fill={rgb, 255:red, 0; green, 0; blue, 0 }  ][line width=0.75]      (0, 0) circle [x radius= 3.35, y radius= 3.35]   ;
\draw [shift={(72.69,61.84)}, rotate = 0] [color={rgb, 255:red, 0; green, 0; blue, 0 }  ][fill={rgb, 255:red, 0; green, 0; blue, 0 }  ][line width=0.75]      (0, 0) circle [x radius= 3.35, y radius= 3.35]   ;
%Straight Lines [id:da06909676148078758] 
\draw    (144.17,61.84) ;
\draw [shift={(144.17,61.84)}, rotate = 0] [color={rgb, 255:red, 0; green, 0; blue, 0 }  ][fill={rgb, 255:red, 0; green, 0; blue, 0 }  ][line width=0.75]      (0, 0) circle [x radius= 3.35, y radius= 3.35]   ;
\draw [shift={(144.17,61.84)}, rotate = 0] [color={rgb, 255:red, 0; green, 0; blue, 0 }  ][fill={rgb, 255:red, 0; green, 0; blue, 0 }  ][line width=0.75]      (0, 0) circle [x radius= 3.35, y radius= 3.35]   ;
%Straight Lines [id:da8103065862219896] 
\draw [color={rgb, 255:red, 0; green, 0; blue, 0 }  ,draw opacity=1 ]   (87.78,97.47) ;
\draw [shift={(87.78,97.47)}, rotate = 0] [color={rgb, 255:red, 0; green, 0; blue, 0 }  ,draw opacity=1 ][fill={rgb, 255:red, 0; green, 0; blue, 0 }  ,fill opacity=1 ][line width=0.75]      (0, 0) circle [x radius= 3.35, y radius= 3.35]   ;
\draw [shift={(87.78,97.47)}, rotate = 0] [color={rgb, 255:red, 0; green, 0; blue, 0 }  ,draw opacity=1 ][fill={rgb, 255:red, 0; green, 0; blue, 0 }  ,fill opacity=1 ][line width=0.75]      (0, 0) circle [x radius= 3.35, y radius= 3.35]   ;
%Straight Lines [id:da6966280250749886] 
\draw    (108.07,77.17) ;
\draw [shift={(108.07,77.17)}, rotate = 0] [color={rgb, 255:red, 0; green, 0; blue, 0 }  ][fill={rgb, 255:red, 0; green, 0; blue, 0 }  ][line width=0.75]      (0, 0) circle [x radius= 3.35, y radius= 3.35]   ;
\draw [shift={(108.07,77.17)}, rotate = 0] [color={rgb, 255:red, 0; green, 0; blue, 0 }  ][fill={rgb, 255:red, 0; green, 0; blue, 0 }  ][line width=0.75]      (0, 0) circle [x radius= 3.35, y radius= 3.35]   ;
%Straight Lines [id:da05069714126185043] 
\draw    (128.37,97.47) ;
\draw [shift={(128.37,97.47)}, rotate = 0] [color={rgb, 255:red, 0; green, 0; blue, 0 }  ][fill={rgb, 255:red, 0; green, 0; blue, 0 }  ][line width=0.75]      (0, 0) circle [x radius= 3.35, y radius= 3.35]   ;
\draw [shift={(128.37,97.47)}, rotate = 0] [color={rgb, 255:red, 0; green, 0; blue, 0 }  ][fill={rgb, 255:red, 0; green, 0; blue, 0 }  ][line width=0.75]      (0, 0) circle [x radius= 3.35, y radius= 3.35]   ;
%Straight Lines [id:da06934352937317234] 
\draw    (144.05,133.33) ;
\draw [shift={(144.05,133.33)}, rotate = 0] [color={rgb, 255:red, 0; green, 0; blue, 0 }  ][fill={rgb, 255:red, 0; green, 0; blue, 0 }  ][line width=0.75]      (0, 0) circle [x radius= 3.35, y radius= 3.35]   ;
\draw [shift={(144.05,133.33)}, rotate = 0] [color={rgb, 255:red, 0; green, 0; blue, 0 }  ][fill={rgb, 255:red, 0; green, 0; blue, 0 }  ][line width=0.75]      (0, 0) circle [x radius= 3.35, y radius= 3.35]   ;
%Straight Lines [id:da9294942546573051] 
\draw    (72.81,133.09) ;
\draw [shift={(72.81,133.09)}, rotate = 0] [color={rgb, 255:red, 0; green, 0; blue, 0 }  ][fill={rgb, 255:red, 0; green, 0; blue, 0 }  ][line width=0.75]      (0, 0) circle [x radius= 3.35, y radius= 3.35]   ;
\draw [shift={(72.81,133.09)}, rotate = 0] [color={rgb, 255:red, 0; green, 0; blue, 0 }  ][fill={rgb, 255:red, 0; green, 0; blue, 0 }  ][line width=0.75]      (0, 0) circle [x radius= 3.35, y radius= 3.35]   ;
%Straight Lines [id:da14461777788346208] 
\draw    (117.19,115.84) ;
\draw [shift={(117.19,115.84)}, rotate = 0] [color={rgb, 255:red, 0; green, 0; blue, 0 }  ][fill={rgb, 255:red, 0; green, 0; blue, 0 }  ][line width=0.75]      (0, 0) circle [x radius= 3.35, y radius= 3.35]   ;
\draw [shift={(117.19,115.84)}, rotate = 0] [color={rgb, 255:red, 0; green, 0; blue, 0 }  ][fill={rgb, 255:red, 0; green, 0; blue, 0 }  ][line width=0.75]      (0, 0) circle [x radius= 3.35, y radius= 3.35]   ;
%Curve Lines [id:da6580429526568059] 
\draw [color={rgb, 255:red, 255; green, 125; blue, 0 }  ,draw opacity=1 ]   (144.17,61.84) .. controls (124.17,64.64) and (117.2,62.8) .. (108.07,77.17) ;
%Curve Lines [id:da7433052301515595] 
\draw [color={rgb, 255:red, 0; green, 0; blue, 255 }  ,draw opacity=1 ]   (144.17,61.84) .. controls (86.7,44.31) and (65.21,80.83) .. (67.6,104) .. controls (70.04,127.66) and (97.19,137.95) .. (117.19,115.84) ;
%Straight Lines [id:da7208128789133836] 
\draw [color={rgb, 255:red, 0; green, 0; blue, 0 }  ,draw opacity=1 ]   (99.43,115.6) ;
\draw [shift={(99.43,115.6)}, rotate = 0] [color={rgb, 255:red, 0; green, 0; blue, 0 }  ,draw opacity=1 ][fill={rgb, 255:red, 0; green, 0; blue, 0 }  ,fill opacity=1 ][line width=0.75]      (0, 0) circle [x radius= 3.35, y radius= 3.35]   ;
\draw [shift={(99.43,115.6)}, rotate = 0] [color={rgb, 255:red, 0; green, 0; blue, 0 }  ,draw opacity=1 ][fill={rgb, 255:red, 0; green, 0; blue, 0 }  ,fill opacity=1 ][line width=0.75]      (0, 0) circle [x radius= 3.35, y radius= 3.35]   ;
%Image [id:dp8970360055578774] 
\draw (110.4,89.2) node  {$b_1$};
%Image [id:dp20545531414253548] 
\draw (62.7,48.5) node  {$a_1$};
%Image [id:dp4717379408016258] 
\draw (99,99) node  {$b_5$};
%Straight Lines [id:da5665143319255947] 
\draw [color={rgb, 255:red, 208; green, 2; blue, 27 }  ,draw opacity=1 ][fill={rgb, 255:red, 208; green, 2; blue, 27 }  ,fill opacity=1 ]   (93.69,82.84) ;
\draw [shift={(93.69,82.84)}, rotate = 0] [color={rgb, 255:red, 208; green, 2; blue, 27 }  ,draw opacity=1 ][fill={rgb, 255:red, 208; green, 2; blue, 27 }  ,fill opacity=1 ][line width=0.75]      (0, 0) circle [x radius= 3.35, y radius= 3.35]   ;
\draw [shift={(93.69,82.84)}, rotate = 0] [color={rgb, 255:red, 208; green, 2; blue, 27 }  ,draw opacity=1 ][fill={rgb, 255:red, 208; green, 2; blue, 27 }  ,fill opacity=1 ][line width=0.75]      (0, 0) circle [x radius= 3.35, y radius= 3.35]   ;
%Curve Lines [id:da007138976540878694] 
\draw [color={rgb, 255:red, 208; green, 2; blue, 27 }  ,draw opacity=1 ] [dash pattern={on 4.5pt off 4.5pt}]  (93.69,82.84) .. controls (95.25,81) and (67.25,72.5) .. (57.33,97.47) ;
%Straight Lines [id:da18454728471681925] 
\draw [color={rgb, 255:red, 208; green, 2; blue, 27 }  ,draw opacity=1 ][fill={rgb, 255:red, 208; green, 2; blue, 27 }  ,fill opacity=1 ]   (57.19,97.34) ;
\draw [shift={(57.19,97.34)}, rotate = 0] [color={rgb, 255:red, 208; green, 2; blue, 27 }  ,draw opacity=1 ][fill={rgb, 255:red, 208; green, 2; blue, 27 }  ,fill opacity=1 ][line width=0.75]      (0, 0) circle [x radius= 3.35, y radius= 3.35];

\end{tikzpicture}
\end{center}
\end{exmp}

 It is possible to define these sets inductively. We start with $S_{(a_1,b_1,b_m)}$, which contains the arcs $\alpha = a(b_m,a_1)$ and $\beta = a(a_1,b_1)$. We can obtain from this $S_{(a_1,b_2,b_m)}$ by taking $\beta$ and changing it to become an arc from $a_1$ to $b_2$. The set $S_{(a_1,b_{m-1},b_1)}$ can be obtained similarly. The set $S_{(a_2,b_1,b_2)}$ is obtained from $S_{(a_1,b_1,b_2)}$ by a Dehn Twist of the outside circle. We can inductively continue. If we have $S_{(a_i,b_j,b_k)}$, $S_{(a_i,b_j,b_{k+1})}$ is obtained by removing whichever arc $a(a_i,b_k)$ or $a(b_k,a_i)$ in $S_{(a_i,b_j,b_k)}$ and adding the unique bridging arc connecting $a_i$ and $b_{k+1}$ which does not intersect, nor is homotopic to, whichever arc $a(a_i,b_j)$ or $a(b_j,a_i)$ is in  $S_{(a_i,b_j,b_k)}$. Similarly the set  $S_{(a_i,b_{j+1},b_k)}$ is obtained. Again, the set $S_{(a_{i+1},b_j,b_k)}$ is obtained by a Dehn twist of the outside boundary component. This iterative geometric process, which we will call our \textbf{counting method}, is illustrated in Figure $\ref{process}$. 

\begin{figure}[h!]
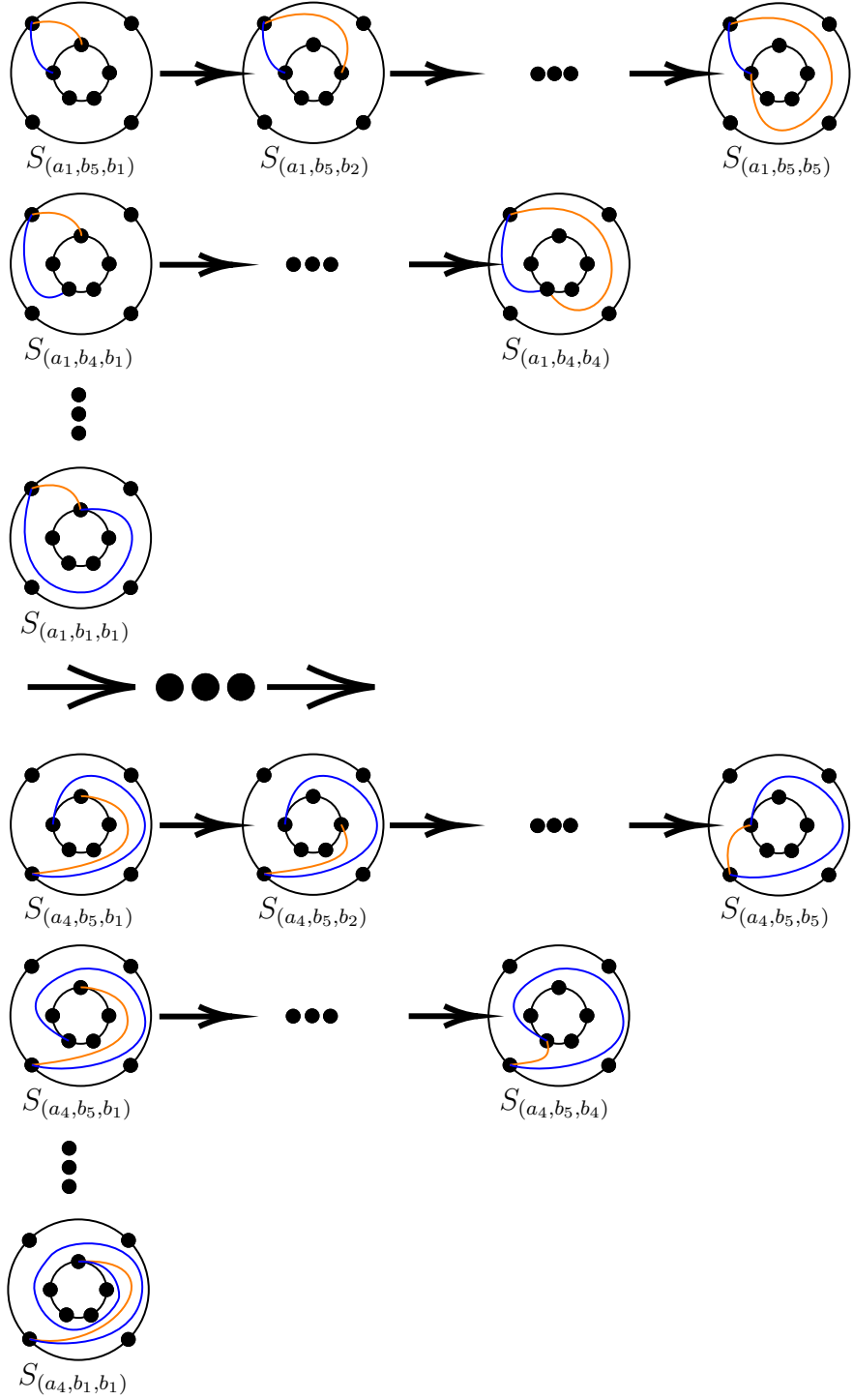

\begin{center}

\tikzset{every picture/.style={line width=0.75pt}} %set default line width to 0.75pt        

% [inline block 0: 1 envs, 80831 chars -> data_tex | \begin{tikzpicture}[x=0.75pt,y=0.75pt,yscale=-1,xscale=1] %uncomment if require: \path (0,753); %set diagram left start ...]


\end{center}
\caption{Constructions of the sets $S_{(-,-,-)}$}
\label{process}
\end{figure}

The set of all triangulations of an annulus is infinite, as even from a single triangulation, infinitely many can be generated by applying dehn twists.  However, in this paper we will show that modulo  $2\pi$ Dehn twists, there are finitely many families of triangulations. We denote by $[t]$ the family of triangulations obtained by $2\pi$ Dehn twists of a triangulation $t$. We will show that each family of triangulations has a unique representative lying in one of our sets $S_{(-,-,-)}$ constructed above so that the cardinality of the union of all of our constructed sets will count the number of families. These statements will be presented as individual lemmas, that when combined will prove the Theorem \ref{finalresult}. We begin with the following lemma.
\begin{lem}
The number of triangulations that this method counts is given by: \\  $n\big(\sum_{i=0}^{m-1}(i+1)C_iC_{n+m-i-1})$.
\end{lem}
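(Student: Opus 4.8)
The plan is to compute the raw total $\sum_{i,j,k}\lvert S_{(a_i,b_{m-k},b_j)}\rvert$ and identify it with the stated expression. I would first strip off the outer vertex. The construction already records that $S_{(a_{i+1},b_j,b_k)}$ is produced from $S_{(a_i,b_j,b_k)}$ by one $2\pi$ Dehn twist of the outer boundary; since such a twist is a homeomorphism of the annulus it restricts to a bijection between the two sets, so their cardinalities agree. Consequently the total is $n$ copies of the total over triples with a single fixed outer vertex $a_i$, which accounts for the factor $n$, and it remains to evaluate $\sum_{j,k}\lvert S_{(a_i,b_{m-k},b_j)}\rvert$.

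For a fixed admissible triple I would cut the annulus open along the bridging arc $\alpha=\alpha_{m-k}^i$. A single bridging arc carries a generator of $\pi_1$, so the cut turns the annulus into a disk $\Pi$ whose boundary meets each marked point once, except that $a_i$ and $b_{m-k}$ are each split in two; thus $\Pi$ is a convex $(n+m+2)$-gon, and triangulations of the annulus containing $\alpha$ correspond to triangulations of $\Pi$. Inside $\Pi$ the three defining arcs bound a triangle $T$ on the vertices $a_i,b_{m-k},b_j$ in which $\alpha$ appears as a boundary edge while $\beta=\beta_j^i$ and $\gamma=\gamma_{m-k,j}$ are diagonals. The standard fact that fixing a triangle with one boundary edge and two diagonals inside a convex polygon makes the number of completing triangulations equal to the product of the triangulation counts of the two sub-polygons the diagonals cut off then gives $\lvert S_{(a_i,b_{m-k},b_j)}\rvert=C_s\,C_{n+m-s-1}$, where $s$ is the number of inner marked points lying in the sub-polygon cut off by $\gamma$ (the two indices sum to $n+m-1$ because $\Pi$ has $n+m+2$ vertices).

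To pin down $s$ I would use a crossing-parity argument against the fundamental date line. The boundary of $T$ is contractible, so it meets the fdl an even number of times; by construction $\alpha$ meets it once and $\beta$ not at all, forcing $\gamma$ to cross the fdl exactly once. Hence the sub-polygon cut off by $\gamma$ is the one whose inner-boundary arc sweeps past the fdl, and with $l=m-k$ its interior contains the vertices $b_{l+1},\dots,b_{j-1}$ read cyclically through $b_m,b_1$, so $s=(m-l)+(j-1)$; the loop case $j=l$ fits this as $\gamma$ with winding number one enclosing all remaining inner vertices, giving $s=m-1$. Finally, for each fixed $s\in\{0,\dots,m-1\}$ I would count the admissible pairs with $1\le j\le l\le m$ and $(m-l)+(j-1)=s$: solving $l=m+j-1-s$ subject to $l\le m$ leaves exactly $j\in\{1,\dots,s+1\}$, so there are $s+1$ of them. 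Summing $C_s C_{n+m-s-1}$ with this multiplicity and restoring the factor $n$ yields $n\sum_{s=0}^{m-1}(s+1)C_sC_{n+m-s-1}$, with the identity $\sum_{s=0}^{m-1}(s+1)=\binom{m+1}{2}$ serving as a check that every admissible pair is accounted for exactly once.

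The step I expect to be the main obstacle is the geometric bookkeeping in the second and third paragraphs: verifying rigorously that the three defining arcs cut $\Pi$ into exactly the triangle $T$ plus two sub-polygons with $\alpha$ a boundary edge, and correctly identifying the side $\gamma$ bounds. The latter is the genuine crux, since the fdl-crossing side produces the multiplicity $s+1$ needed for the formula while the opposite side would give $m-s-1$ and fail; the degenerate extremes—$s=0$, where $\gamma$ collapses onto a boundary segment and the inner factor is the empty $C_0=1$, and the loop case at $s=m-1$—must each be checked to slot into this count correctly.
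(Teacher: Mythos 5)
Your proof is correct and follows essentially the same route as the paper's: cutting the annulus along $\alpha$ reproduces exactly the paper's fundamental-domain picture in the universal cover, your triangle $T$ together with the two sub-polygons cut off by $\gamma$ and $\beta$ are precisely the paper's regions $R_1$ and $R_2$ with the same Catalan-product count $C_sC_{n+m-s-1}$, and your direct count of $s+1$ admissible pairs $(j,l)$ for each fixed $s$ is the same computation packaged in the paper as Lemma \ref{singlesum}. The only ingredient of the paper's argument you leave implicit is that the sets $S_{(-,-,-)}$ are pairwise disjoint (the paper's Lemma \ref{disjoint}), which is what lets the raw total $\sum_{i,j,k}\lvert S_{(a_i,b_{m-k},b_j)}\rvert$ be read as a count of distinct triangulations rather than a count with multiplicity.
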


\begin{proof}
Let us pick an arbitrary set $S_{(a_k,b_{m-i},b_{j+1})}$. We can visually interpret the set as the set of triangulations of Figure \ref{typingissue}. %The numbering of the rest of the vertices is now forced. For an arbitrary $k\in {1,...,n}$, we can find the number of triangulations on the $k^{th}$ iteration of our described process, when the vertex $a_k$ is connected to vertices $b_{(1-j)\text{mod } m}$ and $b_{(2+i)\text{mod } m}$. We have the following diagram: {\color{blue} need to fix label placement}%

\tikzset{every picture/.style={line width=0.75pt}} %set default line width to 0.75pt        
\begin{figure}
\begin{tikzpicture}[x=0.75pt,y=0.75pt,yscale=-1,xscale=1]
%uncomment if require: \path (0,300); %set diagram left start at 0, and has height of 300

%Shape: Parallelogram [id:dp3507122532898983] 
\draw   (220.5,71.8) -- (508.2,71.8) -- (384.9,193.8) -- (97.2,193.8) -- cycle ;
%Straight Lines [id:da22233467868662804] 
\draw    (589,71.4) -- (104.6,71.4) -- (79,71.4) ;
%Straight Lines [id:da9155448776047921] 
\draw    (549,193.8) -- (39,194.2) ;
%Straight Lines [id:da19745245950433654] 
\draw    (220.5,71.8) ;
\draw [shift={(220.5,71.8)}, rotate = 0] [color={rgb, 255:red, 0; green, 0; blue, 0 }  ][fill={rgb, 255:red, 0; green, 0; blue, 0 }  ][line width=0.75]      (0, 0) circle [x radius= 2.34, y radius= 2.34]   ;
\draw [shift={(220.5,71.8)}, rotate = 0] [color={rgb, 255:red, 0; green, 0; blue, 0 }  ][fill={rgb, 255:red, 0; green, 0; blue, 0 }  ][line width=0.75]      (0, 0) circle [x radius= 2.34, y radius= 2.34]   ;
%Straight Lines [id:da46443641163683225] 
\draw    (508.2,71.8) ;
\draw [shift={(508.2,71.8)}, rotate = 0] [color={rgb, 255:red, 0; green, 0; blue, 0 }  ][fill={rgb, 255:red, 0; green, 0; blue, 0 }  ][line width=0.75]      (0, 0) circle [x radius= 2.34, y radius= 2.34]   ;
\draw [shift={(508.2,71.8)}, rotate = 0] [color={rgb, 255:red, 0; green, 0; blue, 0 }  ][fill={rgb, 255:red, 0; green, 0; blue, 0 }  ][line width=0.75]      (0, 0) circle [x radius= 2.34, y radius= 2.34]   ;
%Straight Lines [id:da5858397016924466] 
\draw    (97.2,193.8) ;
\draw [shift={(97.2,193.8)}, rotate = 0] [color={rgb, 255:red, 0; green, 0; blue, 0 }  ][fill={rgb, 255:red, 0; green, 0; blue, 0 }  ][line width=0.75]      (0, 0) circle [x radius= 2.34, y radius= 2.34]   ;
\draw [shift={(97.2,193.8)}, rotate = 0] [color={rgb, 255:red, 0; green, 0; blue, 0 }  ][fill={rgb, 255:red, 0; green, 0; blue, 0 }  ][line width=0.75]      (0, 0) circle [x radius= 2.34, y radius= 2.34]   ;
%Straight Lines [id:da4019721099442166] 
\draw    (384.9,193.8) ;
\draw [shift={(384.9,193.8)}, rotate = 0] [color={rgb, 255:red, 0; green, 0; blue, 0 }  ][fill={rgb, 255:red, 0; green, 0; blue, 0 }  ][line width=0.75]      (0, 0) circle [x radius= 2.34, y radius= 2.34]   ;
\draw [shift={(384.9,193.8)}, rotate = 0] [color={rgb, 255:red, 0; green, 0; blue, 0 }  ][fill={rgb, 255:red, 0; green, 0; blue, 0 }  ][line width=0.75]      (0, 0) circle [x radius= 2.34, y radius= 2.34]   ;
%Image [id:dp5268538115321135] 
\draw (215.8,53.6) node  [anchor=north west][inner sep=0.75pt] {${a_k}$};
%Image [id:dp7921522135951649] 
\draw (502.8,50) node  [anchor=north west][inner sep=0.75pt] {${a_k+(1,0)}$};
%Image [id:dp9952300717918732] 
\draw (188.7,211.1) node {$b_m$};
\draw (200,142) node {$\gamma$};
%Straight Lines [id:da8571975553984543] 
\draw    (187.6,193.8) ;
\draw [shift={(187.6,193.8)}, rotate = 0] [color={rgb, 255:red, 0; green, 0; blue, 0 }  ][fill={rgb, 255:red, 0; green, 0; blue, 0 }  ][line width=0.75]      (0, 0) circle [x radius= 2.34, y radius= 2.34]   ;
\draw [shift={(187.6,193.8)}, rotate = 0] [color={rgb, 255:red, 0; green, 0; blue, 0 }  ][fill={rgb, 255:red, 0; green, 0; blue, 0 }  ][line width=0.75]      (0, 0) circle [x radius= 2.34, y radius= 2.34]   ;
%Straight Lines [id:da26423752514859333] 
\draw    (233.6,193.4) ;
\draw [shift={(233.6,193.4)}, rotate = 0] [color={rgb, 255:red, 0; green, 0; blue, 0 }  ][fill={rgb, 255:red, 0; green, 0; blue, 0 }  ][line width=0.75]      (0, 0) circle [x radius= 2.34, y radius= 2.34]   ;
\draw [shift={(233.6,193.4)}, rotate = 0] [color={rgb, 255:red, 0; green, 0; blue, 0 }  ][fill={rgb, 255:red, 0; green, 0; blue, 0 }  ][line width=0.75]      (0, 0) circle [x radius= 2.34, y radius= 2.34]   ;
%Image [id:dp5779590555037484] 
\draw (232.4,210.2) node  {$b_1$};
%Straight Lines [id:da6810341308548029] 
\draw    (279.2,193.4) ;
\draw [shift={(279.2,193.4)}, rotate = 0] [color={rgb, 255:red, 0; green, 0; blue, 0 }  ][fill={rgb, 255:red, 0; green, 0; blue, 0 }  ][line width=0.75]      (0, 0) circle [x radius= 2.34, y radius= 2.34]   ;
\draw [shift={(279.2,193.4)}, rotate = 0] [color={rgb, 255:red, 0; green, 0; blue, 0 }  ][fill={rgb, 255:red, 0; green, 0; blue, 0 }  ][line width=0.75]      (0, 0) circle [x radius= 2.34, y radius= 2.34]   ;
%Image [id:dp7433719608436729] 
\draw (285.7,209.4) node  {$b_{j+1}$};
%Image [id:dp32576698264903015] 
\draw (95.5,209.7) node  {$b_{m-i}$};
%Straight Lines [id:da7302098008827089] 
\draw    (220.5,71.8) -- (279.2,193.4) ;
%Curve Lines [id:da3155385234034087] 
\draw    (97.2,193.8) .. controls (137.2,163.8) and (215.8,110.6) .. (279.2,193.4) ;
%R1
\draw (185,160) node [anchor=north west][inner sep=0.75pt]    {$R_1$};
%R2
\draw (340,125) node  {$R_2$};
%Image [id:dp4437868302542405] 
89op 00

\end{tikzpicture}
    \centering
    \caption{Intersection of lifts of elements of $S_{(a_k,b_{m-i},b_{j+1})}$}
    \label{typingissue}
    
\end{figure}
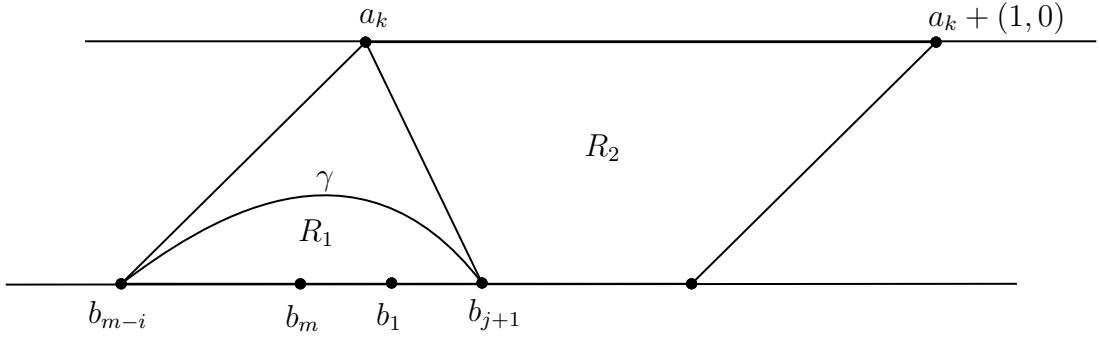

Note that it could be the case that $b_{m-i}=b_m$ and/or $b_{j+1}=b_1$, as well as that the cord connecting $a_k$ with $b_{j+1}$ might have positive slope. We must triangulate $R_1$ and $R_2$ in Figure \ref{typingissue}. We now triangulate the region bounded by the $\mathbb{R} \times {0}$ and $\gamma$. This region has $i+j+2$ vertices, thus this amounts to counting the triangulations of a regular $(i+j+2)$-gon \cite{IgusaMaresca}, which has $C_{i+j}$ triangulations, see for instance \cite{catalannumbers}. We now consider $R_2$. We have $n+1$ top vertices as the representative $a_k$ is counted twice. If we let $x$ denote the number of vertices on the bottom for the region $R_2$, we can write the equation $(i+1)+(j+1)+x=m+2$ as the vertices $b_{m-i}$ and $b_{j+1}$ are counted twice. Thus we have $m-i-j$ vertices on the bottom. Whence we have a polygon with a total of $n+m-i-j+1$ vertices, hence $C_{n+m-(i+j)-1}$ triangulations. Since $R_1$ and $R_2$ are disjoint, the total number of triangulations of the union of these regions is given by $C_{i+j}C_{n+m-(i+j)-1}$ \cite{ruleofprod}, Section 5.1.

For a fixed $a_k$, let us count how many sets of form $S_{(a_k,-,-)}$ there are. We need to put constrains on $i,j$ such that $S_{a_k,b_{m-i},b_{j+1}}$ is indeed one of our defined sets.

 Considering the vertices $b_{m-i}, b_{j+1}$, by the construction of our sets, we must have that $1\leq i+j+1 \leq m$, that is $0\leq i+j\leq m-1$. We can now construct the set $S$ of allowed tuples, $S:=\{ (i,j) \big| i+j\leq m-1 \} = \cup_{j=0}^{m-1} \cup_{i=0}^{m-j-1} \{(i,j)\}$, and for each tuple $(i,j)$ we can associate the set $S_{a_k,b_{m-i},b_{j+1}}$. Observe Lemma \ref{disjoint} gives that for distinct tuples $(i,j),(i',j')$ the sets $S_{a_k,b_{m-i},b_{j+1}},S_{a_k,b_{m-i'},b_{j'+1}}$ are disjoint. 
 
 Thus for a fixed $a_k$, iterating over the allowed $i,j$, our counting method counts \\
$\big| \cup_{j=0}^{m-1} \cup_{i=0}^{m-j-1} S_{(a_k,b_{m-i},b_{j+1})} \big|= \sum_{j=0}^{m-1} \sum_{i=0}^{m-j-1} C_{i+j}C_{n+m-(i+j)-1} = \sum_{j=0}^{m-1} \sum_{i=j}^{m-1} C_{i}C_{n+m-i-1}$\ many triangulations, where the first equality follows by our above observation, we used Lemma \ref{disjoint} in the second equality, and in the last equality we changed the index $i\rightarrow i-j$.  Finally we obtain $T=\sum_{j=0}^{m-1} \sum_{i=j}^{m-1} C_{i}C_{n+m-i-1}=\sum_{l=0}^{m-1}(l+1)C_lC_{n+m-l-1}$ by Lemma \ref{singlesum}. Notice that $T$ is independent of $k$, and since we have $n$ vertices on the outside, we obtain $T(A_{n,m}) = \sum_{1}^{n} T = nT$, our desired result.

%triangulations, if we let $T_k$  denote the number of triangulations {\color{red}which contain a triangle with vertex $a_k$} {\color{blue} in what?}, then $T_k$ is given by $T_k= \big| \cup_{j=0}^{m-1} \cup_{i=0}^{m-j-1} S_{(a_k,b_{m-i},b_{j+1})} \big|= \sum_{j=0}^{m-1} \sum_{i=0}^{m-j-1} C_{i+j}C_{n+m-(i+j)-1} = \sum_{j=0}^{m-1} \sum_{i=j}^{m-1} C_{i}C_{n+m-i-1} $, where we used Lemma \ref{disjoint} in the second equality, and in the last equality we changed the index $i\rightarrow i-j$. Finally we obtain $T=\sum_{j=0}^{m-1} \sum_{i=j}^{m-1} C_{i}C_{n+m-i-1}=\sum_{l=0}^{m-1}(l+1)C(l)C(n+m-l-1)$ by Lemma \ref{singlesum}. Notice that $T$ is independent of $k$, and since we have $n$ vertices on the outside, we get $T(A_{n,m}) = \sum_{1}^{n} T = nT$, our desired result. {\color{blue} Might be useful to state that the above computation fixed an $a$ and we get one of them per $a$.}
\end{proof}

\begin{lem}\label{lem:surjectivity}
Given any triangulation $t$, then there exists a triangulation $t'\in [t]$ such that $\exists a_k,b_i,b_j$ with $t'\in S_{(a_k,b_i,b_j)}$.
\end{lem}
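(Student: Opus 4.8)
The plan is to produce, from any triangulation $t$, a representative $t'\in[t]$ that literally contains the three defining arcs $\alpha,\beta,\gamma$ of some set $S_{(a_k,b_i,b_j)}$. First I would invoke Lemma \ref{smalltriangulation} to replace $t$ by a \emph{small} representative $t'\in[t]$, so that every bridging arc of $t'$ has winding number $0$; this is what will let us control how many times the relevant arcs cross the fundamental date line. Working on the universal cover is convenient: lift $t'$ to a triangulation of the strip $\mathbb{R}\times[0,1]$ and let $\tilde f$ be the lift of the fdl running from the point $(\tfrac{0.5}{n},1)$ between $\tilde a_n$ and $\tilde a_1$ down to $(\tfrac{0.5}{m},0)$ between $\tilde b_m$ and $\tilde b_1$.

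Next I would locate the triangle $T_{def}$ that will carry the defining arcs. Follow $\tilde f$ starting from its inner endpoint and record the triangles of the lifted triangulation it passes through. As long as $\tilde f$ only crosses boundary chords, its current triangle has all three vertices on the inner boundary; since $\tilde f$ terminates on the outer boundary and there are finitely many triangles, there is a \emph{first} triangle $T_{def}$ along $\tilde f$ having a vertex $a_k$ on the outer boundary. Because $T_{def}$ is entered through a side with both endpoints on the inner boundary (a boundary arc $\gamma=a(b_i,b_j)$, or the degenerate segment $[b_m,b_1]$ when $\tilde f$ crosses a bridging chord immediately), its apex is $a_k$ and its remaining two sides are bridging arcs from $a_k$ to the inner vertices $b_i$ and $b_j$. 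The curve $\tilde f$ enters $T_{def}$ through $\gamma$ and must leave through exactly one of these two bridging sides on its way to the outer boundary; let $\alpha=a(b_i,a_k)$ be the side it crosses and $\beta$ the side (between $a_k$ and $b_j$, suitably oriented) that it does not.

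It then remains to check that $\alpha,\beta,\gamma$ are precisely the defining arcs of $S_{(a_k,b_i,b_j)}$. By construction $\beta$ does not meet $\tilde f$, so it is the bridging arc between $a_k$ and $b_j$ not crossing the fdl, while $\alpha$ does meet $\tilde f$. Here smallness pays off: since $\alpha$ is a simple bridging arc of winding number $0$ and $\tilde f$ is (isotopic to) a single lift, $\alpha$ crosses the fdl \emph{exactly once}, matching the requirement that the defining $\alpha$ cross the fdl once and have winding number $0$ or $1$. Finally $\gamma=a(b_i,b_j)$ is the base boundary arc cutting off the region $R_1$ that contains the inner endpoint of the fdl, which is exactly the defining $\gamma$. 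Hence $t'\in S_{(a_k,b_i,b_j)}$; one also checks the index inequality $j\le(m-k)\bmod m$ built into the definition, which holds because $\gamma$ runs clockwise from $b_i$ past the fdl endpoint to $b_j$.

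The main obstacle I anticipate is the existence and exact identification of $T_{def}$: one must argue carefully (best done on the cover) that following $\tilde f$ genuinely reaches a first outer-apex triangle, that the entering side really is a boundary arc equal to the defining $\gamma$ (including the degenerate $i=j=0$ case, where $\gamma$ collapses to the boundary segment, and the loop case), and---crucially---that in a small triangulation the crossed bridging side meets the fdl exactly once, so that the winding numbers agree with those of the defining arcs. These crossing-number bookkeeping facts, together with verifying the admissible range of the indices $(i,j,k)$, are where the real work lies; the remainder is a direct reading-off of the triangle $T_{def}$.
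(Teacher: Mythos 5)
Your construction of $T_{def}$ is sound, and it in fact recovers exactly the triangle the paper works with: its base is the maximal inner boundary arc crossing the fdl, and the crossing-number facts you defer (a bridging arc of winding number $0$ meets the fdl at most once, an exterior arc at most once, etc.) are all provable. The genuine gap is the final assertion that the index inequality $j\le(m-k)\bmod m$ ``holds because $\gamma$ runs clockwise from $b_i$ past the fdl endpoint to $b_j$.'' That inequality is not a formality: it encodes the requirement that in a defining triple the fdl-crossing bridging arc $\alpha$ must attach to the endpoint of $\gamma$ lying on the \emph{counter-clockwise} side of the fdl (the vertex written second in $S_{(-,-,-)}$), while the non-crossing arc $\beta$ attaches to the clockwise-side endpoint. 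Nothing in your construction controls through which of the two bridging sides of $T_{def}$ the fdl exits. If it exits through the side at the clockwise endpoint of $\gamma$, the triple you read off is the mirror configuration, which is the set of defining arcs of no $S_{(-,-,-)}$ whatsoever, and your argument stops there.

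This is not repairable by re-indexing, because the small representative $t'$ may genuinely lie in no set. Concretely, in $A_{1,2}$ take the small triangulation consisting of the arc from $b_2$ to $a_1$ that does not cross the fdl together with both winding-number-zero arcs from $b_1$ to $a_1$ (one crosses the fdl, the other does not). Here $T_{def}$ has base the boundary segment $[b_2,b_1]$, apex $a_1$, and the fdl exits through the side at $b_1$; your recipe outputs ``$S_{(a_1,b_1,b_2)}$,'' which violates $j\le(m-k)\bmod m$ (it would need $2\le 1$), and one checks directly that this triangulation contains the defining arcs of none of the legitimate sets $S_{(a_1,b_2,b_1)}$, $S_{(a_1,b_2,b_2)}$, $S_{(a_1,b_1,b_1)}$. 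This situation is precisely the second case in the paper's proof: when no bridging arc at $b_{m-k}$ crosses the fdl, the paper applies one further $2\pi$ Dehn twist (staying inside $[t]$), after which such an arc exists and the first case goes through; in the example above, the twist carries the three arcs to $\alpha_2^1$, $\beta_1^1$, $\alpha_1^1$, landing in $S_{(a_1,b_2,b_1)}$. Your proof omits this case and the extra twist entirely, and without them the statement you actually prove --- that the small representative $t'$ itself lies in some $S_{(-,-,-)}$ --- is false.
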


\begin{proof}
    Let $t$ be an arbitrary triangulation and consider its family $[t]$. We know by Lemma \ref{smalltriangulation} that there exists a triangulation in $t'\in[t]$ where all bridging arcs have winding number 0. We will prove that $t'$ contains three arcs $\alpha,\beta,\gamma$ which are the defining arcs of one of our sets, or that a $2\pi$ Dehn twist of $t'$, also a member of $[t]$, contains such three arcs. We begin by proving the existence of an arc $\gamma$. Note that $a(b_m,b_1)$ is always a boundary arc crossing fdl. Thus, since we have a finite number of arcs, we can consider the largest interior arc crossing the fdl, namely an arc connecting vertices $b_{m-k},b_j$ that crosses the fdl and minimizes $m-k-j$. In the case that $\gamma=a(b_m,b_1)$, then we consider $\gamma$ to be the part of the boundary that $a(b_m,b_1)$ is homotopic to. Thus we have found $\gamma_{m-k,j}$. Notice that $\gamma_{m-k,j}$ is invariant by a $2\pi$ Dehn twist. Similarly, we can construct the largest boundary arc crossing the fdl with both endpoints being on the outside boundary component. Suppose this arc is $a(a_l,a_k)$. Next, we prove that there is a bridging arc crossing the fdl. By contradiction, suppose there was no such arc. Consider the arcs $a(b_{m-k},a_l), a(a_k,b_j)$. By the maximality of a triangulation, these two arcs are only not allowed if there is already an arc crossing at least one of them, and such an arc would cross the fdl by construction of $a(b_{m-k},b_j)$ and $a(a_l,a_k)$. Hence, if these are not part of our triangulation, there exists a bridging arc crossing the fdl. If these arcs do exist, we still have the quadrilateral composed of vertices $b_{m-k},a_l,a_k,b_j$, contradicting the fact that we have a triangulation. Thus we must either have $a(b_{m-k},a_k)$ or $a(a_l,b_j)$, both of which are bridging arcs crossing the fdl.
    
    Now, we prove the existence of defining arcs $\alpha,\beta$. First notice that there is a bridging arc with endpoint $b_j$. If not, we could draw an interior arc crossing the fdl given by $a(b_{m-k},b_{j+1})$ contradicting the maximality of $\gamma$. Similarly, we obtain a bridging arc connecting to the vertex $b_{m-k}$. There are now two cases, either there is a bridging arc with an endpoint $b_{m-k}$ crossing the fdl, or there is not. Suppose first that there is. We can pick the one furthest to the right (clockwise direction), that is the arc $a(b_{m-k},a_i)$ maximizing $i$. Denote this arc by $\alpha=\alpha_{m-k}^i$. It is easy to see that we immediately now have $\beta=\beta_{i,j}$ be the arc $a(a_i,b_j)$, as the only way such an arc could not exist would bring forth a contradiction to the maximality of either $\gamma$ or $\alpha$. Notice also that, by the construction of $\gamma$, we have $m-k-j\geq 0$ and $j\geq 0$, so that $m-k\geq j\geq 0$, whence our three arcs are indeed defining arcs. Thus our triangulation contains three defining arcs $\alpha, \beta, \gamma$ and is therefore contained in the set $S_{(a_i,b_{m-k},b_j)}$. Now suppose that there is no bridging arc crossing the fdl with endpoint $b_{m-k}$. Note that we do have a bridging arc with endpoint $b_{m-k}$, so after applying a $2\pi$ Dehn twist this arc becomes a bridging arc with endpoint $b_{m-k}$ that crosses the fdl in another triangulation contained in $[t]$. Thus by our above work we can again find arcs $\alpha,\beta$, such that this triangulation obtain by a Dehn twist is contained in one of our sets.
\end{proof}

Next, in the following lemma, we show that the sets $S_{(-,-,-)}$ are disjoint.

\begin{lem} \label{disjoint}
$S_{(a_k,b_i,b_j)} = S_{(a_{k'},b_{i'},b_{j'})}$ if and only if $k=k', i= i', j= j'$. Moreover, \\$S_{(a_k,b_i,b_j)} \cap S_{a_{k'},b_{i'},b_{j'}} \neq \emptyset \Rightarrow S_{(a_k,b_i,b_j)}=S_{(a_{k'},b_{i'},b_{j'})}$.
\end{lem}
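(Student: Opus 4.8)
The plan is to reduce everything to a single claim, and then prove that claim by showing that the defining arcs of a set are \emph{canonically recoverable} from any triangulation lying in it. The "if" half of the biconditional is immediate: if $k=k'$, $i=i'$, $j=j'$ then the two index triples determine literally the same three defining arcs $\alpha,\beta,\gamma$, so $S_{(a_k,b_i,b_j)}$ and $S_{(a_{k'},b_{i'},b_{j'})}$ are defined to be the same set. For the remaining content it suffices to establish the claim
\[
(\star)\qquad S_{(a_k,b_i,b_j)}\cap S_{(a_{k'},b_{i'},b_{j'})}\neq\emptyset \ \Longrightarrow\ (k,i,j)=(k',i',j').
\]
Indeed, $(\star)$ gives the "moreover" statement at once (nonempty intersection forces equal indices, hence equal sets), and it also gives the "only if" direction: each $S_{(a_k,b_i,b_j)}$ with admissible indices is nonempty, since by the counting argument preceding this lemma its cardinality is the positive number $C_{i+j}C_{n+m-(i+j)-1}$ (equivalently, the regions $R_1,R_2$ always admit at least one triangulation), so from $S_{(a_k,b_i,b_j)}=S_{(a_{k'},b_{i'},b_{j'})}$ we may pick a common $t$ and invoke $(\star)$.

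To prove $(\star)$, fix $t$ in both sets, so $t$ contains the defining arcs $\alpha=a(b_i,a_k)$ (crossing the fdl), $\beta=a(a_k,b_j)$ (not crossing the fdl), and $\gamma=a(b_i,b_j)$ (an inner exterior arc crossing the fdl), and likewise the primed triple. The heart of the argument, which I expect to be the main obstacle, is to show that $\gamma$ is intrinsic to $t$: it is the unique \emph{outermost} inner exterior arc of $t$ crossing the fdl (the one with the largest region $R_1$, i.e. minimal $m-k-j$ in the notation of Lemma \ref{lem:surjectivity}). All inner exterior arcs of $t$ crossing the fdl enclose the inner endpoint of the fdl and are pairwise noncrossing, hence are linearly nested, so a unique outermost one exists; I must show $\gamma$ is it. Suppose some inner exterior arc $\gamma''=a(b_{i''},b_{j''})$ of $t$ crosses the fdl and is nested strictly outside $\gamma$, with region $R_1''\supsetneq R_1$. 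Then at least one of $b_i,b_j$ lies strictly interior to the inner-boundary arc spanned by $\gamma''$; say it is $b_i$. Since $R_1''$ is a disk meeting only the inner boundary, $a_k\notin\overline{R_1''}$, while $b_i$ lies on $\partial R_1''$ with $R_1''$ occupying the entire interior-side neighborhood of $b_i$. Thus $\alpha=a(b_i,a_k)$ leaves $b_i$ into $R_1''$ and must exit through $\gamma''$, so $\alpha$ and $\gamma''$ cross — contradicting that $t$ is a triangulation. (If instead $b_j$ is the strictly enclosed foot, run the same argument with $\beta$ in place of $\alpha$.) Hence nothing is nested strictly outside $\gamma$, so $\gamma$ is the outermost such arc, which is intrinsic to $t$; the degenerate case in which $t$ has no proper inner arc crossing the fdl is handled identically, with $\gamma$ taken to be the relevant inner-boundary segment as in Lemma \ref{lem:surjectivity}. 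Consequently $\gamma=\gamma'$.

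It remains to recover $a_k$, $b_i$, $b_j$. The arc $\gamma$ separates the annulus into $R_1$ (touching only the inner boundary) and the complementary region (touching the outer boundary), so the two sides of $\gamma$ are distinguishable and the face of $t$ adjacent to $\gamma$ on its \emph{outer} side is well defined and intrinsic to $t$. Because $\alpha,\beta\in t$ emanate from the feet $b_i,b_j$ of $\gamma$ toward $a_k$ and are noncrossing, and because $t$ is maximal, the triangle they bound with $\gamma$ contains no further vertex or arc and is precisely this outer-side face; that the inner vertices strictly between $b_j$ and $b_i$ on the non-fdl side fall into the freely triangulated region $R_2$ rather than inside this triangle is exactly what was observed in Figure \ref{typingissue}. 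Thus the outer-side face is the triangle with vertices $a_k,b_i,b_j$, and from it we read off intrinsically: its unique outer vertex is $a_k$; its edge to $a_k$ that crosses the fdl is $\alpha$, identifying $b_i$; and its other edge $\beta$ identifies $b_j$. Applying the same recovery to the primed data, and using $\gamma=\gamma'$, both triples are read off from the single outer-side face of $\gamma$, forcing $a_k=a_{k'}$, $b_i=b_{i'}$, and $b_j=b_{j'}$. This proves $(\star)$ and with it the lemma.
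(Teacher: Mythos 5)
Your proof is correct, and its engine coincides with the paper's: both arguments rest on the observation that two fdl-crossing inner exterior arcs lying in a common triangulation must be nested, and that strict nesting is impossible because a bridging defining arc of the inner pair would then be forced to cross the outer $\gamma$. Your characterization of $\gamma$ as the unique outermost fdl-crossing inner arc of $t$ is exactly the paper's step ``without loss of generality $i\le i'$ and $j'\le j$; if $i<i'$ then $\alpha_{i'}^{k'}$ crosses $\gamma_{i,j}$,'' repackaged as an intrinsic-recovery statement. Where you genuinely diverge is the identification of $a_k$: the paper runs one more crossing argument (if $k\ne k'$ then one set's $\alpha$ crosses the other's $\beta$), whereas you show that the triangle bounded by $\alpha,\beta,\gamma$ is a face of $t$ --- the unique face adjacent to $\gamma$ on its outer side --- and read off $a_k$, $b_i$, $b_j$ from that face at once. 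Your route buys uniformity (a single recovery procedure pins down all three indices, with no cyclic case analysis) at the cost of having to justify that the triangle is a face, which you do correctly via the observation from Figure \ref{typingissue} that it contains no marked points and any arc inside it would be isotopic to a side; the paper's route is more elementary but needs the extra crossing case. You also tighten two points the paper leaves implicit: the ``only if'' direction requires the sets to be nonempty, which you correctly extract from the count $C_{i+j}C_{n+m-(i+j)-1}\ge 1$ (a fact independent of this lemma, since disjointness is only needed to sum the cardinalities, so there is no circularity), and the degenerate case where $\gamma$ is homotopic to a boundary segment is folded consistently into the recovery procedure, matching the convention of Lemma \ref{lem:surjectivity}.
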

 \begin{proof}
     The if direction is clear. The only if direction will be shown in the proof of the moreover statement, whose proof we now give. Suppose that $S_{(a_k,b_i,b_j)} \cap S_{a_{k'},b_{i'},b_{j'}} \neq \emptyset$. By construction, we have the arcs $\gamma_{i,j},\gamma_{i',j'}$. These do not intersect if and only if either $i\leq i' \bmod m, j'\leq j\bmod m$ or $i'\leq i \bmod m , j\leq j' \bmod m$. Without loss of generality suppose $i\leq i' \bmod m, j'\leq j \bmod m$, and denote by $\gamma$ the arc $\gamma_{i,j}$. If $i< i'$ then $\alpha_{i'}^k$ would intersect $\gamma$, a contradiction. Thus $i=i'$. Similarly, we obtain $j=j'$ by considering the arc $\beta_{j'}^k$. Thus $\gamma_{i,j}=\gamma_{i',j'}$. Consider now the arcs $\alpha_{i}^k$, $\alpha_{i'}^{k'}=\alpha_{i}^{k'}$, as well as $\beta_{j}^k$, and $\beta_{j'}^{k
     }=\beta_{j}^{k'}$. If $k'<k$, then the arcs $\alpha_i^k$ and $\beta_j^{k'}$ would cross. If $k<k'$ the arcs $\alpha_i^{k'}$ and $\beta_j^k$ cross. Thus $k=k'$ and we can conclude that $S_{(a_k,b_i,b_j)} = S_{(a_{k'},b_{i'},b_{j'})}$.
 \end{proof}

The direct application of the three lemmas above gives Theorem \ref{finalresult}. We can think of it as a map from the set of all possible triangulations, into our constructed set. In effect, we have shown that this map is injective and surjective, and calculated the cardinality of our sets. Thus the statement of the theorem is clear. We now give proofs of the technical lemmas used in our proofs. We begin with reducing the double summation to the single summation. 

\begin{lem}\label{singlesum}
    $\sum_{j=0}^{m-1}\sum_{i=j}^{m-1}C_iC_{n+m-i-1}=\sum_{k=0}^{m-1}(k+1)C_kC_{n+m-k-1}$
\end{lem}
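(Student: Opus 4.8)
The plan is to recognize both sides as sums of the same quantity over the same triangular index set, so that the identity reduces to a single interchange of the order of summation. First I would abbreviate $a_i := C_i C_{n+m-i-1}$, so that the claimed equality reads
$$\sum_{j=0}^{m-1}\sum_{i=j}^{m-1} a_i = \sum_{k=0}^{m-1}(k+1)\,a_k.$$
The left-hand side is precisely the sum of $a_i$ over all integer pairs $(i,j)$ lying in the region $\{(i,j) : 0 \le j \le m-1,\ j \le i \le m-1\}$, which is the same as the triangle $\{(i,j) : 0 \le j \le i \le m-1\}$.

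Next I would interchange the order of summation by describing this triangle instead as $\{(i,j) : 0 \le i \le m-1,\ 0 \le j \le i\}$. For each fixed $i$, the summand $a_i$ does not depend on $j$, and the index $j$ ranges over exactly the $i+1$ values $0,1,\dots,i$. Hence summing over $j$ first contributes a factor of $(i+1)$, giving $\sum_{i=0}^{m-1}(i+1)\,a_i$. Renaming the index $i$ to $k$ then yields the right-hand side, completing the argument.

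There is essentially no obstacle here, and no genuinely hard step; the identity is purely a matter of counting multiplicities. The only two points warranting a moment of care are that the summand $C_i C_{n+m-i-1}$ depends solely on $i$ (with $n$ and $m$ fixed), so it may legitimately be pulled out of the inner sum over $j$, and that the index set is the full triangle $0 \le j \le i \le m-1$ with no boundary terms missing — both of which follow immediately from the stated summation bounds. Thus for each $i$ the number of $j \in \{0,\dots,m-1\}$ with $j \le i$ is exactly $i+1$, and the identity is established.
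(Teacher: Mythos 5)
Your proof is correct and follows essentially the same approach as the paper's: both arguments count that each term $C_iC_{n+m-i-1}$ appears exactly $i+1$ times in the double sum (once for each $j$ with $0\leq j\leq i$), your interchange of summation being just a more formal phrasing of this multiplicity count.
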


\begin{proof}
    This identity is evident by considering how many times the term $C_xC_{n+m-x-1}$ for $0\leq x \leq m-1$ appears in the double summation. Namely, for each $0\leq j \leq x$ it appears precisely once in the inside summation. For $j>x$ the term does not appear in the inside summation. Hence, in total it shows up $x+1$ many times. 
\end{proof}  

\begin{rem}
    It is important to note that in the above proof, by the term $C_xC_{n+m-x-1}$ we mean mean the symbolic term, not other terms that equate to the same value. For example, in the case $n=2,m=4,x=2$, when considering the term $C_2C_{2+4-2-1}$ we do not consider the term $C_3C_{2+4-3-1}$.
\end{rem}

\begin{lem}\label{Shifting Lemma}
    Let $a_i\in \mathcal{A},b_j\in \mathcal{B}$. Then up to homeomorphism,   
    $(p^{-1}(a_i)_x)\cap (p^{-1}(b_j)_x) = \emptyset$. Hence the pre-image of any bridging arc on the annulus can be viewed as a line segment with a finite positive or negative slope on the universal cover.
\end{lem}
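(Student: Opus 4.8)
The plan is to separate the two families of lifted $x$-coordinates by a fractional Dehn twist and then to read off finiteness of the slope from that separation. First I would record the coordinates explicitly. With $p$ as in the proof of Theorem \ref{map} and the normalization fixed above, the $x$-coordinates of the preimages are $(p^{-1}(a_i))_x = \tfrac{i}{n} + \mathbb{Z}$ and $(p^{-1}(b_j))_x = \tfrac{j}{m} + \mathbb{Z}$. Viewed as cosets in $\mathbb{R}/\mathbb{Z}$, these agree exactly when $\tfrac{i}{n} \equiv \tfrac{j}{m} \pmod 1$, i.e. when $nm \mid (im - jn)$, which genuinely occurs (for instance $n = m$, $i = j$). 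Hence the symmetric equally spaced configuration need not satisfy the conclusion, and the phrase ``up to homeomorphism'' is present precisely to let us perturb it.

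Next I would introduce the homeomorphism. Writing the annulus as $(\mathbb{R}/\mathbb{Z}) \times [0,1]$, for $\epsilon \in \mathbb{R}$ set $h_\epsilon(\theta, t) = (\theta + \epsilon t,\, t)$. This is a homeomorphism fixing the boundary carrying $\mathcal{A}$ and rotating the boundary carrying $\mathcal{B}$ by $\epsilon$; it is a $2\pi\epsilon$ Dehn twist. Its lift to $\mathbb{R} \times [0,1]$ is the shear $\tilde h_\epsilon(x,t) = (x + \epsilon t,\, t)$, which fixes the $x$-coordinates of the lifts of every $a_i$ and translates those of every $b_j$ by $\epsilon$, so after applying $h_\epsilon$ we have $(p^{-1}(b_j))_x = \tfrac{j}{m} + \epsilon + \mathbb{Z}$. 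The two families of coordinates then meet for some $i,j$ if and only if $\epsilon \equiv \tfrac{i}{n} - \tfrac{j}{m} \pmod 1$; the right-hand side takes only finitely many values in $\mathbb{R}/\mathbb{Z}$, so choosing $\epsilon$ outside this finite set (any irrational $\epsilon$ will do) forces $\mathcal{A}'_x \cap \mathcal{B}'_x = \emptyset$, which is the first assertion.

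For the second assertion I would use convexity of the cover. Since $\mathbb{R} \times [0,1]$ is convex, any simple arc in it is isotopic rel endpoints to the straight segment joining those endpoints, so each chord in the preimage of a bridging arc is isotopic to the segment from a lift of its outer endpoint to a lift of its inner endpoint; these lie at heights $0$ and $1$. Such a segment is vertical, hence of infinite slope, exactly when its two endpoints share an $x$-coordinate, and the disjointness just established rules this out; therefore the slope is finite and nonzero, its sign recording whether the inner lift lies to the right or to the left of the outer lift, in agreement with the positive/negative slope convention stated earlier.

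I expect the only real subtlety to be the geometric bookkeeping that $h_\epsilon$ is a genuine homeomorphism acting on the universal cover as the shear $\tilde h_\epsilon$, so that it shifts only the inner $x$-coordinates; it is worth noting here that a rigid rotation of the whole annulus would not suffice, since it moves both boundaries equally and so cannot destroy a coincidence. The finite-set count and the convexity argument are then routine.
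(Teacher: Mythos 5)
Your proposal is correct and takes essentially the same approach as the paper: both arguments separate the lifted $x$-coordinates by applying a fractional ($2\pi\epsilon$) Dehn twist of one boundary component, the only difference being that the paper fixes $\epsilon$ as one third of the minimal nonzero gap among coordinate differences, whereas you characterize the finite set of bad twist parameters in $\mathbb{R}/\mathbb{Z}$ and choose any $\epsilon$ (e.g.\ irrational) outside it. Your explicit justification of the second assertion (finite nonzero slope via convexity and isotopy rel endpoints in the cover) fills in a step the paper leaves implicit.
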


\begin{proof}
    It is clear that the lifts of two distinct vertices in $\mathcal{A}$ ( respectively in $\mathcal{B}$) intersect trivially. Suppose we have two vertices $a_i \in \mathcal{A}$, $b_j \in \mathcal{B}$ such that there exists lifts of $a_i, b_j$ that share the same $x$-coordinate. Let $E=\big{\{} |(a'_k)_x-(b'_l)_x|\bmod 1 ; a'_k\in p^{-1}(a_k), b'_l\in p^{-1}(b_l) \forall k,l \big{\}}- \{0\}$. This set is finite due to the modular property of $p$. Intuitively, we take the pre-images of $\mathcal{A}$ and $\mathcal{B}$, put them onto a single real line, ignore the overlapping vertices, and consider the distances of the remaining points modulo 1. Put $\epsilon=\frac{1}{3}\text{min}(E)$. Then a clockwise or counter-clockwise $2\pi \epsilon$ Dehn twist of the inner boundary component gives a homeomorphic annulus such that for any $a_i\in \mathcal{A},b_j\in \mathcal{B}$,   
    $(p^{-1}(a_i)_x)\cap (p^{-1}(b_j)_x) = \emptyset$. This is because under the $2\pi \epsilon$ Dehn twist, any ``overlapped" marked points get ``un-overlapped", while not ``overlapping" any other marked points due to the choice of $\epsilon$. Formally, if $p^{-1}(a_i)_x=p^{-1}(b_j)_x$ for some $i,j$ in the annulus obtained via the $2\pi \epsilon$ Dehn twist, then on the universal cover of the original matrix, $p^{-1}(a_i)_x=p^{-1}(b_j)_x \pm \frac{\epsilon}{3}$, contradicting our choice of $\epsilon$.
\end{proof}

\section{Algebraic description of families} \label{sec: algebraic description of families}

In this section, we will provide an algebraic description of inner equivalence classes in terms of the cluster category of $Q:=Q^{\bm{\varepsilon}}$, a quiver of type $\tilde{\mathbb{A}}_{p-1}$. First, recall the definition of the cluster category $\mathcal{C}_Q=\mathscr{D}/F$ where $\mathscr{D} := \mathscr{D}^b(mod\text{-}\Bbbk Q)$ and $F:\mathscr{D} \rightarrow \mathscr{D}$ is the auto-equivalence defined by $\Sigma \tau^{-1}$. Typically, we choose the representatives of the $F$ orbits to be the unshifted objects along with the once shifted projectives in $\mathscr{D}$. In what follows, the $F$ orbit of objects and morphisms in $\mathcal{C}_Q$ will be important to provide intuition behind the algebraic consequences of performing Dehn twists of the annulus. Thus, when we draw the AR quiver of the cluster category associated to $Q^{\bm{\varepsilon}}$, we will include negatively shifted injectives and irreducible morphisms that are identified with the shifted projectives in the cluster category.

\begin{exmp}\label{exmp: cluster category}
    Let $Q$ be the quiver $$\xymatrix{1 \ar[r] \ar@/^1.5pc/[rrr] & 2 & 3 \ar[l] \ar[r] & 4}$$

    In this example, we will construct the parts of the AR quiver of $\mathcal{C}_Q$ containing only the preprojectives, preinjectives, and shifted projectives. Moreover, we will include the negatively shifted injectives and color them to indicate that they are in the same $F$ orbit as the corresponding shifted projective, which is the  typically chosen representative. The reason we are doing this will be explained later in the section. The way we have constructed this AR quiver, and the way we will always construct the AR quiver of $\mathcal{C}_Q$ is as follows. The arrows with positive slope between two string modules indicate the addition (deletion) of a hook (cohook) at the start of the string. The downward sloping arrows between two string modules indicate the addition (deletion) of a hook (cohook) at the end of the string. With this convention, the position of the shifted projectives is uniquely defined.
\begin{center}
\resizebox{15cm}{!}{
    $$\xymatrix{ 
     & & & & & & \ar[dr] & & & & \\ 
     & {\color{red}\Sigma^{-1} I(1)} \ar[dr] & & 42_3 \ar[dr] \ar[ur] & & & & 13_3 \ar[dr] & & {\color{yellow} \Sigma P(2)} \ar[dr] & \\
     {\color{yellow}\Sigma^{-1} I(2)} \ar[ur] \ar[dr] & & 22_1 \ar[ur] \ar[dr] & & & & \ar[ur] \ar[dr] & & 11_1 \ar[ur] \ar[dr] & & {\color{red}\Sigma P(1)} \\
     & {\color{blue} \Sigma^{-1} I(3)} \ar[ur] \ar[dr] & & 24_3 \ar[ur] \ar[dr] & & \cdots & & 31_3 \ar[ur] \ar[dr] & & {\color{green}\Sigma P(4)} \ar[ur] \ar[dr] & \\
     {\color{green}\Sigma^{-1} I(4)} \ar[ur] \ar[dr] & & 44_1 \ar[ur] \ar[dr] & & & & \ar[ur] \ar[dr] & & 33_1 \ar[ur] \ar[dr] & & {\color{blue}\Sigma P(3)} \\ 
     & {\color{red} \Sigma^{-1} I(1)} \ar[ur] & & 42_3 \ar[dr] \ar[ur] & & & & 13_3 \ar[ur] & & {\color{yellow} \Sigma P(2)} \ar[ur] & \\
     & & & & & & \ar[ur] & & &}$$}
     \end{center}
\end{exmp}

Let $M,N\in\text{Ob}(\mathcal{C}_Q)$. We say that $M$ \textbf{connects to} $N$ if there exist objects $M',N'\in \text{Ob}(\mathcal{C}_Q) \cup \{\Sigma^{-1} I(j) | j\in Q_0\}$ such that $M'$ and $N'$ are in the $F$ orbits of $M$ and $N$ respectively and there exists an arrow from $M'$ to $N'$ in the AR quiver of $\mathscr{D}$. By a \textbf{path} in $\mathcal{C}_Q$, we mean a sequence of objects $(\dots,M_1,M_2,\dots,M_k,\dots)$ in $\mathcal{C}_Q$ such that for each $i$, $M_i$ connects to $M_{i+1}$. We say there exists a \textbf{path} from $M$ to $N$ in $\mathcal{C}_Q$ if there exists a finite sequence of objects $(M = M_1,M_2,\dots,M_k = N)$ such that for each $i \in \{1,2,3,\dots, k-1\}$, $M_i$ connects to $M_{i+1}$. For example, there is a path from $33_1$ to $44_1$ in $\mathcal{C}_Q$ from Example \ref{exmp: cluster category} given by $(33_1,\Sigma P(4), \Sigma P(1), 44_1)$ where $\Sigma P(1)$ connects to $44_1$ since there is an arrow in the AR quiver of $\mathscr{D}$ from $\Sigma^{-1} I(1)$, which is in the $F$ orbit of $\Sigma P(1)$, to $44_1$. On the other hand, there is no path from $44_1$ to $33_1$. 

\begin{defn} \textbf{}
\begin{itemize}
    \item By a \textbf{ray} in $\mathcal{C}_Q$, we mean an infinite path $(\dots,M_1,M_2,\dots,M_k,\dots)$ such that for consecutive objects $M_i$ and $M_{i+1}$ in the path, the following hold:
    \begin{itemize}
        \item If $M_i,M_{i+1} \in \mathcal{R}$ are consecutive objects in a path, then the arrow between them has positive slope.
        \item If $M_i,M_{i+1} \in \mathcal{I} \cup \{\Sigma P(j) | j\in Q_0\}$ are consecutive objects in a path, the arrow between them also has positive slope.
        \item Otherwise, the arrow from $M_i$ to $M_{i+1}$ has negative slope.
    \end{itemize}  
    \item Dually, a \textbf{coray} in $\mathcal{C}_Q$, is an infinite path $(\dots,M_1,M_2,\dots,M_k,\dots)$ such that for consecutive objects $M_i$ and $M_{i+1}$ in the path, the following hold:
    \begin{itemize}
        \item If $M_i,M_{i+1} \in \mathcal{R}$ are consecutive objects in a path, then the arrow between them has negative slope.
        \item If $M_i,M_{i+1} \in \mathcal{I} \cup \{\Sigma P(j) | j\in Q_0\}$ are consecutive objects in a path, the arrow between them also has negative slope.
        \item Otherwise, the arrow from $M_i$ to $M_{i+1}$ has positive slope.
    \end{itemize}  
\end{itemize}
\end{defn}

\begin{rem}
    The reason the positive/negative convention switches between the preprojective and preinjective components is that when we identify the shifted projectives with the negatively shifted injectives, we must flip the shifted projectives. In a certain sense, we can intuitively think of the preprojective and preinjective components of the AR quiver of $\mathcal{C}_Q$ as being glued together on a M\"obius strip.
\end{rem}

\begin{exmp}
In $\mathcal{C}_Q$ from Example \ref{exmp: cluster category}, we have a coray given by $$(\dots, 11_1, \Sigma P(4), \Sigma P(3), 22_1, 42_3, \dots).$$ This can be seen by identifying $\Sigma P(3)$ with $\Sigma^{-1} I(3)$. We also have a ray given by $$(\dots, 33_1, \Sigma P(4), \Sigma P(1), 22_1, 24_3, \dots)$$ which can be seen by identifying $\Sigma P(1)$ with $\Sigma^{-1} I(1)$.
\end{exmp}

We wish to show that elementary twists of the annulus correspond to moving objects up and down rays/corays in $\mathcal{C}_Q$. Recall that $Q$ is a quiver of type $\tilde{\mathbb{A}}_{p-1}$ and let $A_{n,m}$ be the associated annulus with $n$ marked points on the outer boundary and $m$ marked points on the inner boundary, where $n+m = p$. In order to provide an algebraic explanation of the families of clusters, we first need the notion of an \textbf{elementary Dehn twist} of the inner boundary component, by which we mean a $2\pi/m$ Dehn twist of the inner boundary component. Elementary Dehn twists of the outer boundary component are defined analogously. We will call these \textbf{elementary twists}. We have the following result from the proof of Theorem 4.18 in \cite{Master'sThesisClustersandTriangulations}.

\begin{lem}\textbf{}[\cite{Master'sThesisClustersandTriangulations} Theorem 4.18] \label{lem: irreducible maps strings}
\begin{itemize}
    \item Let $M$ be preprojective and not projective. Then a clockwise (counter-clockwise) elementary Dehn twist of the inner boundary component corresponds to adding (deleting) a hook at the end of $M$.
    \item Let $M$ be preprojective and not projective. Then a clockwise (counter-clockwise) elementary Dehn twist of the outer boundary component corresponds to deleting (adding) a hook at the start of $M$.
    \item Let $M$ be preinjective and not injective. Then a clockwise (counter-clockwise) elementary Dehn twist of the inner boundary component corresponds to deleting (adding) a cohook at the start of $M$. 
    \item Let $M$ be preinjective and not injective. Then a clockwise (counter-clockwise) elementary Dehn twist of the outer boundary component corresponds to adding (deleting) a cohook at the end of $M$. 
\end{itemize} 
\end{lem}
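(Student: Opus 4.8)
The plan is to transport the whole statement to the universal cover $\mathbb{R}\times[0,1]$ of Theorem \ref{map} and read everything off through the bijection $\varphi$. The first thing to establish is the geometric meaning of an elementary twist: a $2\pi/m$ Dehn twist of the inner boundary is realized on the cover by a shear that fixes the outer boundary line pointwise and translates the inner boundary line by $1/m$, interpolating linearly in between. Since the inner marked points sit at the cosets $\{\tfrac{j}{m}+\mathbb{Z}\}$, this shear carries each inner endpoint of an arc to the adjacent inner marked point (clockwise or counterclockwise according to the direction of the twist) while keeping its outer endpoint fixed; the outer elementary twist, a $2\pi/n$ twist, does the mirror-image thing. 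By Lemma \ref{Shifting Lemma} we may take every bridging chord to be an honest line segment, so the only thing the shear changes is which steep chords the segment meets. Thus the lemma reduces to a bookkeeping problem: compare the left-to-right list $\{X_i,\dots,X_j\}$ of steep chords crossed by an arc with the list crossed by its sheared image, and match the difference against the hook/cohook recipes of Theorem \ref{thm: tau of string algebra}.

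I would carry out this comparison in full only for the first bullet, $M$ preprojective and non-projective under the inner twist, and obtain the remaining three by symmetry. A preprojective module corresponds to a bridging arc --- this is read off from the string shape through $\varphi$, and is exactly what makes ``the inner endpoint'' well defined --- so $M$ has one inner and one outer endpoint, and the preprojective characterization ($t(\alpha)=s(w_0)$ and $t(\beta)=t(w_r)$) pins down the local shape of the string at both ends. Tracking the inner endpoint as it slides clockwise by one marked point, the sheared segment first sweeps across one new steep chord of the orientation that permits an extension at $t(w_r)$, and is then forced, by the slope and the position of the neighbouring steep chords, to cross a maximal run of steep chords of the opposite orientation before terminating. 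This is precisely ``add an inverse arrow at the end, then as many direct arrows as possible,'' i.e. adding a hook at the end of $M$; reversing the twist strips off exactly this segment, which is deleting a hook at the end. The outer elementary twist moves the other endpoint and, by the identical argument applied at $s(w_0)$, adds or deletes a hook at the start; the sign flip between the two cases (clockwise outer $=$ delete at the start, whereas clockwise inner $=$ add at the end) is forced because the shear of the outer line runs opposite to that of the inner line relative to the fixed steep chords.

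For the two preinjective bullets I would invoke the standard duality $D=\operatorname{Hom}_\Bbbk(-,\Bbbk)$, which sends $\operatorname{mod}\Bbbk Q$ to $\operatorname{mod}\Bbbk Q^{\mathrm{op}}$, interchanges preprojectives with preinjectives, intertwines $\tau$ with $\tau^{-1}$, and hence (via Theorem \ref{thm: tau of string algebra}) interchanges hooks with cohooks and the start of a string with its end. Geometrically $Q^{\mathrm{op}}$ is the annulus $A_{n,m}$ with its orientation reversed, so $D$ also interchanges clockwise and counterclockwise twists. Applying $D$ to ``$M$ preprojective, clockwise inner twist $=$ add a hook at the end'' then yields ``$M$ preinjective, counterclockwise inner twist $=$ add a cohook at the start,'' which is the third bullet; the fourth follows the same way from the outer statement. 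As a sanity check I would verify each of the four translations directly on the $\tilde{\mathbb{A}}_3$ AR quiver of Example \ref{exmp: cluster category}, where the two irreducible-map directions in the preprojective component (hook at start, hook at end) are already visible.

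The hard part, and where I expect essentially all of the risk to live, is the convention matching in the second paragraph. One must fix consistent choices for the slopes of outer-starting versus inner-starting bridging chords (Lemma \ref{Shifting Lemma}), for the counterclockwise reading of strings, and for the placement of arrowheads, and then check that (i) the inner endpoint really corresponds to the end $t(w_r)$ rather than the start, and (ii) a clockwise twist produces an \emph{addition} rather than a deletion. A single wrong convention turns a hook into a cohook or swaps clockwise with counterclockwise, so the genuine content is confirming that the maximal run of opposite-orientation chords swept out by the shear is exactly the maximal run appearing in the hook/cohook definition. I would settle this on the running example before asserting it in general.
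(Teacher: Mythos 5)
The paper does not actually prove this lemma: it is imported, with citation, from Theorem 4.18 of \cite{Master'sThesisClustersandTriangulations}, and the surrounding text only remarks that the statement ``follows from the proof of Theorem 4.18'' there. So there is no proof in the paper to compare yours against; I can only assess your proposal on its own terms.

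Your plan is sound, and the two items you flag as the locus of risk do check out, but only under the conventions the paper actually uses in Section 3 and in Lemmas \ref{lem: irreducible maps to shifts} and \ref{lem: twists of vertical strands} (the Section 2 text and the Section 3 figures contradict each other about which boundary line of the cover carries the outer marked points, so fixing a coherent convention is a genuine prerequisite, not a formality). With the figures' conventions one can verify: bridging arcs correspond exactly to preprojective and preinjective modules; a preprojective arc runs from the outer to the inner boundary in the direction in which its string is read, so the inner endpoint is indeed $t(w_r)$; and sliding that endpoint one marked point clockwise makes the chord cross first one new steep chord sharing an outer point with the last old one (an inverse letter) and then a maximal fan of steep chords through the new inner endpoint, consecutive ones sharing inner points (direct letters) --- precisely adding a hook at the end, with the reverse twist deleting exactly this segment. (A worthwhile sanity check, in the notation of Example \ref{exmp: cluster category}: the clockwise inner twist sends $\tau^{-1}P(2)=44_5$ to $42_7$, and the clockwise outer twist sends $44_5$ to $24_3$, matching bullets one and two.) Your duality step is also correct, but for a reason you assert rather than establish: realizing $Q^{\mathrm{op}}$ by reversing the orientation of the \emph{same} annulus (left--right reflection of the cover) preserves the two boundaries, swaps clockwise with counterclockwise, and sends the arc of $M$ to the arc of $DM$ only because string modules are independent of the orientation of the underlying walk; under this reflection walks reverse, so start and end swap and hooks become cohooks, pairing bullet one with bullet three and bullet two with bullet four as you claim. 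Had you instead realized $Q^{\mathrm{op}}$ by flipping the cover upside down --- which swaps the boundaries and preserves clockwise --- you would have paired bullet one with bullet four; both derivations are valid, but one must commit to a single realization and verify it intertwines $D$ with $\varphi$, which is exactly the sort of convention check your last paragraph correctly identifies as the real content. In short: no step of your outline fails, but as written it is a plan whose decisive verifications are deferred; executing them as above completes the proof.
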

\begin{rem}\label{rem: twists non proj non inj}
   It follows from Lemma \ref{lem: irreducible maps strings} that elementary twists correspond to irreducible morphisms between the corresponding modules. Following our conventions for constructing the AR quiver of $\mathcal{C}_Q$, we conclude in this case that elementary clockwise (counter-clockwise) twists of the inner boundary component correspond to moving one place forward (backward) in a ray in the cluster category. Analogously, elementary clockwise (counter-clockwise) twists of the outer boundary component correspond to moving one place backward (forward) in a coray in the cluster category.   
\end{rem}

\begin{rem}
    For completeness, we mention that performing elementary clockwise (counter-clockwise) twists of the inner boundary component correspond to $\tau^{-1}$ ($\tau)$ of the left regular modules. Performing elementary clockwise (counter-clockwise) twists of the outer boundary component correspond to $\tau$ ($\tau^{-1})$ of the right regular modules.
\end{rem}

We now need to understand the action of these twists on projective/injective modules and the shifted projectives. Actually, it follows from the proof of Theorem 4.18 in \cite{Master'sThesisClustersandTriangulations} that in the case we have an injective module such that a cohook can be deleted at the start of the string, an elementary clockwise twist of the inner boundary component corresponds to deleting a cohook at the start of the string, hence moving one position forward in a ray. Analogous statements can be made for the other twists and for projectives. Thus it suffices to study the case in which hooks and cohooks can't be added or deleted from the string.

\begin{lem} \textbf{} \label{lem: irreducible maps to shifts}
    \begin{enumerate}
        \item Let $I(j)$ be the injective module at vertex $j$ such that a cohook can't be deleted at the start (end). Then an elementary clockwise (counter-clockwise) twist of the inner (outer) boundary component sends $I(j)$ to $\Sigma P(j+1 \bmod \, |Q_0|)$ ($\Sigma P(j-1 \bmod \, |Q_0|)$).
        
        \item Let $P(j)$ be the projective module at vertex $j$ such that a hook can't be deleted at the the end (start). Then an elementary counter-clockwise (clockwise) twist of the inner (outer) boundary component sends $P(j)$ to $\Sigma P(j-1 \text{mod} \, |Q_0|)$ ($\Sigma P(j+1 \bmod \, |Q_0|)$).
    \end{enumerate}
\end{lem}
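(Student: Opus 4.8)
The plan is to combine the dictionary already set up between elementary twists and irreducible morphisms (Lemma \ref{lem: irreducible maps strings}, Remark \ref{rem: twists non proj non inj}, and the paragraph reducing everything to the non-deletable case) with an Auslander--Reiten computation in $\mathscr{D}$ that pins down which shifted projective is hit. First I would translate the hypothesis combinatorially: by the definition of deleting a cohook at the start, a cohook \emph{cannot} be deleted at the start of a string exactly when the string has no inverse arrows, i.e.\ $I(j)$ is an all-direct (uniserial) string. Since $I(j)$ is the injective envelope of $S(j)$, an all-direct string with socle $S(j)$ forces $j$ to have a single incoming arrow (otherwise $I(j)$ would branch and fail to be a single string), and that arrow comes from the $j-1$ side; equivalently $j$ is a source. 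In either case the edge of $Q$ between $j$ and $j+1$ points outward, so $\varepsilon_j = +$ and $Q$ contains the arrow $j \to j+1$. The dual reading for Part~2 gives: $P(j)$ has no deletable hook at the end iff $P(j)$ is all-direct, which forces $\varepsilon_{j-1} = +$, i.e.\ the arrow $j-1 \to j$.

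Next I would identify the shifted-projective neighbours of $I(j)$ algebraically. From the $F$-orbit identification displayed in Example \ref{exmp: cluster category} (the negatively shifted injective $\Sigma^{-1}I(j)$ lies in the orbit of $\Sigma P(j)$), one gets $\tau^{-1} I(j) = \Sigma P(j)$ in $\mathscr{D}$, equivalently $\Sigma^{-1}I(j) = \tau P(j)$. Applying $\Sigma^{-1}$ and Auslander--Reiten duality $\mathrm{Irr}(\tau X,Y)\cong D\,\mathrm{Irr}(Y,X)$ together with $\operatorname{rad}P(j)=\bigoplus_{j\to k}P(k)$, the irreducible maps $I(j)\to \Sigma P(k)$ occur precisely for the arrows $j\to k$ of $Q$. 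By the previous paragraph this set contains $\Sigma P(j+1)$; moreover in the generic (non-simple) directed case the arrow $j\to j-1$ is absent, so $\Sigma P(j+1)$ is the \emph{only} shifted-projective neighbour. Since the clockwise inner twist is the forward-ray step (Remark \ref{rem: twists non proj non inj}) and the all-direct hypothesis forbids the competing ``delete a cohook at the start'' move that would keep us among modules, this step must be realized by the connecting irreducible map, which is $I(j)\to\Sigma P(j+1)$.

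The remaining point, which I expect to be the main obstacle, is the genuinely degenerate case where $j$ is a source, so $I(j)=S(j)$ has the two shifted-projective neighbours $\Sigma P(j+1)$ and $\Sigma P(j-1)$: one is the forward-ray (positive-slope, clockwise inner) neighbour and the other is the forward-coray (negative-slope, counter-clockwise outer) neighbour, and the algebra alone does not say which is which. I would settle this geometrically: the arc of $S(j)=jj_1$ crosses only the steep chord $X_j$, and I would check that a single clockwise elementary inner twist carries this arc onto the steep arc $X_{j+1}$, hence onto $\Sigma P(j+1)$, consistently with the left-to-right labelling of the steep chords and with the flip of the shifted projectives coming from the M\"obius identification noted after the definition of rays and corays. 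The delicate part is tracking the index correctly through that flip and through the $\bmod\,p$ wraparound, uniformly over all admissible orientations $\varepsilon$; this is where a careful picture on the universal cover (using $\varphi$ and Lemma \ref{Shifting Lemma}) is needed rather than formal AR bookkeeping.

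Finally I would obtain the counter-clockwise/``(end)'' variants and Part~2 by the dual argument. For projectives, $\tau P(j)=\Sigma^{-1}I(j)$ and the same duality yield irreducible maps $\Sigma P(k)\to P(j)$ exactly for arrows $k\to j$; the hook-non-deletable-at-end hypothesis forces $\varepsilon_{j-1}=+$, giving $\Sigma P(j-1)$ as the (generically unique) shifted-projective predecessor, and the counter-clockwise inner twist is the backward-ray step landing on it. Symmetrically, the outer-twist cases at the ``(end)'' follow either by repeating the computation at the other end of the string or, more cleanly, by invoking the standard duality $D=\mathrm{Hom}_{\Bbbk}(-,\Bbbk)$, which reverses the arrows of $Q$, interchanges projectives with injectives, and swaps ``start'' with ``end'' and ``inner'' with ``outer,'' thereby turning each statement into an already-proved one with the sign of the index correspondingly reversed.
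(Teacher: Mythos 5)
Your proposal has a genuine gap at its pivotal step. You assert that the clockwise inner elementary twist must carry $I(j)$ one step forward along a ray, i.e.\ to an AR-quiver neighbour, citing Remark \ref{rem: twists non proj non inj}; but that remark (together with the paragraph preceding Lemma \ref{lem: irreducible maps to shifts}) is established only for objects on which the relevant hook/cohook operation \emph{is} defined --- exactly the complement of the hypothesis of this lemma. For an $I(j)$ whose cohook cannot be deleted at the start, nothing proved up to this point constrains the twisted arc to correspond to an object adjacent to $I(j)$ in the AR quiver, nor even to a shifted projective at all; establishing that is precisely the content of the lemma, so invoking the ``forward-ray step'' principle here is circular. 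Your AR-duality computation is correct and genuinely nice --- from $\tau P(j)=\Sigma^{-1}I(j)$ and $\mathrm{Irr}(\tau P(j),P(k))\cong \mathrm{Irr}(P(k),P(j))$, irreducible maps $I(j)\to\Sigma P(k)$ exist exactly for arrows $j\to k$ in $Q$ --- but it only tells you \emph{which} neighbour the image must be \emph{after} you independently know the image is a neighbour. It cannot rule out, say, the twist landing on a module far from $I(j)$, because the ``competing cohook move'' you exclude is only one of infinitely many a priori possibilities for the image.

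The paper closes this gap by never appealing to AR theory: it lifts the arc $a(x,y)$ of $I(j)$ to the universal cover, uses the all-direct condition to show that the steep chords crossed by $a(x,y)$ form a fan with a common endpoint and that $y$ is the successor of the endpoint $j''$ of the last steep chord, and then checks directly that the elementary $2\pi/m$ twist carries $a(x,y)$ onto the steep arc labelled $j+1\bmod|Q_0|$, hence onto $\Sigma P(j+1\bmod|Q_0|)$ --- uniformly, including the case $I(j)=S(j)$. Note that your own plan already falls back on exactly this universal-cover computation in the source case (where both $\Sigma P(j\pm1\bmod|Q_0|)$ are neighbours and the algebra cannot distinguish them); once the unjustified adjacency assumption is removed, the same computation is needed in the general case as well, so the algebraic detour does not actually save the geometric work. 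A small secondary point: your sentence ``that arrow comes from the $j-1$ side; equivalently $j$ is a source'' is garbled --- having an incoming arrow from $j-1$ and being a source are mutually exclusive, and you presumably mean ``or''; the correct dichotomy (incoming arrows only from the $j-1$ side, or none) is what the all-direct condition gives, and it does yield the arrow $j\to j+1$ as you claim.
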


\begin{proof}
We will show the first part of 1. The proof of the second part of 1 is analogous and 2 is dual to 1. Let $I(j)$ be the injective module at vertex $j$ where $j$ is such that it is not possible to delete a cohook from the start of $I(j)$ and let $a(x,y)$ be the arc corresponding to $I(j)$. Let $l = a(l',l'')$ be the first (left most in the cover) steep arc that $a(x,y)$ crosses and since $I(j)$ is the injective module at $j$, $j = a(j',j'')$ is the last steep arc (right most in the cover) that $a(x,y)$ crosses. Since it is not possible to delete a cohook from the start of $I(j)$, we know that all arrows in the walk corresponding to $I(j)$ must be direct, hence in $Q_1$. Therefore, we conclude that all of the steep arcs that $a(x,y)$ crosses must be clockwise of $l$. Since the steep arcs are bridging, we conclude that they all have an endpoint in common. Suppose this end point lies on the inner boundary component. The proof when this endpoint lies on the outer boundary component is analogous. Moreover, since the steep arcs form a triangulation, we conclude $y$ must be the successor of $j''$ on the outer boundary component. Then a clockwise $2\pi/m$ Dehn twist sends $a(x,y)$ to the arc $a(x',y)$ beginning at the successor of $x$ and ending at the successor of $j''$. Again, since the steep arcs form a triangulation and it is impossible to delete a cohook from the start of $I(j)$, we conclude that $a(x',y)$ must be the steep arc $j+1 \bmod \, |Q_0|$. Thus $I(j)$ gets sent to $\Sigma P(j+1 \bmod \, |Q_0|)$.  
\end{proof}

\begin{rem} \label{rem: twists proj and inj}
    It follows from Lemma \ref{lem: irreducible maps to shifts} that elementary twists give irreducible morphisms in $\mathcal{C}_Q$ between injectives and shifted projectives. Thus in this case elementary clockwise (counter-clockwise) twists of the inner (outer) boundary component move the corresponding object one place forward in a ray (coray) in $\mathcal{C}_Q$. On the other hand, we see that elementary twists of projectives can give shifted projectives. To make this consistent with the connection between elementary twists and irreducible morphisms, notice that $\Sigma^{-1} I(j-1 \bmod \, |Q_0|)$ is in the same $F$ orbit as $\Sigma P(j-1 \bmod \, |Q_0|)$. With this in mind, we can think of an elementary counter-clockwise twist of the inner boundary component as sending $P(j)$ to $\Sigma^{-1} I(j-1 \bmod \, |Q_0|)$, which is one shift backward in a ray in $\mathscr{D}$. Thus by construction, $P(j)$ gets sent one shift back in a ray to $\Sigma P(j-1 \bmod \, |Q_0|)$.  Analogously, an elementary clockwise twist of the outer boundary component sends $P(j)$ one shift back in a coray to $\Sigma P(j+1 \bmod \, |Q_0|)$.
\end{rem}

Finally, the last case to consider is how the action of elementary twists affects the steep arcs, equivalently, the shifted projectives.

\begin{lem} \label{lem: twists of vertical strands}
    Let $\Sigma P(j)$ be a shifted projective. 
    \begin{enumerate}
        \item An elementary clockwise twist of the inner boundary component sends $\Sigma P(j)$ to either $\Sigma P(j+1 \bmod \, |Q_0|)$ or to $P(j+1 \bmod \, |Q_0|))$.
        \item An elementary counter-clockwise twist of the inner boundary component sends $\Sigma P(j)$ to either $\Sigma P(j-1 \bmod \, |Q_0|)$ or to $I(j-1 \bmod \, |Q_0|))$.
        \item An elementary clockwise twist of the outer boundary component sends $\Sigma P(j)$ to either $\Sigma P(j+1 \bmod \, |Q_0|)$ or to $I(j+1 \bmod \, |Q_0|))$.
        \item An elementary counter-clockwise twist of the outer boundary component sends $\Sigma P(j)$ to either $\Sigma P(j-1 \bmod \, |Q_0|)$ or to $P(j-1 \bmod \, |Q_0|))$.
    \end{enumerate}
\end{lem}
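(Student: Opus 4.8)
The plan is to prove part 1 by a direct geometric analysis in the universal cover and then deduce parts 2--4 by symmetry. Throughout I represent the shifted projective $\Sigma P(j)$ by its steep chord $X_j$, a bridging chord with a top (outer) endpoint and a bottom (inner) endpoint, and I use the description of an elementary clockwise $2\pi/m$ twist of the inner boundary extracted in the proof of Lemma \ref{lem: irreducible maps to shifts}: such a twist fixes every outer endpoint and sends every inner endpoint to its successor. Hence $X_j$ is carried to the bridging chord $X_j'$ with the same top endpoint and with bottom endpoint advanced one inner spacing clockwise, and the entire content of part 1 is to identify $X_j'$.

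First I would record that this twist, call it $\phi$, is a self-homeomorphism of $A_{n,m}$, so it induces a bijection on isotopy classes of arcs; via $\varphi$ this bijection is already understood on every non-steep arc through Lemmas \ref{lem: irreducible maps strings} and \ref{lem: irreducible maps to shifts}. The key local input is that consecutive steep chords $X_j$ and $X_{j+1}$ are two sides of a single triangle of the steep triangulation whose third side is a boundary edge joining two consecutive marked points. This yields exactly two configurations: either $X_j$ and $X_{j+1}$ share their outer endpoint (and their inner endpoints are consecutive), or they share their inner endpoint (and their outer endpoints are consecutive).

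In the shared-outer configuration, advancing the inner endpoint of $X_j$ by one clockwise spacing lands it precisely on the inner endpoint of $X_{j+1}$; since the outer endpoint is common and the labeling convention forces the index to increase, $X_j' = X_{j+1}$ is again steep, giving $\Sigma P(j) \mapsto \Sigma P(j+1 \bmod \, |Q_0|)$. In the shared-inner configuration, advancing the inner endpoint moves it off the shared vertex, so $X_j'$ is non-steep. To name it, I would check that in this configuration the projective $P(j+1 \bmod \, |Q_0|)$ is exactly a module to which Lemma \ref{lem: irreducible maps to shifts}(2) applies, namely one from whose string a hook cannot be deleted at the end; granting this, Lemma \ref{lem: irreducible maps to shifts}(2) says the inverse twist $\phi^{-1}$ (an elementary counter-clockwise inner twist) sends $P(j+1) \mapsto \Sigma P(j)$, and reversing gives $\phi(\Sigma P(j)) = P(j+1 \bmod \, |Q_0|)$. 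Since $\phi$ is a bijection and the two configurations are exhaustive, this settles part 1.

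The main obstacle is precisely the shared-inner case: verifying that the non-steep arc $X_j'$ is the projective $P(j+1)$ rather than some other string module. I would close this by the bookkeeping that matches the local shared-inner configuration at $X_{j+1}$ with the hook condition on $P(j+1)$ --- equivalently, by computing directly that the steep chords crossed by $X_j'$, read left to right, carry exactly the labels forming the string of $P(j+1)$ --- so that Lemma \ref{lem: irreducible maps to shifts}(2) is applicable and reversibility pins down $X_j'$. Finally, parts 2--4 follow by the same argument under the evident symmetries: part 2 is the inverse relationship of part 1 on the inner boundary (using the $I(j)$ statement of Lemma \ref{lem: irreducible maps to shifts}(1) in place of the $P(j)$ statement), while parts 3 and 4 are the outer-boundary analogues, obtained by interchanging the roles of the two boundary components and invoking the outer-twist bullets of Lemmas \ref{lem: irreducible maps strings} and \ref{lem: irreducible maps to shifts} together with the duality between projectives and injectives.
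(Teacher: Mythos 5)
Your proposal is correct, and its handling of the crucial case is genuinely different from the paper's. The dichotomy you set up (whether $X_j$ and $X_{j+1}$ share their outer or their inner endpoint) is exactly the paper's dichotomy between the twist producing a steep arc or a non-steep bridging arc, and in the shared-outer case both proofs identify the image with $X_{j+1}$ directly from its endpoints in the cover. The difference is the shared-inner case: the paper identifies the image module by hand, determining which steep chords the twisted arc crosses and arguing by contradiction that the last crossed chord $k$ has the steep arc $k+1$ counter-clockwise of it, so the arrow between $k+1$ and $k$ points into the string and the crossed labels spell out the string of $P(j+1 \bmod |Q_0|)$. You bypass this crossing analysis entirely by observing that the elementary counter-clockwise inner twist is the inverse homeomorphism, so Lemma \ref{lem: irreducible maps to shifts}(2) (proved before Lemma \ref{lem: twists of vertical strands}, hence no circularity) forces $\Sigma P(j) \mapsto P(j+1 \bmod |Q_0|)$ once its hypothesis is verified. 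That verification, which you leave as ``bookkeeping,'' is routine but should be written out: the shared-inner configuration gives the arrow $j \to j+1$ in $Q$, so the support of $P(j+1 \bmod |Q_0|)$ cannot extend back to $j$ (nor wrap around the cycle, since $Q$ is acyclic), hence its string consists entirely of direct arrows and no hook can be deleted at its end. What your route buys is economy and a structural point the paper leaves implicit, namely that this lemma and Lemma \ref{lem: irreducible maps to shifts} are inverse to one another under the twist; what the paper's route buys is self-containedness, since it exhibits the string of the image module explicitly rather than inferring it from invertibility. Your reduction of parts 2--4 to part 1 by symmetry and projective--injective duality matches the paper's treatment.
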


\begin{proof}
    We will prove 1. The proof of 2 is analogous to 1 and 3 and 4 are dual. Consider the object $\Sigma P(j)$ which corresponds to a steep arc labeled $j$. If an elementary clockwise twist of the inner boundary component sends the steep arc to another steep arc, then the new steep arc must have label $j+1 \bmod \, |Q_0|$ from our labeling convention. Moreover, the new steep arc must be counter-clockwise of the steep arc labeled $j$. We conclude that there is an arrow in $\mathcal{C}_Q$ from $\Sigma P(j)$ to $\Sigma P(j+1 \bmod \, |Q_0|)$, so this elementary twist corresponds to moving $\Sigma P(j)$ one position forward in a ray in $\mathcal{C}_Q$. 

    On the other hand, suppose that an elementary twist of the steep arc $j$ gives a non-steep bridging arc. Then this arc must cross all steep arcs that are clockwise of $j$. The first steep arc the twist of $j$ crosses is labeled $j+1 \bmod \, |Q_0|$. Suppose that the last steep arc that the twist of $j$ crosses is labeled $k = a(x,y)$. Consider the steep arc labeled $k+1\bmod |Q_0|$ and for a contradiction, suppose this arc is clockwise of $k$. Then it must be of the form $a(x,y')$ with $y'$ the successor of $y$ on the outer boundary component or $a(x',y)$ with $x$ the successor of $x'$ on the inner boundary component. In the first case, this contradicts the fact that $k$ is the last steep arc crossed, and in the second case, $a(x',y)$ would cross the steep arc labeled $j$ contradicting the fact that we began with a triangulation. We conclude that the steep arc labeled $k+1\bmod |Q_0|$ must be counter-clockwise of $k$ and therefore, there is an arrow in $Q$ pointing from $k+1$ to $k$. We conclude that the module associated to the twist of $j$ is precisely the projective $P(j+1 \bmod \, |Q_0|)$.
\end{proof}

\begin{rem}\label{rem: twists shifts}
    In light of Lemma \ref{lem: twists of vertical strands}, we see that in the case of steep arcs, elementary clockwise twists of the inner (outer) boundary component send shifted projectives one position forward in a ray (backward in a coray) in $\mathcal{C}_Q$. Analagously, elementary counter-clockwise twists of the inner (outer) boundary component send shifted projectives one position backward in a ray (forward in a coray) in $\mathcal{C}_Q$. 
\end{rem}

Combining Remarks \ref{rem: twists non proj non inj}, \ref{rem: twists proj and inj}, and \ref{rem: twists shifts}, we have the following proposition.

\begin{prop}\label{prop: twists in cluster cat}
    Let $M \in \text{Ob}(\mathcal{C}_Q)$ that is not regular and let $\alpha$ be its corresponding arc on the annulus $A_{Q}$. We have the following:

    \begin{itemize}
        \item A clockwise (counter-clockwise) elementary Dehn twist of the inner boundary component sends $M$ one position forward (backward) in a ray in $C_Q$.
        \item A clockwise (counter-clockwise) elementary Dehn twist of the outer boundary component sends $M$ one position forward (backward) in a coray in $C_Q$. \qed
    \end{itemize}
\end{prop}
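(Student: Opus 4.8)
The plan is to prove Proposition~\ref{prop: twists in cluster cat} by an exhaustive case analysis on the position of $M$ in the Auslander--Reiten quiver of $\mathcal{C}_Q$, assembling the three preceding remarks. Since $M$ is not regular, the corresponding indecomposable lies in $\mathcal{P}\cup\mathcal{I}$, or else $M$ is a shifted projective $\Sigma P(j)$. I would therefore split into the cases: (i) $M$ preprojective and not projective; (ii) $M$ preinjective and not injective; (iii) $M=P(j)$ projective; (iv) $M=I(j)$ injective; and (v) $M=\Sigma P(j)$ a shifted projective. Each elementary twist, by the lemmas in this section, realizes a single irreducible morphism in $\mathcal{C}_Q$, and the content of the proposition is precisely that such a morphism is one step along a ray or coray in the sense of the definition preceding it.

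For the interior cases (i) and (ii), Lemma~\ref{lem: irreducible maps strings} identifies each elementary twist with adding or deleting a hook (in the preprojective component) or a cohook (in the preinjective component). By the construction of the AR quiver of $\mathcal{C}_Q$ given in Example~\ref{exmp: cluster category}, these operations are exactly the positive- and negative-slope irreducible morphisms, so Remark~\ref{rem: twists non proj non inj} records them as single forward or backward steps along a ray (for inner twists) or a coray (for outer twists). Cases (iii) and (iv), in the sub-case that a hook or cohook can still be deleted, reduce verbatim to (i) and (ii), as noted in the paragraph preceding Lemma~\ref{lem: irreducible maps to shifts}; so it remains only to treat the sub-case in which no hook or cohook can be added or deleted, together with case (v).

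These remaining boundary cases are handled by Lemma~\ref{lem: irreducible maps to shifts} (sending such an $I(j)$ or $P(j)$ to a shifted projective, recorded in Remark~\ref{rem: twists proj and inj}) and Lemma~\ref{lem: twists of vertical strands} (sending $\Sigma P(j)$ either to an adjacent shifted projective or to a projective/injective, recorded in Remark~\ref{rem: twists shifts}). The main obstacle, and where the care lies, is verifying that ``one position forward/backward in a ray/coray'' is consistent across the seam where ordinary modules pass to shifted projectives. Here I would lean on the $F$-orbit identification $\Sigma^{-1}I(j)\sim\Sigma P(j)$ used to glue the preinjective component onto the shifted projectives---the M\"obius-type gluing described earlier---and check that the slope of the relevant arrow in the AR quiver of $\mathscr{D}$, read through this identification, agrees with the prescribed ray/coray direction in each of the four twist types. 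Once these transition arrows are matched, combining Remarks~\ref{rem: twists non proj non inj}, \ref{rem: twists proj and inj}, and \ref{rem: twists shifts} covers every non-regular $M$ and yields the proposition.
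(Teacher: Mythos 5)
Your proposal is correct and follows essentially the same route as the paper: the paper's proof is precisely to combine Remarks \ref{rem: twists non proj non inj}, \ref{rem: twists proj and inj}, and \ref{rem: twists shifts}, which record the content of Lemmas \ref{lem: irreducible maps strings}, \ref{lem: irreducible maps to shifts}, and \ref{lem: twists of vertical strands} in exactly the case division you describe. Your additional attention to the seam consistency via the identification $\Sigma^{-1}I(j)\sim\Sigma P(j)$ is the same point the paper's remarks already handle.
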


We are now ready to provide a description of inner equivalence classes of clusters in terms of $\mathcal{C}_Q$. The proof of the following theorem follows from Proposition \ref{prop: twists in cluster cat}

\begin{thm} \label{thm: families in cluster category}
    Let $C$ and $C'$ be two clusters in $\mathcal{C}_Q$ and label their corresponding triangulations of the annulus $A_Q = A_{n,m}$ by $T$ and $T'$ respectively. Suppose that for $z\in\mathbb{N}$, the triangulation $T'$ is attained from $T$ by performing $z$ $2\pi$ clockwise Dehn twists of the inner boundary component of the annulus. Then the cluster $C'$ has all the same regular objects as $C$ and each non-regular object $M'$ in $C'$ is attained from the non-regular object $M$ of $C$ by moving $M$  $z\cdot m$ positions forward in any ray containing $M$. \qed
\end{thm}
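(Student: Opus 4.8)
The plan is to reduce everything to the elementary ($2\pi/m$) twists already analysed in Section~\ref{sec: algebraic description of families}, and then to treat the regular and non-regular summands of a cluster separately, exploiting that a Dehn twist is a homeomorphism of $A_{n,m}$ and so carries the triangulation $T$ onto $T'$ arc by arc. Since a homeomorphism preserves whether an arc is bridging or peripheral (and whether it is a closed loop), it respects the dichotomy between transjective (non-regular) and regular indecomposables; hence the non-regular summands of $C'$ are precisely the twists of the non-regular summands of $C$, and likewise for the regular summands. The first step is the numerical reduction: by definition an elementary clockwise twist of the inner boundary is a $2\pi/m$ twist, so a single $2\pi$ clockwise inner twist is the composite of $m$ elementary clockwise inner twists, and the hypothesis of $z$ full twists amounts to applying $zm$ elementary clockwise inner twists.

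For the non-regular summands I would iterate Proposition~\ref{prop: twists in cluster cat}. Let $\phi$ denote the elementary clockwise inner twist, viewed as a fixed homeomorphism of $A_{n,m}$; under the arc--module dictionary the passage from $T$ to $T'$ is $\phi^{zm}$. Fix a non-regular summand $M$ of $C$. By Proposition~\ref{prop: twists in cluster cat}, for each $k$ the pair $(\phi^{k}M,\phi^{k+1}M)$ satisfies the slope condition defining a forward ray-step (and, using the counter-clockwise statement, $(\phi^{k-1}M,\phi^{k}M)$ as well). Since a ray is by definition an infinite path all of whose consecutive pairs obey these slope conditions, the trajectory $(\phi^{k}M)_{k\in\mathbb{Z}}$ is itself a ray, and $\phi^{zm}M$ is the object $zm$ positions forward along it. Moreover the prescribed slope at any non-regular object is dictated solely by which of the preinjective, shifted-projective, or preprojective regions it lies in, so the forward successor is uniquely determined and every ray containing $M$ shares this forward part; this legitimizes the phrase ``any ray containing $M$.'' The transitions between these regions that occur along the way are exactly the cases validated by Lemmas~\ref{lem: irreducible maps to shifts} and~\ref{lem: twists of vertical strands}, so no step is lost.

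It remains to show that every regular summand of $C$ is fixed by a full $2\pi$ inner twist, which gives the equality of regular parts. Model the inner twist as a Dehn twist along a core curve supported in a collar of the inner boundary. A homogeneous summand is a band, i.e.\ a closed curve isotopic to the core, and is therefore fixed. A right-regular summand is a peripheral arc with both endpoints on the outer boundary; it can be isotoped into a neighborhood of the outer boundary disjoint from the support of the twist, and is thus fixed as well. A left-regular summand is a peripheral arc on the inner boundary; by the remark recording the action of elementary twists on regular modules, an elementary clockwise inner twist acts on such modules as $\tau^{-1}$. Because the left tube has rank $m$ --- its quasi-simples forming a single $\tau$-orbit in bijection with the $m$ marked points of the inner boundary --- a full $2\pi$ inner twist acts as $\tau^{-m}=\mathrm{id}$ and fixes every left-regular module. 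Hence all regular summands of $C$ coincide with those of $C'$.

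The main obstacle is the regular case. The non-regular statement is essentially a formal iteration of Proposition~\ref{prop: twists in cluster cat}, but the invariance of the regular summands rests on two geometric inputs that must be justified carefully: that the left (inner) tube has rank exactly $m$, so that $m$ elementary twists complete a full $\tau$-period, and that right-regular and homogeneous arcs can be displaced off the support of the inner twist. With these in place, assembling the numerical reduction ($z$ full twists equal $zm$ elementary twists), the ray description of the non-regular summands, and the fixedness of the regular summands proves the theorem, since every summand of $C'$ is the $\phi^{zm}$-image of the corresponding summand of $C$.
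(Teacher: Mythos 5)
Your proposal is correct and takes essentially the same route as the paper: the paper deduces the theorem directly from Proposition \ref{prop: twists in cluster cat} by regarding each $2\pi$ clockwise inner twist as $m$ elementary twists, which is precisely your argument for the non-regular summands. Your explicit verification that the regular summands are unchanged (isotoping outer-boundary arcs off the support of the twist, and using that the inner tube has rank $m$ so a full twist acts as $\tau^{-m}=\mathrm{id}$) fills in a step the paper leaves implicit, but it is the same argument in substance rather than a different approach.
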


\begin{exmp}
Consider the quiver $Q$ from Example \ref{exmp: cluster category} \\ \smallskip $$\xymatrix{1 \ar[r] \ar@/^1.5pc/[rrr] & 2 & 3 \ar[l] \ar[r] & 4}$$
Then a triangulation of $A_Q$ by steep arcs that gives this orientation is 
\begin{center}
\tikzset{every picture/.style={line width=0.75pt}} %set default line width to 0.75pt        
\begin{tikzpicture}[x=0.75pt,y=0.75pt,yscale=-1,xscale=1]
%uncomment if require: \path (0,186); %set diagram left start at 0, and has height of 186

%Shape: Donut [id:dp13796280114344128] 
\draw   (303.38,83.7) .. controls (303.38,72.14) and (313.16,62.77) .. (325.22,62.77) .. controls (337.28,62.77) and (347.06,72.14) .. (347.06,83.7) .. controls (347.06,95.25) and (337.28,104.62) .. (325.22,104.62) .. controls (313.16,104.62) and (303.38,95.25) .. (303.38,83.7)(272,83.7) .. controls (272,54.81) and (295.83,31.39) .. (325.22,31.39) .. controls (354.61,31.39) and (378.44,54.81) .. (378.44,83.7) .. controls (378.44,112.58) and (354.61,136) .. (325.22,136) .. controls (295.83,136) and (272,112.58) .. (272,83.7) ;
%Straight Lines [id:da14124755335853045] 
\draw    (325.22,62.77) ;
\draw [shift={(325.22,62.77)}, rotate = 0] [color={rgb, 255:red, 0; green, 0; blue, 0 }  ][fill={rgb, 255:red, 0; green, 0; blue, 0 }  ][line width=0.75]      (0, 0) circle [x radius= 3.35, y radius= 3.35]   ;
%Straight Lines [id:da591668315258826] 
\draw    (325.22,104.62) ;
\draw [shift={(325.22,104.62)}, rotate = 0] [color={rgb, 255:red, 0; green, 0; blue, 0 }  ][fill={rgb, 255:red, 0; green, 0; blue, 0 }  ][line width=0.75]      (0, 0) circle [x radius= 3.35, y radius= 3.35]   ;
%Straight Lines [id:da6488632062771571] 
\draw    (378.44,83.7) ;
\draw [shift={(378.44,83.7)}, rotate = 0] [color={rgb, 255:red, 0; green, 0; blue, 0 }  ][fill={rgb, 255:red, 0; green, 0; blue, 0 }  ][line width=0.75]      (0, 0) circle [x radius= 3.35, y radius= 3.35]   ;
%Straight Lines [id:da9223337012561086] 
\draw    (272,83.7) ;
\draw [shift={(272,83.7)}, rotate = 0] [color={rgb, 255:red, 0; green, 0; blue, 0 }  ][fill={rgb, 255:red, 0; green, 0; blue, 0 }  ][line width=0.75]      (0, 0) circle [x radius= 3.35, y radius= 3.35]   ;
%Curve Lines [id:da8451369490342808] 
\draw [color={rgb, 255:red, 0; green, 0; blue, 0 }  ,draw opacity=1 ]   (272,83.7) .. controls (288.44,61.39) and (305.44,53.39) .. (325.22,62.77) ;
%Curve Lines [id:da7065623405609751] 
\draw [color={rgb, 255:red, 0; green, 0; blue, 0 }  ,draw opacity=1 ]   (272,83.7) .. controls (290.44,104.39) and (308.44,110.39) .. (325.22,104.62) ;
%Curve Lines [id:da7100024043923119] 
\draw [color={rgb, 255:red, 0; green, 0; blue, 0 }  ,draw opacity=1 ]   (325.22,62.77) .. controls (346.44,48.39) and (368.44,66.39) .. (378.44,83.7) ;
%Curve Lines [id:da6787330717243187] 
\draw [color={rgb, 255:red, 0; green, 0; blue, 0 }  ,draw opacity=1 ]   (378.44,83.7) .. controls (368.44,96.18) and (355.44,115.18) .. (325.22,104.62) ;

% Text Node
\draw (291,43.4) node [anchor=north west][inner sep=0.75pt]    {$1$};
% Text Node
\draw (337,39.4) node [anchor=north west][inner sep=0.75pt]    {$2$};
% Text Node
\draw (344,108.4) node [anchor=north west][inner sep=0.75pt]    {$3$};
% Text Node
\draw (295,108.4) node [anchor=north west][inner sep=0.75pt]    {$4$};

\end{tikzpicture}
\end{center}

Consider the following triangulations that are inner equivalent.
\begin{center}
\tikzset{every picture/.style={line width=0.75pt}} %set default line width to 0.75pt        
\begin{tikzpicture}[x=0.75pt,y=0.75pt,yscale=-1,xscale=1]
%uncomment if require: \path (0,186); %set diagram left start at 0, and has height of 186

%Shape: Donut [id:dp871817421970507] 
\draw   (184.38,88.7) .. controls (184.38,77.14) and (194.16,67.77) .. (206.22,67.77) .. controls (218.28,67.77) and (228.06,77.14) .. (228.06,88.7) .. controls (228.06,100.25) and (218.28,109.62) .. (206.22,109.62) .. controls (194.16,109.62) and (184.38,100.25) .. (184.38,88.7)(153,88.7) .. controls (153,59.81) and (176.83,36.39) .. (206.22,36.39) .. controls (235.61,36.39) and (259.44,59.81) .. (259.44,88.7) .. controls (259.44,117.58) and (235.61,141) .. (206.22,141) .. controls (176.83,141) and (153,117.58) .. (153,88.7) ;
%Straight Lines [id:da7335276853499757] 
\draw    (206.22,67.77) ;
\draw [shift={(206.22,67.77)}, rotate = 0] [color={rgb, 255:red, 0; green, 0; blue, 0 }  ][fill={rgb, 255:red, 0; green, 0; blue, 0 }  ][line width=0.75]      (0, 0) circle [x radius= 3.35, y radius= 3.35]   ;
%Straight Lines [id:da007302886189649138] 
\draw    (206.22,109.62) ;
\draw [shift={(206.22,109.62)}, rotate = 0] [color={rgb, 255:red, 0; green, 0; blue, 0 }  ][fill={rgb, 255:red, 0; green, 0; blue, 0 }  ][line width=0.75]      (0, 0) circle [x radius= 3.35, y radius= 3.35]   ;
%Straight Lines [id:da29886222481836855] 
\draw    (259.44,88.7) ;
\draw [shift={(259.44,88.7)}, rotate = 0] [color={rgb, 255:red, 0; green, 0; blue, 0 }  ][fill={rgb, 255:red, 0; green, 0; blue, 0 }  ][line width=0.75]      (0, 0) circle [x radius= 3.35, y radius= 3.35]   ;
%Straight Lines [id:da8110628703594085] 
\draw    (153,88.7) ;
\draw [shift={(153,88.7)}, rotate = 0] [color={rgb, 255:red, 0; green, 0; blue, 0 }  ][fill={rgb, 255:red, 0; green, 0; blue, 0 }  ][line width=0.75]      (0, 0) circle [x radius= 3.35, y radius= 3.35]   ;
%Shape: Donut [id:dp018991183314572613] 
\draw   (414.38,88.7) .. controls (414.38,77.14) and (424.16,67.77) .. (436.22,67.77) .. controls (448.28,67.77) and (458.06,77.14) .. (458.06,88.7) .. controls (458.06,100.25) and (448.28,109.62) .. (436.22,109.62) .. controls (424.16,109.62) and (414.38,100.25) .. (414.38,88.7)(383,88.7) .. controls (383,59.81) and (406.83,36.39) .. (436.22,36.39) .. controls (465.61,36.39) and (489.44,59.81) .. (489.44,88.7) .. controls (489.44,117.58) and (465.61,141) .. (436.22,141) .. controls (406.83,141) and (383,117.58) .. (383,88.7) ;
%Straight Lines [id:da4042950454528935] 
\draw    (436.22,67.77) ;
\draw [shift={(436.22,67.77)}, rotate = 0] [color={rgb, 255:red, 0; green, 0; blue, 0 }  ][fill={rgb, 255:red, 0; green, 0; blue, 0 }  ][line width=0.75]      (0, 0) circle [x radius= 3.35, y radius= 3.35]   ;
%Straight Lines [id:da7870160883368535] 
\draw    (436.22,109.62) ;
\draw [shift={(436.22,109.62)}, rotate = 0] [color={rgb, 255:red, 0; green, 0; blue, 0 }  ][fill={rgb, 255:red, 0; green, 0; blue, 0 }  ][line width=0.75]      (0, 0) circle [x radius= 3.35, y radius= 3.35]   ;
%Straight Lines [id:da19146667782572258] 
\draw    (489.44,88.7) ;
\draw [shift={(489.44,88.7)}, rotate = 0] [color={rgb, 255:red, 0; green, 0; blue, 0 }  ][fill={rgb, 255:red, 0; green, 0; blue, 0 }  ][line width=0.75]      (0, 0) circle [x radius= 3.35, y radius= 3.35]   ;
%Straight Lines [id:da2912331460759847] 
\draw    (383,88.7) ;
\draw [shift={(383,88.7)}, rotate = 0] [color={rgb, 255:red, 0; green, 0; blue, 0 }  ][fill={rgb, 255:red, 0; green, 0; blue, 0 }  ][line width=0.75]      (0, 0) circle [x radius= 3.35, y radius= 3.35]   ;
%Straight Lines [id:da19529418574650226] 
\draw    (277.44,89.39) -- (367.44,89.39) ;
\draw [shift={(369.44,89.39)}, rotate = 180] [color={rgb, 255:red, 0; green, 0; blue, 0 }  ][line width=0.75]    (10.93,-3.29) .. controls (6.95,-1.4) and (3.31,-0.3) .. (0,0) .. controls (3.31,0.3) and (6.95,1.4) .. (10.93,3.29)   ;
\draw [shift={(275.44,89.39)}, rotate = 0] [color={rgb, 255:red, 0; green, 0; blue, 0 }  ][line width=0.75]    (10.93,-3.29) .. controls (6.95,-1.4) and (3.31,-0.3) .. (0,0) .. controls (3.31,0.3) and (6.95,1.4) .. (10.93,3.29)   ;
%Curve Lines [id:da16815563195539185] 
\draw [color={rgb, 255:red, 248; green, 231; blue, 28 }  ,draw opacity=1 ]   (153,88.7) .. controls (169.44,66.39) and (186.44,58.39) .. (206.22,67.77) ;
%Curve Lines [id:da655574689223031] 
\draw [color={rgb, 255:red, 208; green, 2; blue, 27 }  ,draw opacity=1 ]   (153,88.7) .. controls (171.44,109.39) and (189.44,115.39) .. (206.22,109.62) ;
%Curve Lines [id:da8776088888073372] 
\draw [color={rgb, 255:red, 0; green, 255; blue, 0 }  ,draw opacity=1 ]   (206.22,67.77) .. controls (227.44,53.39) and (249.44,71.39) .. (259.44,88.7) ;
%Curve Lines [id:da1227141685394546] 
\draw [color={rgb, 255:red, 22; green, 48; blue, 226 }  ,draw opacity=1 ]   (206.22,67.77) .. controls (287.44,59.39) and (207.44,184.39) .. (153,88.7) ;
%Curve Lines [id:da42096740956609] 
\draw [color={rgb, 255:red, 22; green, 48; blue, 226 }  ,draw opacity=1 ]   (383,88.7) .. controls (399.44,66.39) and (416.44,58.39) .. (436.22,67.77) ;
%Curve Lines [id:da61126100121102] 
\draw [color={rgb, 255:red, 208; green, 2; blue, 27 }  ,draw opacity=1 ]   (383,88.7) .. controls (441.44,-6.61) and (519.44,121.39) .. (436.22,109.62) ;
%Curve Lines [id:da8737840555881216] 
\draw [color={rgb, 255:red, 248; green, 231; blue, 28 }  ,draw opacity=1 ]   (383,88.7) .. controls (394.38,45.25) and (459.44,23.39) .. (475.44,71.39) .. controls (491.44,119.39) and (444.44,132.39) .. (423.44,125.39) .. controls (402.44,118.39) and (374.44,70.39) .. (436.22,67.77) ;
%Curve Lines [id:da6169810907039242] 
\draw [color={rgb, 255:red, 0; green, 255; blue, 0 }  ,draw opacity=1 ]   (436.22,67.77) .. controls (329.44,59.39) and (434.44,207.39) .. (489.44,88.7) ;

\end{tikzpicture}
\end{center}

The triangulation on the left corresponds to the cluster $$(I(3) = S(3)) \oplus \Sigma P(1) \oplus \Sigma P(2) \oplus \Sigma P(4)$$ and the one on the right corresponds to $$(P(2) = S(2)) \oplus (24_3 = P(3)) \oplus (P(4) = S_4) \oplus \Sigma P(1).$$ Below is the first cluster depicted in the cluster category. 
\begin{center}
\resizebox{15cm}{!}{
$$\xymatrix{ 
     & & & & & & \ar[dr] & & & & \\ 
     & \Sigma^{-1} I(1) \ar[dr] & & 42_3 \ar[dr] \ar[ur] & & & & 13_3 \ar[dr] & &  {\color{green}\Sigma P(2)} \ar[dr] & \\
     \Sigma^{-1} I(2) \ar[ur] \ar[dr] & & 22_1 \ar[ur] \ar[dr] & & & & \ar[ur] \ar[dr] & & 11_1 \ar[ur] \ar[dr] & & {\color{yellow}\Sigma P(1)} \\
     & \Sigma^{-1} I(3) \ar[ur] \ar[dr] & & 24_3 \ar[ur] \ar[dr] & & \cdots & & 31_3 \ar[ur] \ar[dr] & & {\color{red}\Sigma P(4)} \ar[ur] \ar[dr] & \\
     \Sigma^{-1} I(4) \ar[ur] \ar[dr] & & 44_1 \ar[ur] \ar[dr] & & & & \ar[ur] \ar[dr] & & {\color{blue}33_1} \ar[ur] \ar[dr] & & \Sigma P(3) \\ 
     & \Sigma^{-1} I(1) \ar[ur] & & 42_3 \ar[dr] \ar[ur] & & & & 13_3 \ar[ur] & & {\color{green}\Sigma P(2)} \ar[ur] & \\
     & & & & & & \ar[ur] & & &}$$}
\end{center}
To attain the second cluster, we move each object forward two positions in their corresponding ray as follows.

\begin{center}
\resizebox{15cm}{!}{
$$\xymatrix{ 
     & & & & & & \ar[dr] & & & & \\ 
     & \Sigma^{-1} I(1) \ar[dr] & & 42_3 \ar[dr] \ar[ur] & & & & 13_3 \ar[dr] & &  \Sigma P(2) \ar[dr] & \\
     \Sigma^{-1} I(2) \ar[ur] \ar[dr] & & {\color{red} 22_1} \ar[ur] \ar[dr] & & & & \ar[ur] \ar[dr] & & 11_1 \ar[ur] \ar[dr] & & {\color{blue}\Sigma P(1)} \\
     & \Sigma^{-1} I(3) \ar[ur] \ar[dr] & & {\color{yellow} 24_3} \ar[ur] \ar[dr] & & \cdots & & 31_3 \ar[ur] \ar[dr] & & \Sigma P(4) \ar[ur] \ar[dr] & \\
     \Sigma^{-1} I(4) \ar[ur] \ar[dr] & & {\color{green} 44_1} \ar[ur] \ar[dr] & & & & \ar[ur] \ar[dr] & & 33_1 \ar[ur] \ar[dr] & & \Sigma P(3) \\ 
     & \Sigma^{-1} I(1) \ar[ur] & & 42_3 \ar[dr] \ar[ur] & & & & 13_3 \ar[ur] & & \Sigma P(2) \ar[ur] & \\
     & & & & & & \ar[ur] & & &}$$
    }
    \end{center}
\end{exmp}

\section{Examples and table of counts of families} \label{sec: examples}

We begin this section with Example \ref{exmp: list of values}, which is a table providing a count of the number of families of triangulations of the annulus $A_{n,m}$ for a few small values of $n$ and $m$. 

\begin{exmp}\label{exmp: list of values}

%\caption{Some evaluations of $n\sum_{i=0}^{m-1}(i+1)C(i)C(n+m-i-1)$}
\[% [inline block 1: 2 envs, 45584 chars -> data_tex | \begin{array}{c|cccccc} \tikz{\node[below left, inner sep=1pt] (m) {m};%...]

    %\caption{Complete depiction of our counting method for $A_{2,3}$}
\end{center}
\end{exmp}

Finally, we end this section with Example \ref{exmp: all inner equivalence classes} which gives a complete list of representatives of inner equivalence classes of triangulations for $A_{2,2}$.

\begin{exmp}\textbf{}\label{exmp: all inner equivalence classes}
    
\begin{center}
\tikzset{every picture/.style={line width=0.75pt}} %set default line width to 0.75pt        

\tikzset{every picture/.style={line width=0.75pt}} %set default line width to 0.75pt        

\tikzset{every picture/.style={line width=0.75pt}} %set default line width to 0.75pt        

% [inline block 2: 1 envs, 62942 chars -> data_tex | \begin{tikzpicture}[x=0.75pt,y=0.75pt,yscale=-1,xscale=1] %uncomment if require: \path (0,753); %set diagram left start ...]


    %\caption{All inner equivalence classes of triangulations of the annulus $A_{2,2}$}
\end{center}

\end{exmp}

\eject

\nocite{*}
\bibliographystyle{amsplain}
\bibliography{bibliography}

\end{document}